\documentclass[10pt]{amsart}

\usepackage[T1]{fontenc}
\usepackage[utf8]{inputenc}
\usepackage[english]{babel}
\usepackage{lmodern}

\usepackage[a4paper,margin=1.15in]{geometry}

\usepackage{xcolor}
\definecolor{DarkRed}{RGB}{173,0,0}
\definecolor{LightRed}{RGB}{201,0,0}

\usepackage{amsmath,amssymb,amsfonts,amsthm,mathtools}

\numberwithin{equation}{section}

\usepackage{tikz}
\usetikzlibrary{arrows,cd}
\DeclareMathOperator{\Hom}{Hom}
\DeclareMathOperator{\rk}{rk}

\DeclareMathOperator{\NE}{\overline{\mathrm{NE}}}
\DeclareMathOperator{\Sym}{Sym}

\DeclareMathOperator{\Hol}{Hol}

\DeclareMathOperator{\locus}{locus}
\DeclareMathOperator{\Supp}{Supp}

\DeclareMathOperator{\Aut}{Aut}

\newcommand{\Proj}{\operatorname{Proj}}
\newcommand{\cG}{\mathcal{G}}
\DeclareMathOperator{\Ext}{Ext}
\newcommand{\Z}{\mathbb{Z}}
\newcommand{\C}{\mathbb{C}}
\newcommand{\A}{\mathbb{A}}
\newcommand{\PP}{\mathbb{P}}
\newcommand{\OO}{\mathcal{O}}
\newcommand{\Om}{\Omega}

\theoremstyle{plain}
\newtheorem{theorem}[equation]{Theorem}
\newtheorem{lemma}[equation]{Lemma}
\newtheorem{proposition}[equation]{Proposition}
\newtheorem{corollary}[equation]{Corollary}

\theoremstyle{definition}
\newtheorem{definition}[equation]{Definition}

\theoremstyle{remark}
\newtheorem{remark}[equation]{Remark}
\newtheorem{example}[equation]{Example}

\usepackage[
  colorlinks=true,
  linkcolor=DarkRed,
  urlcolor=LightRed,
  citecolor=LightRed
]{hyperref}

\title{Log-Conformal Projective Manifolds}
\author[Maur\'icio Corr\^ea]{Maur\'icio Corr\^ea}
\address[Maur\'icio Corr\^ea]{Dipartimento di Matematica, Universit\`a degli Studi di Bari, Via E.~Orabona 4, I-70125 Bari, Italy}
\email{mauricio.barros@uniba.it}

\author[Alex Massarenti]{Alex Massarenti}
\address[Alex Massarenti]{Dipartimento di Matematica e Informatica, Universit\`a di Ferrara, Via Machiavelli 30, 44121 Ferrara, Italy}
\email{msslxa@unife.it}

\date{\today}

\subjclass[2020]{Primary 14E30, 53A30; Secondary 14J45, 32Q30.}

\begin{document}

\begin{abstract}
Let $(X,\Delta)$ be a smooth complex projective simple normal crossing pair of dimension $n\ge 3$ endowed with an everywhere nondegenerate logarithmic conformal tensor. If $K_X+\Delta$ is not nef, then exactly one of the following occurs: $\Delta=\varnothing$ and $X\simeq Q^n$; $X\simeq\PP^n$ and $\Delta$ is a hyperplane; or $n=2m$ and $(X,\Delta)$ admits a $(K_X+\Delta)$-negative elementary contraction $\phi:X\to Y$ generated by minimal rational curves of conformal degree one. In the third case, if $\dim Y>0$, then for some $1\le s\le m$ one has $\dim Y=m-s+1$ and the geometric generic fiber is $(\PP^{m+s-1},H_1+\cdots+H_s)$, where the $H_i$ are hyperplanes in general position; over a dense open subset, $\phi$ is a projective bundle and the horizontal boundary is the sum of $s$ relative hyperplanes. The case $s=1$ yields a rational maximal isotropic fibration with generic fiber $(\PP^m,H)$; if $\dim Y=0$, then $\rho(X)=1$ and $-(K_X+\Delta)$ is ample. If $K_X+\Delta\equiv0$, then, under a Bochner extension principle, a restricted-holonomy condition, and trivial monodromy of the induced flat connection on $X\setminus\Delta$, a logarithmic conformal tensor with trivial conformal line bundle forces $X\setminus\Delta$ to be semi-abelian and $(X,\Delta)$ to be its toroidal compactification.
\end{abstract}

\maketitle
\setcounter{tocdepth}{1}
\tableofcontents

\section{Introduction}

Holomorphic conformal geometry lies at the intersection of locally homogeneous geometric structures, uniformization by Hermitian symmetric domains, the birational geometry of projective manifolds, and the twistorial study of null directions and their incidence relations. In the foundational work of Kobayashi and Ochiai \cite{KO82}, holomorphic conformal structures are treated as geometric structures modeled on the hyperquadric, leading to strong rigidity and uniformization results. From a different direction, the Penrose program \cite{Penrose67} and its mathematical development by Atiyah--Hitchin--Singer \cite{AHS78} and LeBrun \cite{LeBrun83} emphasize that conformal geometry is governed not by a distinguished metric, but by its null cones and null curves. In LeBrun's complex-geometric reformulation \cite{LeBrun83}, the space of null geodesics becomes an incidence space closely related to contact geometry, from which the underlying manifold may, in principle, be recovered. In the projective setting, Ye \cite{Ye94} gave an algebro-geometric formulation of this null geometry inside $\PP(TX)$: the divisor of null directions, the natural contact structure, and suitable families of rational curves provide the language in which conformal geometry can be studied birationally. In this setting, spaces of rational curves and their deformation theory are the natural moduli spaces.

The logarithmic setting extends these ideas. Already in dimension two, Kobayashi and Naruki \cite{KN88} showed that everywhere regular conformal structures can be replaced by generalized conformal tensors with controlled logarithmic behaviour along a boundary divisor, thereby including orbifold singularities, Hilbert modular cusps, and ramified coverings. More recently, Langer \cite{Langer24} showed that logarithmic pairs also carry natural contact-geometric structures, suggesting a logarithmic extension of the classical conformal, contact, and twistorial pictures. The objects considered here are smooth complex projective simple normal crossing pairs $(X,\Delta)$ endowed with an everywhere nondegenerate logarithmic conformal tensor on $T_X(-\log \Delta)$. The central question is how much of the rigidity of the classical theory persists in the presence of the boundary and which logarithmic phenomena may occur. In the numerically trivial case, this question is also related to the recent analysis of holomorphic tensors on log Calabi--Yau manifolds by Collins and Guenancia \cite{CollinsGuenancia25}.

From the birational standpoint, the logarithmic setting is equally natural. Since singular models are unavoidable in the minimal model program \cite{KM98}, it is natural to consider a conformal structure on a log resolution and ask whether it descends, after contracting the boundary in codimension at least two, to a reflexive conformal structure on the singular model. Thus logarithmic conformal geometry relates smooth pairs with boundary to compact singular models. The final part of the article treats boundary contractions and the resulting reflexive conformal structures.

Let $(X,\Delta)$ be a smooth complex projective simple normal crossing pair of dimension $n\ge 3$, and let $g\in H^0\bigl(X,\Sym^2\Omega_X^1(\log\Delta)\otimes N\bigr)$
be a fiberwise nondegenerate logarithmic conformal tensor, where $N$ is a line bundle on $X$.
The first result concerns the non-nef case. Besides the smooth quadric and projective space with hyperplane boundary, there is an even-dimensional alternative whose positive-dimensional extremal contractions have projective-space fibers with a simple normal crossing hyperplane arrangement. The one-hyperplane case is a rational maximal isotropic fibration.

\begin{theorem}
Let $(X,\Delta)$ be a smooth complex projective simple normal crossing pair of dimension $n\ge 3$ carrying an everywhere nondegenerate log-conformal tensor $(N,g)$. Assume that $K_X+\Delta$ is not nef. Then one of the following mutually exclusive alternatives occurs:
\begin{enumerate}
    \item $\Delta=\varnothing$ and $X\simeq Q^n$;
    \item $X\simeq \PP^n$ and $\Delta$ is a hyperplane;
    \item $n=2m$ is even and there is a $(K_X+\Delta)$-negative elementary contraction $\phi_R:X\to Y$, generated by a minimal rational curve $C$ with $N\cdot C=1$, for which one of the following holds:
    \begin{enumerate}
        \item $\dim Y>0$, and for some $1\le s\le m$, $\dim Y=m-s+1$ and the general fiber is $(F,\Delta|_F)\simeq (\PP^{m+s-1},H_1+\cdots+H_s),$
        where the $H_i$ are hyperplanes in general position and $N|_F\simeq\OO_{\PP^{m+s-1}}(1)$. After shrinking the base, $\phi_R$ is a projective bundle and the horizontal boundary is the sum of $s$ relative hyperplanes. If $s=1$, this is the rational maximal isotropic fibration described by an exact sequence $0\to\OO_{Y^\circ}\to V\to T_{Y^\circ}\otimes L^{-1}\to0$;
        \item $Y$ is a point. Then $\rho(X)=1$, $-(K_X+\Delta)\equiv mN$, and $-(K_X+\Delta)$ is ample.
    \end{enumerate}
\end{enumerate}
\end{theorem}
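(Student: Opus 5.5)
The strategy is to follow the Ye / Kobayashi--Ochiai approach: do deformation theory of minimal rational curves, but apply it to the logarithmic tangent bundle.

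\emph{Step 1 (the splitting type of $T_X(-\log\Delta)$).} Since $K_X+\Delta$ is not nef, the logarithmic cone theorem gives a $(K_X+\Delta)$-negative extremal ray $R$. It is spanned by a rational curve $C$ with $0<-(K_X+\Delta)\cdot C\le n+1$. Choose $C$ of minimal degree in a minimal dominating family. Write the pullback as
\[
f^*T_X(-\log\Delta)=\bigoplus_i\OO(a_i), \qquad \sum_i a_i=-(K_X+\Delta)\cdot C>0 .
\]
The tensor $g$ gives an isomorphism $T_X(-\log\Delta)\simeq\Omega^1_X(\log\Delta)\otimes N$. Hence the multiset $\{a_i\}$ is invariant under $a\mapsto d-a$, where $d=N\cdot C$, and so $-(K_X+\Delta)\cdot C=nd/2$. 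If $C$ is not contained in $\Delta$, then $T_C\to f^*T_X(-\log\Delta)$ has image of degree $2-\#(C\cap\Delta)$. For a free general member, Mori bend-and-break gives $a_i\ge0$ and a standard splitting $\OO(2)\oplus\OO(1)^p\oplus\OO^q$ on $T_X$. The symmetry then forces $d\in\{1,2\}$.

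\emph{Step 2 (case $d=2$).} The log splitting is $\OO(2)\oplus\OO(1)^{n-2}\oplus\OO$. Here $C$ must be disjoint from $\Delta$, since otherwise the $\OO(2)$ summand would be lost. So $-(K_X+\Delta)\cdot C=n$, and the family is unsplit with $-K_X\cdot C\ge n$. I would then show that $\Delta\cdot C=0$ for all curves in the family, so that $\Delta$ is disjoint from a covering family. Together with $\rho(X)=1$, obtained from the Cho--Miyaoka--Shepherd-Barron / Kebekus characterization, this forces $\Delta=\varnothing$. After that, Ye's theorem, or Kobayashi--Ochiai for index $n$ with conformal structure, gives $X\simeq Q^n$.

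\emph{Step 3 (case $d=1$).} The log splitting is symmetric about $1/2$, namely $\OO(1)^{n/2}\oplus\OO^{n/2}$, so $n=2m$ is even. The alternative in which $C$ meets $\Delta$ in exactly one point with $T_X$ of index $n+1$ arises when the family has $-K_X\cdot C=n+1$. CMSB then gives $X\simeq\PP^n$ and $\Delta\cdot C=1$, so $\Delta$ is a hyperplane. This is alternative (2). In fact, for $\PP^n$ with a hyperplane one has $T(-\log H)\simeq\OO(1)^n$-like behaviour, which I must reconcile with the parity argument; I expect this case to be treated separately via the index $-(K_X+\Delta)\cdot C=n$. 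Otherwise $\phi_R$ is generated by curves with $N\cdot C=1$ and $-(K_X+\Delta)\cdot C=m$. For the fibration case, take a general fiber $F$ and the VMRT at a general point. The positive part $\OO(1)^m$ of the log tangent bundle is $g$-isotropic of maximal dimension. The rational curves through a point therefore sweep out a linear space, and by an Araujo-type characterization $F\simeq\PP^{m+s-1}$ with $N|_F=\OO(1)$. Comparing $-(K_F+\Delta|_F)=(m+s)-s=m$ with $mN$ shows that exactly $s$ hyperplane components are horizontal. The snc hypothesis puts them in general position, and $\dim Y=2m-(m+s-1)$. Over an open set of $Y$, Fujita/Hoering gives the projective bundle structure. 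For $s=1$, the extension describing $V$ comes from identifying the normal data to the relative hyperplane with $T_Y\otimes L^{-1}$ via $g$. If $Y$ is a point, then $\rho(X)=1$ and $-(K_X+\Delta)\equiv mN$, which is ample.

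\emph{Main obstacle.} I expect the hardest part to be controlling curves that meet or lie in $\Delta$. Bend-and-break for logarithmic tangent bundles is delicate, so the splitting-type rigidity in Step 1, and the claim that fibers are $\PP^{m+s-1}$ with exactly hyperplane boundary, need a log version of the VMRT linearity argument. I would first try to reduce to the complement $X\setminus\Delta$ and use general curves meeting $\Delta$ transversally.
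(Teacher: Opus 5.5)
\emph{Verdict: the proposal has genuine gaps; the case split misplaces the $(\PP^n,H)$ model, and one case is never excluded.}

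\textbf{First gap: the case division.} You organize the cases by the wrong invariant, and Step~2 is false as stated. You claim that $d=N\cdot C=2$ forces the log splitting $\OO(2)\oplus\OO(1)^{\oplus(n-2)}\oplus\OO$, and hence $C\cap\Delta=\varnothing$. But freeness controls $f^*T_X$, not $f^*T_X(-\log\Delta)$. The model $(\PP^n,H)$ with $N=\OO(2)$ is a counterexample: lines have $d=2$, meet $H$, and $T_{\PP^n}(-\log H)|_\ell\simeq\OO(1)^{\oplus n}$. This is why you are led to place $(\PP^n,H)$ under $d=1$, which contradicts $N\cdot\ell=2$.

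The missing input is Langer's logarithmic $\A^1$-normal form. In the class of a minimal curve of $R$ it lets you choose $f$ with $\delta=\deg(f^{-1}\Delta)_{\mathrm{red}}\le1$. Without it, the tangent map only gives a subsheaf of $E_f$ of degree $2-\#(f^{-1}\Delta)_{\mathrm{red}}$, which can be $\le0$. Your idea of using general curves meeting $\Delta$ transversally makes this worse. The proof then runs over the four pairs $(\delta,d)$:
\begin{itemize}
\item $(0,2)$ gives $Q^n$.
\item $(1,2)$ gives $(\PP^n,H)$. Langer's bounds force $\dim W=2n$ and $E_f\simeq\OO(1)^{\oplus n}$ for general $f$. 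Mori's length bound gives $\Delta\cdot C=1$, so $f^*T_X\simeq\OO(2)\oplus\OO(1)^{\oplus(n-1)}$, and CMSB/Kebekus apply.
\item $(1,1)$ gives branch (iii).
\item $(0,1)$ must be excluded.
\end{itemize}

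\textbf{Second gap: excluding $(\delta,d)=(0,1)$.} You presuppose a free general member of a dominating family. However, a $(K_X+\Delta)$-negative ray need not be of fiber type a priori. Moreover, for $\delta=0$, $d=1$ the curves are never free: $a_1\ge2$ and $a_i+a_{n+1-i}=1$ force $a_n\le-1$. The paper rules this case out with a real argument:
\begin{itemize}
\item Generic smoothness pins down $v^*T_X\simeq\OO(a)\oplus\OO(1)^{\oplus(m-1)}\oplus\OO^{\oplus(m-1)}\oplus\OO(1-a)$.
\item The curve is null, because $v^*g\in H^0(\PP^1,\OO(-3))=0$.
\item Ye's formulas then produce a line subbundle of degree $2a-1>a$ in the pull-back of the tangent bundle of $\PP_{\mathrm{sub}}(T_U)$, which is impossible.
\end{itemize}
Covering in the remaining cases comes from Langer's dimension estimates when $\delta=1$, and from Ionescu--Wi\'sniewski when $\delta=0$, $d=2$.

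\textbf{Two further steps that fail as proposed.}
\begin{itemize}
\item In the quadric case, $\rho(X)=1$ cannot come from CMSB/Kebekus, which characterize $\PP^n$ through curves of anticanonical degree $n+1$. The paper gets $\dim Y\le1$ from Ionescu--Wi\'sniewski. It excludes $\dim Y=1$ via the log Wahl theorem on a fiber: the fiber would be $\PP^{n-1}$, whose lines would have $N$-degree one, contradicting minimality.
\item In branch (iii), the VMRT/Araujo step does not work. The $W$-curves through a general point $x$ sweep out only an $m$-dimensional linear locus, namely the join of $x$ with $H_1\cap\cdots\cap H_s$. This is not the $(m+s-1)$-dimensional fiber when $s>1$, and it says nothing about the boundary.
\end{itemize}
The key step you are missing in branch (iii) is the following. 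Dualize $E|_F\twoheadrightarrow\OO_F^{\oplus b}$ and twist by $N_F$, using $E\simeq E^\vee\otimes N$. Then use $H^0(F,N_F^{-1})=0$ to obtain $N_F^{\oplus b}\hookrightarrow T_F(-\log D_F)$. Langer's logarithmic Wahl theorem now gives $(F,D_F,N_F)\simeq(\PP^k,H_1+\cdots+H_s,\OO(1))$. Only after this does $-(K_F+D_F)\equiv mN_F$ yield $k=m+s-1$.
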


The second result concerns the numerically trivial case. Under natural metric hypotheses on the open part $M:=X\setminus \Delta$, a logarithmic conformal structure with trivial conformal line bundle forces the restricted holonomy to be trivial. If, in addition, the induced flat connection has trivial monodromy, then $M$ is semi-abelian and $(X,\Delta)$ is its toroidal compactification.

\begin{theorem}\label{thm:semiabelian-intro}
Assume $K_X+\Delta\equiv 0$ and that $M$ carries a complete Ricci-flat K{\"a}hler metric $\omega$ satisfying \hyperlink{Bochner-extension}{\textup{(B)}} (Bochner extension) and \hyperlink{holonomy-condition}{\textup{(H)}} (restricted-holonomy condition). Assume moreover that $(X,\Delta)$ carries a log-conformal tensor $(N,g)$ with $N\simeq \OO_X$, and that the resulting flat connection on $T_M$ has trivial monodromy. Then $M$ is semi-abelian and $(X,\Delta)$ is a toroidal compactification of $M$.
\end{theorem}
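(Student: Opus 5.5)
We prove Theorem~\ref{thm:semiabelian-intro} by producing a parallel trivialization of $T_M$ from the tensor $g$, extending it across $\Delta$ as a logarithmic frame, and then applying the structure theory of pairs with trivial log tangent bundle.

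\textbf{Step 1: $g$ is parallel.} Since $N\simeq\OO_X$, the restriction $g|_M$ is a holomorphic section of $\Sym^2\Omega^1_M$. It is fiberwise nondegenerate, so it is a holomorphic Riemannian metric on $M$. We compare it with the Ricci-flat K\"ahler metric $\omega$ through a Bochner argument. Because $\omega$ is Ricci-flat, the Bochner formula gives $\Delta_\omega |g|^2_\omega = |\nabla g|^2_\omega \ge 0$ for holomorphic sections of $\Sym^2\Omega^1_M$. Since $g$ has logarithmic poles of controlled type along $\Delta$, hypothesis \hyperlink{Bochner-extension}{(B)} is exactly what allows us to integrate this identity over the complete manifold $M$ without boundary terms. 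This yields $\nabla^\omega g=0$.

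\textbf{Step 2: flat affine structure on $M$.} A parallel nondegenerate holomorphic quadratic form reduces the holonomy of $\omega$ into $O(n,\C)\cap U(n)=O(n,\mathbb{R})$. Under \hyperlink{holonomy-condition}{(H)} we conclude that the restricted holonomy is trivial. The natural candidate is the Levi-Civita connection of $\omega$, which coincides with the holomorphic Levi-Civita connection of $g$; it is flat. By the hypothesis of trivial monodromy, $T_M$ is then trivialized by $n$ parallel holomorphic vector fields $v_1,\dots,v_n$. These fields are $g$-orthonormal, and their pairwise brackets vanish because the connection is torsion-free and flat.

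\textbf{Step 3: extension across $\Delta$.} Each $v_i$ has constant $g$-length. Since $g$ is a nondegenerate pairing on $T_X(-\log\Delta)$, we want to show that $v_i$ extends to a section of $T_X(-\log\Delta)$. To do so, write $v_i$ in a local log frame near a point of $\Delta$, with coefficient functions holomorphic on $M$. Boundedness of $g(v_i,v_j)$ together with the completeness and volume growth encoded in (B) should give that these coefficients are locally bounded. Riemann extension then gives holomorphic extension. Nondegeneracy of $g$ forces the extended fields to remain a frame everywhere. Hence $T_X(-\log\Delta)\simeq\OO_X^{\oplus n}$ is trivialized by commuting log vector fields.

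\textbf{Step 4: conclusion.} The commuting log vector fields integrate to an action of the connected commutative algebraic group $G=\Aut^0(X,\Delta)$. This action has an open orbit $M$ with trivial stabilizers, since the fields are independent on $M$, so $M\simeq G$. Projectivity of $X$ rules out unipotent factors: a $\mathbb{G}_a$-factor would produce a log vector field with a nonreduced zero pattern, contradicting the freeness of $T_X(-\log\Delta)$. Alternatively, we invoke the Winkelmann/Brion characterization of pairs with trivial log tangent bundle. Therefore $G$ is semi-abelian, and $X$ is a smooth equivariant (toroidal) compactification with boundary $\Delta$.

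\textbf{Main obstacle.} We expect Step 3, the extension and the boundedness of the coefficients near $\Delta$, to be the crux, together with making Step 1 rigorous at the boundary. Our first attempt would use that $|v_i|_\omega$ is constant, since $v_i$ is parallel, and compare $\omega$ with a Poincar\'e-type model metric along $\Delta$.
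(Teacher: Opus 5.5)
Your outline follows the same architecture as the paper's proof: parallelism of $g$, flatness from \textup{(H)}, a parallel frame, extension across $\Delta$, and Winkelmann's criterion. However, the two steps that carry the content are not handled correctly.

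\textbf{Step~2 is missing its key argument.} You assert that \textup{(H)} forces $\Hol^\circ(\omega)$ to be trivial, but give no reason. Invariance of $g_x$ and of the Hermitian metric only places $\Hol^\circ(\omega)$ inside a conjugate of $SO(n,\mathbb R)=O(n,\mathbb R)\cap\mathrm{SU}(n)$. That is perfectly compatible with irreducibility: $SO(n,\mathbb R)$ acts irreducibly on $\C^n$ for $n\ge3$. What rules out the irreducible case is that such a group cannot be the restricted holonomy of a K\"ahler metric. The paper uses Berger--Simons here: a nonflat irreducible Ricci-flat K\"ahler metric has $\Hol^\circ=\mathrm{SU}(n)$ or $\mathrm{Sp}(n/2)$, and neither fixes a nonzero element of $\Sym^2(V^\vee)$. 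An elementary substitute starting from your own observation also works. By Schur, the antilinear endomorphism comparing $g_x$ with the Hermitian metric squares to a positive scalar. Hence $\Hol^\circ$ preserves a real form $V_{\mathbb R}$, and with it an orthogonal parallel splitting $V_{\mathbb R}\oplus JV_{\mathbb R}$ of the real tangent space. Since the parallel $J$ exchanges the two de Rham factors, both are flat, so $\Hol^\circ(\omega)$ is trivial.

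\textbf{Step~3, which you call the crux, would fail as proposed, but it is not an obstacle at all.} Constant values of $g(v_i,v_j)$ give no bound on the coefficients of $v_i$ in a logarithmic frame. The form $g$ is complex bilinear: if $e_1,\dots,e_n$ is a local logarithmic frame with $g(e_i,e_j)=\delta_{ij}$, then $\varphi\,(e_1+ie_2)$ is $g$-null for every holomorphic $\varphi$ on $M$. Also, $g$-orthonormal frames are determined only up to the noncompact group $O(n,\C)$. Nor does \textup{(B)} encode any completeness or volume-growth estimate. As defined in the paper, \textup{(B)} says that restriction is a bijection between $H^0(X,\mathcal E_{p,q})$ and the $\nabla^\omega$-parallel sections of $\mathcal E_{p,q}|_M$.
\begin{itemize}
\item Taking $(p,q)=(0,2)$ gives $\nabla^\omega g=0$ at once. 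Your Bochner integration would otherwise need growth control of $|g|_\omega$ near $\Delta$, which you do not have.
\item Taking $(p,q)=(1,0)$ extends the parallel frame $v_i$ to sections $s_i\in H^0(X,T_X(-\log\Delta))$. This is exactly how the paper proceeds.
\end{itemize}

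\textbf{From there your argument is correct, and slightly different from the paper's.} You can note that $g(s_i,s_j)$ is a section of $N\simeq\OO_X$, hence a constant equal to $\delta_{ij}$. So the $s_i$ are pointwise independent on all of $X$. The paper instead shows that the zero divisor of $s_1\wedge\cdots\wedge s_n\in H^0(X,\OO_X(-(K_X+\Delta)))$ is effective and numerically trivial, hence empty. Your Gram-matrix route avoids using $K_X+\Delta\equiv0$ at that point. Step~4 via Winkelmann's criterion matches the paper, and your sketch excluding $\mathbb G_a$-factors can be dropped.
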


In the third non-nef alternative, the integer $s$ counts the horizontal boundary components meeting a general minimal logarithmic curve at its unique boundary point; when the base is positive-dimensional, it determines both the fiber and the dimension of the base. The non-nef proof combines the determinant identity for the conformal line bundle with Langer's logarithmic $\A^1$-normal form, deformation estimates, and the self-duality of the logarithmic tangent bundle. These ingredients separate the three branches: the boundary-free case yields the quadric, conformal degree two with one boundary point yields $(\PP^n,H)$, and conformal degree one yields the extremal contraction with hyperplane-arrangement fibers. In the numerically trivial case, Bochner extension makes the conformal tensor parallel; the holonomy condition gives flatness, and trivial monodromy produces the logarithmic parallelisation used in the semi-abelian uniformization.

Section~\ref{sec:setup} introduces log-conformal structures, the numerical constraint on the conformal line bundle, the deformation theory of $(K_X+\Delta)$-minimal rational curves, and the logarithmic $\A^1$-normal form, together with the basic model geometries. Section~\ref{sec:splitting} records the conformal degree and the self-dual splitting constraints along a minimal rational curve. Section~\ref{sec:nonnef} proves the non-nef classification. Section~\ref{sec:trivial} treats the numerically trivial case under the stated Bochner, holonomy, and monodromy hypotheses. Section~\ref{subsec:cartan-interpretation} gives the logarithmic Cartan-geometric interpretation, and Section~\ref{subsec:boundary-contractions} treats boundary contractions, reflexive conformal structures on singular targets, and explicit surface examples.

\medskip\paragraph{\bf Acknowledgements.}
MC is grateful for the hospitality of the University of Ferrara during his visit in 2019, when this work was initiated; he is partially supported by the Università degli Studi di Bari and is a member of GNSAGA (INdAM). AM was supported by the PRIN 2022 grant (project 20223B5S8L), \emph{Birational geometry of moduli spaces and special varieties}; he is a member of GNSAGA (INdAM).

\section{Log-conformal structures, minimal curves and \texorpdfstring{$\A^1$}{A1}-normal form}\label{sec:setup}

\begin{definition}\label{def:logconf}
Let $(X,\Delta)$ be a smooth projective simple normal crossing pair, $\dim X=n\ge 3$.
Set $E:=T_X(-\log\Delta)$ and $E^\vee:=\Om_X^1(\log\Delta)$. A \emph{log-conformal structure} on $(X,\Delta)$ is a pair $(N,g)$ where $N$ is a line bundle on $X$ and $g\in H^0\bigl(X,\Sym^2(E^\vee)\otimes N\bigr)$ is fiberwise nondegenerate. That is, contraction with $g$ induces an isomorphism
\begin{equation}\label{eq:sharp}
g^\sharp:E \xrightarrow{\sim} E^\vee\otimes N.
\end{equation}
\end{definition}

\begin{lemma}\label{lem:det}
Let $(N,g)$ be a log-conformal structure as in Definition~\ref{def:logconf}. Then $N^{\otimes n}\simeq \det(E)^{\otimes 2}\simeq \OO_X\bigl(-2(K_X+\Delta)\bigr).$
For any morphism $f:\PP^1\to X$,
\begin{equation}\label{eq:degN}
n\cdot \deg(f^*N)=2\cdot\bigl(-(K_X+\Delta)\cdot f_*[\PP^1]\bigr).
\end{equation}
\end{lemma}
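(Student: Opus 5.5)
Take determinants of the isomorphism \eqref{eq:sharp}. Since $g^\sharp:E\to E^\vee\otimes N$ is an isomorphism of rank $n$ vector bundles, it induces an isomorphism of top exterior powers
\[
\det E\;\simeq\;\det(E^\vee\otimes N)\;\simeq\;\det(E^\vee)\otimes N^{\otimes n}\;\simeq\;(\det E)^{-1}\otimes N^{\otimes n}.
\]
Rearranging gives $N^{\otimes n}\simeq(\det E)^{\otimes 2}$. Concretely, in local frames $g^\sharp$ is a symmetric matrix of functions, and its determinant is a nowhere vanishing section of $(\det E^\vee)^{\otimes 2}\otimes N^{\otimes n}$. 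This trivializes that line bundle, which is the same conclusion.

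Next, identify $\det E$. The standard residue sequence for a simple normal crossing divisor $\Delta=\sum_i\Delta_i$,
\[
0\to\Omega^1_X\to\Omega^1_X(\log\Delta)\to\bigoplus_i\OO_{\Delta_i}\to0,
\]
shows that $\det\Omega^1_X(\log\Delta)\simeq\omega_X\otimes\OO_X(\Delta)=\OO_X(K_X+\Delta)$. In local coordinates this is also visible directly: $\det\Omega^1_X(\log\Delta)$ is generated by $\frac{dz_1}{z_1}\wedge\cdots\wedge\frac{dz_k}{z_k}\wedge dz_{k+1}\wedge\cdots\wedge dz_n$. Hence $\det E\simeq\OO_X(-(K_X+\Delta))$, and so $(\det E)^{\otimes2}\simeq\OO_X(-2(K_X+\Delta))$.

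For \eqref{eq:degN}, pull back the isomorphism $N^{\otimes n}\simeq\OO_X(-2(K_X+\Delta))$ along $f:\PP^1\to X$ and take degrees. The left side gives $n\deg f^*N$. The right side gives $-2\deg f^*\OO_X(K_X+\Delta)=2\bigl(-(K_X+\Delta)\cdot f_*[\PP^1]\bigr)$, using the projection formula $\deg f^*L=L\cdot f_*[\PP^1]$. This formula also holds when $f$ is constant, since then both sides vanish.

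No step is a real obstacle. The only points needing care are the sign convention for $\det(E^\vee\otimes N)$ and the identification of $\det\Omega^1_X(\log\Delta)$, which relies on the simple normal crossing hypothesis.
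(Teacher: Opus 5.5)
Your proposal is correct and follows the paper's proof essentially step for step. You take determinants of $g^\sharp$ to get $N^{\otimes n}\simeq(\det E)^{\otimes 2}$, identify $\det\Omega^1_X(\log\Delta)\simeq\OO_X(K_X+\Delta)$ using the simple normal crossing hypothesis, and pull back to $\PP^1$ to compare degrees; your residue-sequence and local-frame remarks just spell out details the paper leaves implicit.
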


\begin{proof}
Taking determinants in \eqref{eq:sharp} gives $\det(E)\simeq \det(E^\vee)\otimes N^{\otimes n}\simeq \det(E)^{-1}\otimes N^{\otimes n},$
hence $N^{\otimes n}\simeq \det(E)^{\otimes 2}$.
For a simple normal crossing divisor $\Delta$, $\det\Om_X^1(\log\Delta)\simeq \OO_X(K_X+\Delta)$, hence $\det E\simeq \OO_X(-(K_X+\Delta))$.
Pulling back to $\PP^1$ and taking degrees gives \eqref{eq:degN}.
\end{proof}

We first describe the models appearing in the classification theorem.

\begin{proposition}\label{prop:quadric-example}
Set $q(z_0,\dots,z_{n+1}) := z_0^2+\cdots+z_{n+1}^2$ on $\C^{n+2}$, and let $X:=Q^n:=\{[z_0:\cdots:z_{n+1}]\in \PP^{n+1}\mid q=0\}.$
Assume $n\ge 3$ and that $Q^n$ is smooth. Set $\Delta:=\varnothing$ and $N:=\OO_{Q^n}(2)$.
Then $(Q^n,\varnothing)$ carries a natural log-conformal structure in the sense of Definition~\ref{def:logconf}, namely $g_Q\in H^0\bigl(Q^n,\Sym^2\Om_{Q^n}^1\otimes \OO_{Q^n}(2)\bigr)$, and $g_Q$ is fiberwise nondegenerate. Moreover $K_{Q^n}\simeq \OO_{Q^n}(-n),
\,
N^{\otimes n}\simeq \OO_{Q^n}(2n)\simeq \OO_{Q^n}\bigl(-2K_{Q^n}\bigr)$.
\end{proposition}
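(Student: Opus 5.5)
The plan is to build $g_Q$ from the restriction of the flat quadratic form to $Q^n$, using the description of $T_{Q^n}$ in homogeneous coordinates. Write $\pi:\C^{n+2}\setminus\{0\}\to\PP^{n+1}$ and $\hat Q:=\pi^{-1}(Q^n)=\{q=0\}\setminus\{0\}$. Let $b$ be the symmetric bilinear form polarizing $q$. At a point $z\in\hat Q$, the tangent space to the cone is $T_z\hat Q=z^{\perp_b}$. Since $q(z)=0$, the line $\C z$ lies in $z^{\perp_b}$, and the restriction of $b$ to $z^{\perp_b}$ has radical exactly $\C z$, because $b$ is nondegenerate on $\C^{n+2}$. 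Hence $b$ descends to a nondegenerate symmetric form on
\[
z^{\perp_b}/\C z\;\simeq\;T_{[z]}Q^n,
\]
where the isomorphism is the differential of $\pi$. This already gives the pointwise nondegenerate conformal structure.

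Next I track how the form depends on the representative of $[z]$. Under $z\mapsto\lambda z$, a tangent vector $v$ at $[z]$ lifts to $\lambda v$ at $\lambda z$. This is the standard Euler-sequence identification $T_{\PP^{n+1}}\simeq \Hom(\OO(-1),\C^{n+2}/\OO(-1))$. So $b(v,w)$ scales by $\lambda^2$, and the rule $(v,w)\mapsto b(\tilde v,\tilde w)$ defines a section of $\Sym^2\Om^1_{Q^n}\otimes\OO_{Q^n}(2)$. Concretely, I would restrict the Euler sequence to $Q^n$ and note $T_{Q^n}=\mathcal{H}om(\OO(-1),\OO(-1)^\perp/\OO(-1))$. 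Then $b$ induces a nondegenerate pairing on $\OO(-1)^\perp/\OO(-1)$ with values in $\OO$, which yields a pairing $T_{Q^n}\otimes T_{Q^n}\to\OO(2)$. Fiberwise nondegeneracy follows from the radical computation above. Holomorphicity is clear, since everything is algebraic in $z$. Thus $(\OO_{Q^n}(2),g_Q)$ is a log-conformal structure with $\Delta=\varnothing$ in the sense of Definition~\ref{def:logconf}.

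Finally, adjunction gives $K_{Q^n}=(K_{\PP^{n+1}}+Q^n)|_{Q^n}=\OO(-n-2+2)=\OO_{Q^n}(-n)$. Hence $N^{\otimes n}=\OO(2n)=\OO(-2K_{Q^n})$, consistent with Lemma~\ref{lem:det}.

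The main obstacle is the bookkeeping of the twist, namely getting $\OO(2)$ rather than $\OO(-2)$. I would check it by a local chart: in the affine chart $z_0=1$, the form $\sum dz_i^2$ restricted to $Q^n$ is a genuine quadratic differential, and on overlaps it transforms by the square of the transition function of $\OO(1)$.
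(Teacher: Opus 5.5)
Your proposal is correct and follows the paper's proof essentially step by step. Like the paper, you restrict the constant form to the cone, show that its radical on $T_z\widehat Q=z^\perp$ is exactly $\C z$, use the weight-two scaling under $z\mapsto\lambda z$ to descend to a nondegenerate section of $\Sym^2\Om^1_{Q^n}\otimes\OO_{Q^n}(2)$, and get the canonical class by adjunction; your Euler-sequence identification $T_{Q^n}\simeq\mathcal{H}om(\OO(-1),\OO(-1)^\perp/\OO(-1))$ and explicit adjunction computation only make the $\OO(2)$ twist and the formula $K_{Q^n}\simeq\OO_{Q^n}(-n)$ more explicit than the paper does.
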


\begin{proof}
$K_{Q^n}\simeq \OO_{Q^n}(-n)$ and $\OO_{Q^n}\bigl(-2K_{Q^n}\bigr)\simeq \OO_{Q^n}(2n)=N^{\otimes n}$. Consider the affine cone $\widehat Q:=\{z=(z_0,\dots,z_{n+1})\in \C^{n+2}\setminus\{0\}\mid q(z)=0\},$
and let $p:\widehat Q\longrightarrow Q^n$ be the quotient by scalar multiplication. On $\C^{n+2}$ consider the constant symmetric bilinear form $\beta:=dz_0^2+\cdots+dz_{n+1}^2$. Fix $z\in \widehat Q$. Since $dq_z(v)=2\sum_{i=0}^{n+1} z_i v_i$, the tangent space to the cone at $z$ is
$$
T_z\widehat Q=\left\{v\in \C^{n+2}\ \middle|\ \sum_{i=0}^{n+1} z_i v_i=0\right\}.
$$
Since $q(z)=0$, the radial vector $z$ itself belongs to $T_z\widehat Q$.
The restriction $\beta|_{T_z\widehat Q}$ has radical exactly $\C z$. Indeed, if $w\in T_z\widehat Q$ satisfies $\beta(w,v)=0\, \text{for every } v\in T_z\widehat Q$, then $w$ is orthogonal to the hyperplane $T_z\widehat Q=z^\perp:=\left\{v\in \C^{n+2}\ \middle|\ \sum z_i v_i=0\right\}$
with respect to the standard symmetric form on $\C^{n+2}$. Hence $w\in (z^\perp)^\perp=\C z$. Conversely, since $z\in z^\perp$ and $q(z)=0$, $\beta(z,v)=\sum z_i v_i=0 \,\text{for every }v\in T_z\widehat Q$. Thus $\mathrm{rad}\bigl(\beta|_{T_z\widehat Q}\bigr)=\C z$. Hence $\beta$ induces a nondegenerate symmetric bilinear form on the quotient $T_z\widehat Q/\C z \simeq T_{p(z)}Q^n$. Under scalar multiplication $z\mapsto \lambda z$, the differentials transform as $dz_i\mapsto \lambda\, dz_i$.
Hence $\beta \mapsto \lambda^2 \beta$, and the bilinear form induced on the quotient $T_z\widehat Q/\C z$ is not invariant, but transforms with weight $2$. In other words, it descends to a global section $g_Q\in H^0\bigl(Q^n,\Sym^2\Om_{Q^n}^1\otimes \OO_{Q^n}(2)\bigr)$. Fiberwise nondegeneracy follows from the fact that $\beta|_{T_z\widehat Q}$ has radical exactly $\C z$.
\end{proof}

\begin{remark}\label{rem:quadric-local}
The tensor of Proposition~\ref{prop:quadric-example} has the following local expression. Work on the standard affine chart $U_0:=\{z_0\neq 0\}\subset Q^n, \qquad u_i:=\frac{z_i}{z_0}\quad (i=1,\dots,n+1).$
Then $U_0$ is cut out by $1+u_1^2+\cdots+u_{n+1}^2=0$. Fix the open subset $U_{0,n+1}:=U_0\cap \{u_{n+1}\neq 0\}$. On $U_{0,n+1}$, the implicit function theorem expresses $u_{n+1}$ as a holomorphic function of $(u_1,\dots,u_n)$, since $\frac{\partial}{\partial u_{n+1}}\bigl(1+u_1^2+\cdots+u_{n+1}^2\bigr)=2u_{n+1}\neq 0$. Write $u_{n+1}=h(u_1,\dots,u_n)$. Differentiating the equation $1+u_1^2+\cdots+u_n^2+h^2=0$ gives $2u_1\,du_1+\cdots+2u_n\,du_n+2h\,dh=0$, hence
\begin{equation}\label{eq:dh-on-quadric}
dh=-\frac{u_1}{h}\,du_1-\cdots-\frac{u_n}{h}\,du_n.
\end{equation}
Trivialize $\OO_{Q^n}(2)$ on $U_0$ by the section $z_0^2$. In this trivialization, the conformal tensor $g_Q$ is represented by the ordinary symmetric $2$-form $\widetilde g_Q:=du_1^2+\cdots+du_n^2+dh^2$. Using \eqref{eq:dh-on-quadric}, we obtain $dh^2=\left(\sum_{i=1}^n \frac{u_i}{h}\,du_i\right)^2
=\sum_{i,j=1}^n \frac{u_i u_j}{h^2}\,du_i\,du_j$. Therefore
$$
\widetilde g_Q
=
\sum_{i,j=1}^n
\left(
\delta_{ij}+\frac{u_i u_j}{h^2}
\right)
du_i\,du_j.
$$
The matrix of $\widetilde g_Q$ in the coordinate basis $(du_1,\dots,du_n)$ is $I_n+\frac{1}{h^2}\,u\,{}^t\!u,
\, u=(u_1,\dots,u_n)^t$. Its determinant is
$$
\det\!\left(I_n+\frac{1}{h^2}\,u\,{}^t\!u\right)
=
1+\frac{u_1^2+\cdots+u_n^2}{h^2}
=
\frac{h^2+u_1^2+\cdots+u_n^2}{h^2}
=
-\frac{1}{h^2}\neq 0,
$$
since $1+u_1^2+\cdots+u_n^2+h^2=0$. Thus the tensor is nondegenerate.
\end{remark}

\begin{remark}\label{rem:quadric-lines}
Let $\ell\subset Q^n$ be a line. Since $\Delta=\varnothing$, the logarithmic tangent bundle is the ordinary tangent bundle. The normal exact sequence $0\to N_{\ell/Q^n}\to N_{\ell/\PP^{n+1}}\to N_{Q^n/\PP^{n+1}}|_\ell\to 0$ becomes $0\to N_{\ell/Q^n}\to \OO_{\PP^1}(1)^{\oplus n}\to \OO_{\PP^1}(2)\to 0$, since $N_{\ell/\PP^{n+1}}\simeq \OO_{\PP^1}(1)^{\oplus n}$ and $N_{Q^n/\PP^{n+1}}\simeq \OO_{Q^n}(2)$. The standard normal-bundle calculation for a line on a smooth quadric gives $N_{\ell/Q^n}\simeq \OO_{\PP^1}(1)^{\oplus(n-2)}\oplus \OO_{\PP^1}$. Using $0\to T_\ell \to T_{Q^n}|_\ell \to N_{\ell/Q^n}\to 0$ and $T_\ell\simeq \OO_{\PP^1}(2)$, we obtain $T_{Q^n}|_\ell \simeq \OO_{\PP^1}(2)\oplus \OO_{\PP^1}(1)^{\oplus(n-2)}\oplus \OO_{\PP^1}$.
\end{remark}

We now consider projective space with a hyperplane boundary.

\begin{proposition}\label{prop:projective-example}
Let $X:=\PP^n$, let $\Delta:=H:=\{Z_0=0\}\subset \PP^n$, and set $N:=\OO_{\PP^n}(2)$.
Then $(\PP^n,H)$ carries a natural log-conformal structure $g_H\in H^0\bigl(\PP^n,\Sym^2\Om_{\PP^n}^1(\log H)\otimes \OO_{\PP^n}(2)\bigr)$, and $g_H$ is fiberwise nondegenerate. Moreover $K_{\PP^n}\simeq \OO_{\PP^n}(-n-1),\, K_{\PP^n}+H\simeq \OO_{\PP^n}(-n)$, so $N^{\otimes n}\simeq \OO_{\PP^n}(2n)\simeq \OO_{\PP^n}\bigl(-2(K_{\PP^n}+H)\bigr)$.
\end{proposition}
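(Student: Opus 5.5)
The plan is to write down the flat metric of the affine space $\A^n=\PP^n\setminus H$, twist it by the line bundle $\OO_{\PP^n}(2)$ to absorb its pole, and check in a chart meeting $H$ that what remains has logarithmic poles along $H$ and is nondegenerate there. The statements about canonical classes are routine: $K_{\PP^n}\simeq\OO_{\PP^n}(-n-1)$ and $H\in|\OO_{\PP^n}(1)|$ give $K_{\PP^n}+H\simeq\OO_{\PP^n}(-n)$. Hence $\OO_{\PP^n}(-2(K_{\PP^n}+H))\simeq\OO_{\PP^n}(2n)=N^{\otimes n}$, as predicted by Lemma~\ref{lem:det}.

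\emph{The explicit tensor.} On $U_0=\{Z_0\neq0\}$ use affine coordinates $x_i=Z_i/Z_0$ for $1\le i\le n$. Trivialize $\OO_{\PP^n}(2)$ on $U_0$ by $Z_0^2$ and define $g_H$ there by $dx_1^2+\cdots+dx_n^2$. Globally this is
$$
g_H=\sum_{i=1}^n\frac{(Z_0\,dZ_i-Z_i\,dZ_0)^2}{Z_0^2},
$$
read as a form on $\C^{n+1}\setminus\{0\}$ that is homogeneous of weight $2$ under $Z\mapsto\lambda Z$ and annihilates the Euler field. It therefore descends to a section of $\Sym^2\Om^1_{\PP^n}\otimes\OO_{\PP^n}(2)$ over $\PP^n\setminus H$, exactly as in the proof of Proposition~\ref{prop:quadric-example}. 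On $U_0=\PP^n\setminus H$ the logarithmic and ordinary cotangent bundles agree, and the flat form has identity matrix, so $g_H$ is nondegenerate there. It remains to analyse $g_H$ along $H$.

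\emph{Behaviour along $H$.} This is the only real step. By symmetry in $Z_1,\dots,Z_n$ it suffices to work on the chart $Z_1\neq0$. There use coordinates $y_0=Z_0/Z_1$ and $y_j=Z_j/Z_1$ for $2\le j\le n$; then $H=\{y_0=0\}$, and $\OO(2)$ is trivialized by $Z_1^2$. Since $x_1=1/y_0$ and $x_j=y_j/y_0$, we have
$$
dx_1=-\frac{dy_0}{y_0^2},\qquad dx_j=\frac{dy_j}{y_0}-\frac{y_j\,dy_0}{y_0^2}.
$$
Passing from the trivialization $Z_0^2$ to $Z_1^2$ multiplies by $(Z_0/Z_1)^2=y_0^2$. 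Hence
$$
g_H=\Bigl(\frac{dy_0}{y_0}\Bigr)^2+\sum_{j=2}^n\Bigl(dy_j-y_j\frac{dy_0}{y_0}\Bigr)^2 .
$$
Every term lies in $\Sym^2\Om^1_{\PP^n}(\log H)$, whose local frame is $dy_0/y_0,dy_2,\dots,dy_n$, so $g_H$ extends holomorphically across $H$ as a logarithmic tensor.

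\emph{Nondegeneracy along $H$.} The forms $dy_0/y_0$ and $dy_j-y_j\,dy_0/y_0$ arise from the logarithmic frame by a unipotent triangular change of basis. In the new frame $g_H$ is the identity matrix, so its determinant in the logarithmic frame is $1$ everywhere, including along $H$. Equivalently, $T_{\PP^n}(-\log H)\simeq\OO_{\PP^n}(1)^{\oplus n}$, and $g_H$ is a constant nondegenerate quadratic form on this trivial twist. The only point needing care is the bookkeeping of the transition factor $y_0^2$, which exactly cancels the double pole of the flat metric at infinity.
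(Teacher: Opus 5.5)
Your proof is correct and is essentially the paper's argument: the paper introduces the forms $\omega_i=(Z_0\,dZ_i-Z_i\,dZ_0)/Z_0$ as global sections of $\Om^1_{\PP^n}(\log H)\otimes\OO_{\PP^n}(1)$. It checks on $\{Z_1\neq0\}$ that they become $-dy_0/y_0$ and $dy_j-y_j\,dy_0/y_0$, and it sets $g_H=\sum\omega_i^2$, which is exactly your tensor and your chart computation. The only difference is packaging: treating the $\omega_i$ individually yields the splitting $T_{\PP^n}(-\log H)\simeq\OO_{\PP^n}(1)^{\oplus n}$ as a byproduct, which you mention only in passing and which the proposition does not need.
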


\begin{proof}
For $i=1,\dots,n$ define
\begin{equation}\label{eq:omegai-global}
\omega_i:=\frac{Z_0\,dZ_i-Z_i\,dZ_0}{Z_0}.
\end{equation}
Each $\omega_i$ is a global section of $\Om_{\PP^n}^1(\log H)\otimes \OO_{\PP^n}(1)$. Indeed, the numerator has homogeneous degree $2$ in the affine cone variables, and dividing by $Z_0$ lowers the weight by one; the only possible pole is along $Z_0=0$, and it is logarithmic. Set $z_i:=\frac{Z_i}{Z_0}$ for $i=1,\dots,n$. Then $U_0\simeq \A^n$ and $H\cap U_0=\varnothing$.
In the trivialization of $\OO_{\PP^n}(1)$ given by $Z_0$, one computes $\omega_i=d\!\left(\frac{Z_i}{Z_0}\right)=dz_i$. Thus on $U_0$ the forms $\omega_1,\dots,\omega_n$ are exactly the standard coordinate $1$-forms.

Work on $U_1:=\{Z_1\neq 0\}$. Use coordinates $u_0:=Z_0/Z_1,\, u_i:=Z_i/Z_1,\, i=2,\dots,n$. Then $H\cap U_1=\{u_0=0\}$. In the trivialization of $\OO_{\PP^n}(1)$ given by $Z_1$, the first form is $\omega_1=\frac{Z_0\,dZ_1-Z_1\,dZ_0}{Z_0}.$ Since $Z_0=u_0Z_1$, we have $dZ_0=du_0\cdot Z_1+u_0\,dZ_1$, hence $Z_0\,dZ_1-Z_1\,dZ_0
=
u_0 Z_1 dZ_1 - Z_1(du_0\cdot Z_1+u_0 dZ_1)
=
- Z_1^2\,du_0$. Dividing by $Z_0=u_0Z_1$ gives
\begin{equation}\label{eq:omega1-local}
\omega_1 = -\,\frac{du_0}{u_0}
\qquad
\text{in the local frame }Z_1.
\end{equation}
For $i\ge 2$, since $Z_i=u_i Z_1$, we have $dZ_i=du_i\cdot Z_1+u_i\,dZ_1$. Therefore
$$
Z_0\,dZ_i-Z_i\,dZ_0
=
u_0 Z_1(du_i\cdot Z_1+u_i dZ_1)-u_i Z_1(du_0\cdot Z_1+u_0 dZ_1)
=
Z_1^2(u_0\,du_i-u_i\,du_0),
$$
and dividing by $Z_0=u_0Z_1$ gives
\begin{equation}\label{eq:omegai-local}
\omega_i
=
du_i-\frac{u_i}{u_0}\,du_0
=
du_i-u_i\,\frac{du_0}{u_0},
\qquad i=2,\dots,n.
\end{equation}

Equations \eqref{eq:omega1-local} and \eqref{eq:omegai-local} show explicitly that the $\omega_i$ have only logarithmic poles along $H$. On $U_0$, this is clear since $\omega_i=dz_i$. On $U_1$, the local generators of $\Om_{\PP^n}^1(\log H)\otimes \OO_{\PP^n}(1)$ are $\frac{du_0}{u_0},\ du_2,\ \dots,\ du_n$. Equations~\eqref{eq:omega1-local} and \eqref{eq:omegai-local} recover these generators from the $\omega_i$: $\frac{du_0}{u_0}=-\omega_1,
\, du_i=\omega_i-u_i\omega_1,\, i=2,\dots,n$. Hence $\omega_1,\dots,\omega_n$ form a basis of $\Om_{\PP^n}^1(\log H)\otimes \OO_{\PP^n}(1)$ on $U_1$. Hence $\Om_{\PP^n}^1(\log H)\otimes \OO_{\PP^n}(1)\simeq \OO_{\PP^n}^{\oplus n}$, and after dualising,
\begin{equation}\label{eq:TPn-log-splitting}
T_{\PP^n}(-\log H)\simeq \OO_{\PP^n}(1)^{\oplus n}.
\end{equation}
Define
$$
g_H:=\omega_1^2+\cdots+\omega_n^2
\in H^0\bigl(\PP^n,\Sym^2\Om_{\PP^n}^1(\log H)\otimes \OO_{\PP^n}(2)\bigr).
$$
Since the $\omega_i$ form a local frame of $\Om_{\PP^n}^1(\log H)\otimes \OO_{\PP^n}(1)$, the matrix of $g_H$ in this frame is the identity matrix.
Hence $g_H$ is fiberwise nondegenerate.
\end{proof}

\begin{remark}\label{rem:projective-local}
On the affine chart $U_0=\{Z_0\neq 0\}$, where $\omega_i=dz_i$, the tensor of Proposition~\ref{prop:projective-example} is simply $g_H=dz_1^2+\cdots+dz_n^2$. Thus on $\PP^n\setminus H\simeq \A^n$ it is the standard flat quadratic form. On the boundary chart $U_1=\{Z_1\neq 0\}$, using \eqref{eq:omega1-local} and \eqref{eq:omegai-local}, one gets
\begin{equation}\label{eq:gH-local}
g_H
=
\left(\frac{du_0}{u_0}\right)^2
+
\sum_{i=2}^n
\left(du_i-u_i\frac{du_0}{u_0}\right)^2.
\end{equation}

\end{remark}

\begin{remark}\label{rem:projective-tangent-frame}
On $U_1$, define vector fields
$$
\xi_1:=-u_0\frac{\partial}{\partial u_0}-\sum_{i=2}^n u_i\frac{\partial}{\partial u_i},
\qquad
\xi_i:=\frac{\partial}{\partial u_i}\quad (i=2,\dots,n).
$$
These are logarithmic vector fields along $H\cap U_1=\{u_0=0\}$.
A direct computation gives $\omega_1(\xi_1)=1,\, \omega_i(\xi_1)=0,\, i\ge 2$, and $\omega_1(\xi_j)=0,\, \omega_i(\xi_j)=\delta_{ij},\, i,j\ge 2$. Thus $\xi_1,\dots,\xi_n$ form the local frame of $T_{\PP^n}(-\log H)$ dual to the coframe $\omega_1,\dots,\omega_n$. In this frame, the tensor $g_H$ is represented by the identity matrix.
\end{remark}

\begin{remark}\label{rem:projective-lines}
Let $\ell\subset \PP^n$ be a line not contained in $H$, and $f:\PP^1\to \ell\subset \PP^n$ a parametrisation. Then $\ell$ meets $H$ in exactly one point and the intersection is reduced. Hence $(f^{-1}H)_{\mathrm{red}}$ is a single point and $\delta=\deg\bigl((f^{-1}H)_{\mathrm{red}}\bigr)=1$. Moreover, by \eqref{eq:TPn-log-splitting}, $f^*T_{\PP^n}(-\log H)\simeq \OO_{\PP^1}(1)^{\oplus n}$.
\end{remark}

\begin{proposition}\label{prop:abelian-projective-example}
Let $A$ be an abelian variety of dimension $r\ge 1$, let $p_1:A\times \PP^r\to A$ and $p_2:A\times \PP^r\to \PP^r$ be the projections, and let $H\subset \PP^r$ be a hyperplane. Set $X:=A\times \PP^r$, $\Delta:=A\times H$, and
$N:=p_2^*\OO_{\PP^r}(1)$.
Then $(X,\Delta)$ carries an everywhere nondegenerate log-conformal tensor with values in $N$, and $K_X+\Delta = -r\,p_2^*H$. In particular $K_X+\Delta$ is not nef.
\end{proposition}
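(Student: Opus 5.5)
The plan is to build the tensor from a splitting of the logarithmic tangent bundle, following the proof of Proposition~\ref{prop:projective-example}. Since $\Delta=A\times H$ is the pullback of $H$ under $p_2$ and $p_1$ is smooth, the logarithmic tangent bundle of the product splits:
$$
E=T_X(-\log\Delta)\simeq p_1^*T_A\oplus p_2^*T_{\PP^r}(-\log H)\simeq \OO_X^{\oplus r}\oplus p_2^*\OO_{\PP^r}(1)^{\oplus r},
$$
using $T_A\simeq\OO_A^{\oplus r}$ and \eqref{eq:TPn-log-splitting}. Dually,
$$
E^\vee\simeq \OO_X^{\oplus r}\oplus N^{-1}{}^{\oplus r}.
$$
Concretely, we choose a basis $\alpha_1,\dots,\alpha_r$ of invariant $1$-forms on $A$. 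We also take the forms $\omega_1,\dots,\omega_r$ of \eqref{eq:omegai-global} on $\PP^r$; these are sections of $\Om^1_{\PP^r}(\log H)\otimes\OO(1)$ which form a global frame. Then $p_1^*\alpha_i$ and $p_2^*\omega_i$ give a global frame of $\Om^1_X(\log\Delta)$, once the second group is twisted by $N^{-1}$.

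A naive sum of squares will not work, because the weights differ: $(p_1^*\alpha_i)^2$ takes values in $\OO_X$, while $(p_2^*\omega_i)^2$ takes values in $N^2$. The key idea is therefore to use a mixed (hyperbolic) pairing, in which each term has weight $0+1=1$. We set
$$
g:=\sum_{i=1}^r p_1^*\alpha_i\cdot p_2^*\omega_i\in H^0\bigl(X,\Sym^2\Om^1_X(\log\Delta)\otimes N\bigr).
$$
Each product $p_1^*\alpha_i\cdot p_2^*\omega_i$ is a section of $\Om^1\otimes\Om^1(\log)\otimes N$, so after symmetrisation $g$ has the required type. In the frame $\{p_1^*\alpha_i\}\cup\{p_2^*\omega_i\}$, the matrix of $g$ is
$$
\frac12\begin{pmatrix}0&I_r\\ I_r&0\end{pmatrix},
$$
which is invertible at every point. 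Hence $g^\sharp:E\to E^\vee\otimes N$ is an isomorphism, and $g$ is fiberwise nondegenerate. This is consistent with Lemma~\ref{lem:det}: here $n=2r$, $\det E\simeq N^{r}$, and so $N^{2r}\simeq(\det E)^2$.

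For the canonical class we use $K_A\sim0$ together with $K_{\PP^r}+H\simeq\OO_{\PP^r}(-r)$ from Proposition~\ref{prop:projective-example}. This gives
$$
K_X+\Delta=p_1^*K_A+p_2^*(K_{\PP^r}+H)=-r\,p_2^*H.
$$
Alternatively, $\det E^\vee\simeq N^{-r}$ yields the same formula. Finally, let $\ell$ be a line in a fiber $\{a\}\times\PP^r$. Then $(K_X+\Delta)\cdot\ell=-r<0$, so $K_X+\Delta$ is not nef.

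The only point that needs care is the first step: checking that the logarithmic cotangent bundle of the product decomposes as the direct sum of pullbacks. This is a local statement. In coordinates $(a,u_0,u_2,\dots)$, with $\Delta=\{u_0=0\}$, the local generators of $\Om^1_X(\log\Delta)$ are $da_j$, $du_0/u_0$ and $du_i$, and these are exactly the pulled-back generators. Everything else is routine bookkeeping of weights.
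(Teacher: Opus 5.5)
Your proposal is correct and follows essentially the same route as the paper: the splitting $T_X(-\log\Delta)\simeq\OO_X^{\oplus r}\oplus N^{\oplus r}$, a hyperbolic pairing between the two summands with matrix $\bigl(\begin{smallmatrix}0&I_r\\ I_r&0\end{smallmatrix}\bigr)$ (your factor $\tfrac12$ is only a symmetrisation convention), and the computation $K_X+\Delta=p_1^*K_A+p_2^*(K_{\PP^r}+H)=-r\,p_2^*H$. The explicit frames $p_1^*\alpha_i$, $p_2^*\omega_i$, the remark on why a sum of squares fails, and the fibre line $\ell$ with $(K_X+\Delta)\cdot\ell=-r$ only spell out what the paper states more tersely.
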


\begin{proof}
Since $T_A\simeq \OO_A^{\oplus r}$ and Proposition~\ref{prop:projective-example} gives $T_{\PP^r}(-\log H)\simeq \OO_{\PP^r}(1)^{\oplus r}$, it follows that $$
T_X(-\log\Delta)
\simeq p_1^*T_A\oplus p_2^*T_{\PP^r}(-\log H)
\simeq \OO_X^{\oplus r}\oplus N^{\oplus r}.
$$
Let $(e_1,\dots,e_r)$ be the standard frame of the trivial summand and let $(\eta_1,\dots,\eta_r)$ be the tautological frame of $N^{\oplus r}$. Define a symmetric $N$-valued bilinear form on $T_X(-\log\Delta)$ by $g\bigl((u,\varphi),(u',\varphi')\bigr):=\varphi'(u)+\varphi(u')$. In the frame $(e_1,\dots,e_r,\eta_1,\dots,\eta_r)$ its matrix is
$$
\begin{pmatrix}
0 & I_r\\
I_r & 0
\end{pmatrix},
$$
hence $g$ is fiberwise nondegenerate. Thus $(X,\Delta)$ carries a log-conformal structure. Since $K_A\simeq \OO_A$ and $K_{\PP^r}+H\simeq \OO_{\PP^r}(-r)$, $K_X+\Delta = p_1^*K_A+p_2^*(K_{\PP^r}+H) = -r\,p_2^*H$, which is not nef.
\end{proof}

\begin{remark}\label{rem:fibrational-model}
Proposition~\ref{prop:abelian-projective-example} exhibits the $s=1$ geometric pattern underlying the split-null alternative of the main theorem: the pair $(A\times \PP^r, A\times H)$ carries a nondegenerate log-conformal tensor, and the projection to $A$ has general fiber $\PP^r$ with hyperplane boundary.
\end{remark}

\begin{proposition}\label{prop:multi-hyperplane-example}
Let $m\ge1$ and $1\le s\le m+1$. Let $A$ be an abelian variety of dimension $m-s+1$ (a point if $s=m+1$), let $H_1,\dots,H_s\subset \PP^{m+s-1}$ be hyperplanes in general position, and set
$$
X:=A\times\PP^{m+s-1},\qquad
\Delta:=A\times(H_1+\cdots+H_s),\qquad
N:=p_2^*\OO_{\PP^{m+s-1}}(1).
$$
Then $\dim X=2m$, the pair $(X,\Delta)$ carries an everywhere nondegenerate log-conformal tensor with values in $N$, and $K_X+\Delta=-m\,p_2^*H.$
For $s=m+1$ this gives the Picard-rank-one model $(\PP^{2m},H_1+\cdots+H_{m+1}).$
\end{proposition}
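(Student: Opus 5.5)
The plan is to compute $T_{\PP^{m+s-1}}(-\log(H_1+\cdots+H_s))$ explicitly, exactly as in Proposition~\ref{prop:projective-example}, and then build a split (hyperbolic) form as in Proposition~\ref{prop:abelian-projective-example}. Set $d:=m+s-1$. Since the $H_i$ are in general position and $s\le m+1\le d+1$ (using $m\ge1$), we may choose coordinates with $H_i=\{Z_{i-1}=0\}$ for $i=1,\dots,s$.

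Consider the Euler-type logarithmic forms
$$
\omega_{j}:=\frac{Z_0\,dZ_j-Z_j\,dZ_0}{Z_0},\qquad j=1,\dots,d .
$$
As computed in Proposition~\ref{prop:projective-example}, these are sections of $\Om^1_{\PP^d}(\log H_1)\otimes\OO(1)$ and give a frame of it. For $s\ge2$ I will instead use the standard exact sequence for a hyperplane arrangement,
$$
0\to \Om^1_{\PP^d}(\log H_1)\to \Om^1_{\PP^d}(\log(H_1+\cdots+H_s))\to \bigoplus_{i=2}^s\OO_{H_i}\to 0 .
$$
The cleaner route is to work on the torus-type chart. The log forms $\frac{dZ_{i-1}}{Z_{i-1}}-\frac{dZ_0}{Z_0}$ for $i=2,\dots,s$ are global sections of $\Om^1(\log\Delta)$ with no twist, and together with the twisted forms $\omega_j$ for $j=s,\dots,d$ (which lie in $\Om^1(\log H_1)(1)$) they should span. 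This yields the splitting
$$
T_{\PP^d}(-\log(H_1+\cdots+H_s))\simeq \OO^{\oplus(s-1)}\oplus\OO(1)^{\oplus(d-s+1)}=\OO^{\oplus(s-1)}\oplus\OO(1)^{\oplus m}.
$$
I will verify this splitting chartwise: on each chart $\{Z_k\neq0\}$, the given forms transform by a unimodular (triangular) matrix onto the standard local log frame. This mirrors the check of \eqref{eq:omega1-local} and \eqref{eq:omegai-local}. A sanity check is the determinant: it should be $\OO(m)$, matching $-(K+\Delta)=\OO(d+1-s)=\OO(m)$.

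Next, form the product. Since $T_A\simeq\OO_A^{\oplus(m-s+1)}$, we get
$$
E:=T_X(-\log\Delta)\simeq \OO_X^{\oplus(m-s+1)}\oplus\OO_X^{\oplus(s-1)}\oplus N^{\oplus m}=\OO_X^{\oplus m}\oplus N^{\oplus m}.
$$
The fact that the trivial summands number exactly $m$, equal to the rank of the $N$-part, is the key numerical coincidence; it is what forces $\dim A=m-s+1$. On $\OO^{\oplus m}\oplus N^{\oplus m}$, define $g((u,\varphi),(u',\varphi'))=\varphi'(u)+\varphi(u')$, with matrix $\begin{pmatrix}0&I_m\\ I_m&0\end{pmatrix}$ in the frame. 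This is a global section of $\Sym^2E^\vee\otimes N$ and is fiberwise nondegenerate, exactly as in Proposition~\ref{prop:abelian-projective-example}. Clearly $\dim X=(m-s+1)+(m+s-1)=2m$. Finally, $K_X+\Delta=p_1^*K_A+p_2^*(K_{\PP^d}+H_1+\cdots+H_s)=p_2^*\OO(-(d+1)+s)=-m\,p_2^*H$. This agrees with Lemma~\ref{lem:det}, since $N^{2m}\simeq\OO(2m)$. When $s=m+1$, $A$ is a point and $X=\PP^{2m}$.

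The main obstacle I expect is the splitting of the log tangent bundle of the arrangement. It must be checked globally, not only generically: one has to confirm that the chosen $d$ sections generate $\Om^1(\log\Delta)$ twisted appropriately at points of $H_i$ and at their intersections, including the hyperplane at infinity relative to the chart. The first approach I will try is to use the identification $\PP^d\setminus(H_1\cup\cdots\cup H_s)\simeq \mathbb{G}_m^{s-1}\times\A^{d-s+1}$ and compute the frame on each standard chart.
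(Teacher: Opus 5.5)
Your proposal is correct. Apart from one step it follows the paper's argument: split the logarithmic tangent bundle of the arrangement, add the trivial bundle $T_A$, take the hyperbolic pairing between $\OO_X^{\oplus m}$ and $N^{\oplus m}$, and compute $K_X+\Delta$ on each factor.

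The difference is the splitting $T_{\PP^d}(-\log(H_1+\cdots+H_s))\simeq\OO^{\oplus(s-1)}\oplus\OO(1)^{\oplus m}$. The paper simply quotes it from \cite[Remark~1.2]{Langer24}. You prove it with the explicit coframe $\eta_i=d\log(Z_{i-1}/Z_0)$ for $2\le i\le s$, together with $\omega_j$ for $s\le j\le d$.

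The chartwise check you flag as the main obstacle does go through, including at intersection points of the $H_i$:
\begin{itemize}
\item On $U_0$ these forms are exactly $dz_1/z_1,\dots,dz_{s-1}/z_{s-1},dz_s,\dots,dz_d$.
\item On $U_k$ with $1\le k\le s-1$ one has $\eta_{k+1}=-du_0/u_0$.
\item On $U_k$ with $k\ge s$ one has $\omega_k=-du_0/u_0$ in the frame $Z_k$.
\item All remaining forms are $du_l/u_l-du_0/u_0$ or $du_j-u_j\,du_0/u_0$.
\end{itemize}
So on each chart the transition matrix to the standard logarithmic frame is triangular with diagonal entries $\pm1$. The residue-sequence detour you mention is therefore unnecessary.

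The citation is shorter. Your route is self-contained and makes the tensor explicit. For example, for $s=m+1$ one may take, up to normalisation, $g=\sum_{k=1}^m \frac{dz_k}{z_k}\,dz_{m+k}$ on $\PP^{2m}\setminus H_1$, which displays the two isotropic summands directly.
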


\begin{proof}
For an arrangement of $s$ hyperplanes in general position, \cite[Remark~1.2]{Langer24} gives
$$
T_{\PP^{m+s-1}}\bigl(-\log(H_1+\cdots+H_s)\bigr)
\simeq \OO_{\PP^{m+s-1}}^{\oplus(s-1)}\oplus
\OO_{\PP^{m+s-1}}(1)^{\oplus m}.
$$
Since $T_A\simeq\OO_A^{\oplus(m-s+1)}$, it follows that $T_X(-\log\Delta)\simeq \OO_X^{\oplus m}\oplus N^{\oplus m}.$
The hyperbolic pairing between the two summands, as in Proposition~\ref{prop:abelian-projective-example}, is a fiberwise nondegenerate symmetric $N$-valued form. Finally, $K_{\PP^{m+s-1}}+H_1+\cdots+H_s=-(m+s)H+sH=-mH,$
and $K_A\simeq\OO_A$, proving the formula for $K_X+\Delta$.
\end{proof}

\begin{remark}\label{rem:projective-general-matrix}
More generally, fix any invertible symmetric matrix $A=(a_{ij})\in \mathrm{Mat}_{n\times n}(\C), \qquad A={}^{t}\!A, \qquad \det(A)\neq 0,$
and define
$$
g_{H,A}:=\sum_{i,j=1}^n a_{ij}\,\omega_i\,\omega_j
\in H^0\bigl(\PP^n,\Sym^2\Om_{\PP^n}^1(\log H)\otimes \OO_{\PP^n}(2)\bigr).
$$
In the frame $\omega_1,\dots,\omega_n$, the matrix of $g_{H,A}$ is exactly $A$.
Thus $g_{H,A}$ is fiberwise nondegenerate if and only if $A$ is invertible.
The model of Proposition~\ref{prop:projective-example} is the special case $A=I_n$.
\end{remark}

For the two rank-one models, the isomorphism $g^\sharp:T_X(-\log\Delta)\xrightarrow{\sim}\Om_X^1(\log\Delta)\otimes N$
from Definition~\ref{def:logconf} describes nondegeneracy on the tangent side.

\begin{proposition}\label{prop:projective-sharp}
Let $(X,\Delta)=(\PP^n,H)$ be as in Proposition~\ref{prop:projective-example}, with $H=\{Z_0=0\},\qquad N=\OO_{\PP^n}(2), \qquad g_H=\omega_1^2+\cdots+\omega_n^2.$
Then contraction with $g_H$ induces an isomorphism $g_H^\sharp:T_{\PP^n}(-\log H)\xrightarrow{\sim}\Om_{\PP^n}^1(\log H)\otimes \OO_{\PP^n}(2)$. Moreover this isomorphism is diagonal in the natural logarithmic frames.
\end{proposition}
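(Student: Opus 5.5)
The plan is to read off the map $g_H^\sharp$ directly from the explicit frames constructed in the proof of Proposition~\ref{prop:projective-example} and in Remark~\ref{rem:projective-tangent-frame}. That proof shows that the forms $\omega_1,\dots,\omega_n$ are global sections of $\Om_{\PP^n}^1(\log H)\otimes\OO_{\PP^n}(1)$ and give a global frame of it. Dually, $T_{\PP^n}(-\log H)\simeq\OO_{\PP^n}(1)^{\oplus n}$ by \eqref{eq:TPn-log-splitting}. We choose the dual frame $\xi_1,\dots,\xi_n$, which consists of sections of $T_{\PP^n}(-\log H)\otimes\OO_{\PP^n}(-1)$ and satisfies $\omega_i(\xi_j)=\delta_{ij}$. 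On $U_1$ these are given explicitly in Remark~\ref{rem:projective-tangent-frame}. On $U_0$ they are $\partial/\partial z_i$, after trivializing by $Z_0$.

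First we compute the contraction. For a local section $v$ of $T_{\PP^n}(-\log H)$ we have
$$
g_H^\sharp(v)=\sum_{i=1}^n 2\,\omega_i(v)\,\omega_i,
$$
or without the factor $2$ under the usual convention $\omega_i^2=\omega_i\otimes\omega_i$. Evaluating on the frame gives $g_H^\sharp(\xi_j)=\omega_j$, up to that harmless constant. The weights match: $\xi_j$ carries $\OO(-1)$, $\omega_j$ carries $\OO(1)$, and $g_H$ carries $\OO(2)$. So $g_H^\sharp$ is a morphism $T_{\PP^n}(-\log H)\to\Om^1_{\PP^n}(\log H)\otimes\OO_{\PP^n}(2)$. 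It sends the frame $\{\xi_j\}$, twisted by $\OO(1)$, to the frame $\{\omega_j\}$, twisted by $\OO(1)$.

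Second, we conclude the claims. A morphism of vector bundles that carries a local frame to a local frame is an isomorphism. Its matrix in these frames is the identity, so it is diagonal. This matrix is just the matrix of $g_H$ in the frame $\omega_i$, which the proof of Proposition~\ref{prop:projective-example} already showed to be $I_n$.

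For concreteness we also check this on the boundary chart $U_1$ using \eqref{eq:gH-local}. There $g_H^\sharp(\xi_1)=-\,du_0/u_0$ and $g_H^\sharp(\partial_{u_i})=du_i-u_i\,du_0/u_0$. These are manifestly logarithmic, and they again form a basis of $\Om^1_{\PP^n}(\log H)$ on $U_1$.

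The only delicate point is bookkeeping. We must confirm that the $\xi_j$, defined on $U_1$ in the $Z_1$-trivialization, glue with $\partial/\partial z_j$ on $U_0$ to global sections of $T_{\PP^n}(-\log H)\otimes\OO(-1)$. This follows from duality with the global $\omega_i$, so no separate cocycle computation is needed.
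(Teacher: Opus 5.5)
Your proposal is correct and matches the paper's proof. Both use the frame $\xi_1,\dots,\xi_n$ dual to the logarithmic coframe $\omega_1,\dots,\omega_n$, compute $g_H^\sharp(\xi_j)=\omega_j$ (up to your harmless factor-of-two convention), and conclude that $g_H^\sharp$ is an isomorphism whose matrix is the identity, hence diagonal. Taking the dual of the global coframe from Proposition~\ref{prop:projective-example} is slightly cleaner bookkeeping: the paper checks only $U_0$ and $U_1$, which do not cover $\PP^n$ for $n\ge 2$, and so implicitly relies on the symmetry among $Z_1,\dots,Z_n$.
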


\begin{proof}
By Remark~\ref{rem:projective-tangent-frame}, on the chart $U_1=\{Z_1\neq 0\}$ the logarithmic tangent bundle $T_{\PP^n}(-\log H)$ admits the local frame
$$
\xi_1:=-u_0\frac{\partial}{\partial u_0}-\sum_{i=2}^n u_i\frac{\partial}{\partial u_i},
\qquad
\xi_i:=\frac{\partial}{\partial u_i},\quad i=2,\dots,n,
$$
which is dual to the logarithmic coframe $\omega_1=-\frac{du_0}{u_0}, \qquad \omega_i=du_i-u_i\frac{du_0}{u_0},\quad i=2,\dots,n.$
Since $g_H=\omega_1^2+\cdots+\omega_n^2$, any logarithmic vector field $\eta$ satisfies
$$
g_H^\sharp(\eta)=g_H(\eta,-)=\sum_{i=1}^n\omega_i(\eta)\,\omega_i.
$$
In particular, $g_H^\sharp(\xi_j)=\omega_j$ for $j=1,\dots,n$. Thus the matrix of $g_H^\sharp$ in the frames $(\xi_1,\dots,\xi_n)$ and $(\omega_1,\dots,\omega_n)$ is the identity matrix, and $g_H^\sharp$ is an isomorphism on $U_1$.
The same holds on the affine chart $U_0=\{Z_0\neq 0\}$, where $(dz_1,\dots,dz_n)$ is dual to $(\partial/\partial z_1,\dots,\partial/\partial z_n)$ and $g_H^\sharp(\partial/\partial z_i)=dz_i$. Hence the map is again represented by the identity matrix.
Since the local matrices are invertible everywhere, $g_H^\sharp$ is globally an isomorphism. This proves that $g_H$ is fiberwise nondegenerate in the sense of Definition~\ref{def:logconf}.
\end{proof}

\begin{remark}\label{rem:projective-sharp-matrix}
On $U_1=\{Z_1\neq 0\}$, in the logarithmic tangent frame $(\xi_1,\xi_2,\dots,\xi_n)$ and the logarithmic cotangent frame $(\omega_1,\omega_2,\dots,\omega_n)$, the isomorphism $g_H^\sharp$ is represented by $[g_H^\sharp]=I_n$. More generally, for the tensor $g_{H,A}$ of Remark~\ref{rem:projective-general-matrix},
the induced map
$$
g_{H,A}^\sharp:T_{\PP^n}(-\log H)\longrightarrow \Om_{\PP^n}^1(\log H)\otimes \OO_{\PP^n}(2)
$$
is represented in these frames by the symmetric matrix $A$.
Hence $g_{H,A}^\sharp$ is an isomorphism if and only if $\det(A)\neq 0$.
\end{remark}

\begin{proposition}\label{prop:quadric-sharp}
Let $(X,\Delta)=(Q^n,\varnothing)$ be as in Proposition~\ref{prop:quadric-example}, with $N=\OO_{Q^n}(2)$, and let $g_Q\in H^0\bigl(Q^n,\Sym^2\Om_{Q^n}^1\otimes \OO_{Q^n}(2)\bigr)$
be the conformal tensor induced by the constant quadratic form on $\C^{n+2}$.
Then contraction with $g_Q$ induces an isomorphism $g_Q^\sharp:T_{Q^n}\xrightarrow{\sim}\Om_{Q^n}^1\otimes \OO_{Q^n}(2)$.
Locally, in coordinates $(u_1,\dots,u_n)$ on the quadric, the matrix of $g_Q^\sharp$ is the symmetric matrix $\left(\delta_{ij}+\frac{u_i u_j}{h^2}\right)_{1\le i,j\le n}$, where $u_{n+1}=h(u_1,\dots,u_n)$ is the local defining function from Remark~\ref{rem:quadric-local}.
\end{proposition}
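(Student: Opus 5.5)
The argument runs parallel to Proposition~\ref{prop:projective-sharp} and uses the local description from Remark~\ref{rem:quadric-local}. There are two separate claims: that $g_Q^\sharp$ is a global isomorphism, and that its matrix in the coordinates $(u_1,\dots,u_n)$ has the stated form.

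For the global statement I use Proposition~\ref{prop:quadric-example}. It gives $g_Q$ as a fiberwise nondegenerate section of $\Sym^2\Om_{Q^n}^1\otimes\OO_{Q^n}(2)$. Contraction with a symmetric form is a morphism of vector bundles $T_{Q^n}\to\Om_{Q^n}^1\otimes\OO_{Q^n}(2)$ between bundles of the same rank $n$. Its restriction to the fiber at $p=p(z)$ is the map $T_pQ^n\to T_p^*Q^n\otimes\OO(2)_p$ induced by the form $\beta$ on $T_z\widehat Q/\C z$. Nondegeneracy of that form says exactly that this linear map is injective, hence bijective. A bundle map that is an isomorphism on every fiber is an isomorphism, so $g_Q^\sharp$ is an isomorphism.

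For the local matrix I work on $U_{0,n+1}$ with the coordinates $(u_1,\dots,u_n)$ and the trivialization of $\OO_{Q^n}(2)$ by $z_0^2$. First I check that trivialization. In the affine chart, write $z=z_0(1,u_1,\dots,u_{n+1})$. Pulling back $\beta$ along the section $z_0=1$ gives $du_1^2+\cdots+du_{n+1}^2$, because the radial direction is killed in the quotient. This is the form $\widetilde g_Q$ of Remark~\ref{rem:quadric-local}. Substituting $dh$ from \eqref{eq:dh-on-quadric} gives the coefficients $g_{ij}=\delta_{ij}+u_iu_j/h^2$. For any symmetric form $\sum g_{ij}du_i\,du_j$, we have $g^\sharp(\partial/\partial u_j)=\sum_i g_{ij}\,du_i$. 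So in the dual frames $(\partial/\partial u_j)$ and $(du_i)$, the matrix of $g_Q^\sharp$ is $(g_{ij})$.

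As a consistency check, the determinant computed in Remark~\ref{rem:quadric-local} equals $-1/h^2$, which is nonzero on $U_{0,n+1}$. This confirms invertibility on that chart and agrees with the global argument. The global argument is still needed for the points where $u_{n+1}=0$, which this chart does not cover.

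The only delicate step is the identification of the trivialized tensor with $du_1^2+\cdots+du_{n+1}^2$, that is, the weight-two bookkeeping. Once the section $z_0=1$ is fixed, this is immediate.
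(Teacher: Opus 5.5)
Your proof is correct. The local part is the same computation as in the paper: you pull back $\beta$ along the section $z_0=1$, substitute \eqref{eq:dh-on-quadric}, and read off $g_Q^\sharp(\partial/\partial u_j)=\sum_i g_{ij}\,du_i$.

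The two arguments differ in the global step. The paper proves invertibility only through the determinant $-1/h^2$ on the chart $U_{0,n+1}$. It then writes ``hence everywhere'' without comment. That step tacitly needs an extra input, for example one of the following:
\begin{enumerate}
\item the $SO(n+2)$-homogeneity of $Q^n$;
\item the permutation symmetry of $q$, since the charts $\{z_iz_j\neq0\}$ with $i\neq j$ cover $Q^n$ (no point of $Q^n$ has exactly one nonzero coordinate);
\item the fiberwise nondegeneracy already established in Proposition~\ref{prop:quadric-example}.
\end{enumerate}
You take the third option explicitly. You deduce the global isomorphism from $\mathrm{rad}(\beta|_{T_z\widehat Q})=\C z$, use the chart only for the explicit matrix, and treat the determinant as a consistency check. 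You also rightly note that $U_{0,n+1}$ does not cover $Q^n$. Strictly, it misses not only the points with $u_{n+1}=0$ but also those with $z_0=0$.

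So your version is more complete than the paper's exactly where the paper is terse. The trade-off is that it relies on Proposition~\ref{prop:quadric-example} rather than staying self-contained in coordinates.
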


\begin{proof}
By Remark~\ref{rem:quadric-local}, on the chart $U_{0,n+1}\subset Q^n$ with coordinates $(u_1,\dots,u_n)$, the tensor $g_Q$ is represented, after trivialising $\OO_{Q^n}(2)$ by $z_0^2$, by
$$
\widetilde g_Q
=
\sum_{i,j=1}^n
\left(
\delta_{ij}+\frac{u_i u_j}{h^2}
\right)
du_i\,du_j.
$$
Hence, for a tangent vector field $\eta=\sum_{j=1}^n a_j\frac{\partial}{\partial u_j}$,
$$
g_Q^\sharp(\eta)
=
\sum_{i,j=1}^n
\left(
\delta_{ij}+\frac{u_i u_j}{h^2}
\right)
a_j\,du_i.
$$
Thus, in the coordinate frames $\left(\frac{\partial}{\partial u_1},\dots,\frac{\partial}{\partial u_n}\right)$ and $(du_1,\dots,du_n)$, the matrix of $g_Q^\sharp$ is $G(u):=\left(\delta_{ij}+\frac{u_i u_j}{h^2}\right)_{1\le i,j\le n}.$
Remark~\ref{rem:quadric-local} gives $\det G(u) = 1+\frac{u_1^2+\cdots+u_n^2}{h^2} = -\frac{1}{h^2}\neq 0.$
Hence $g_Q^\sharp$ is an isomorphism on this chart, hence everywhere.
\end{proof}

\begin{remark}\label{rem:quadric-inverse}
The inverse matrix of $G(u)=I_n+\frac{1}{h^2}u\,{}^t\!u$ is given by the rank-one perturbation formula $G(u)^{-1} = I_n-\frac{u\,{}^t\!u}{h^2+u_1^2+\cdots+u_n^2}.$
Since the quadric equation gives $h^2+u_1^2+\cdots+u_n^2=-1$, this becomes $G(u)^{-1}=I_n+u\,{}^t\!u$. Thus
$$
\sum_{k=1}^n
\left(\delta_{ik}+\frac{u_i u_k}{h^2}\right)
\left(\delta_{kj}+u_k u_j\right)
=
\delta_{ij}.
$$
That is, the inverse bundle isomorphism is
$$
(g_Q^\sharp)^{-1}:\Om_{Q^n}^1\otimes \OO_{Q^n}(2)\longrightarrow T_{Q^n}.
$$
\end{remark}

\begin{remark}\label{rem:null-quadric-examples}
In both basic examples the pointwise null cone can be written explicitly. On the affine chart $U_0=\PP^n\setminus H\simeq \A^n$, where $g_H=dz_1^2+\cdots+dz_n^2$, the null directions at a point are exactly $[a_1:\cdots:a_n]\in \PP(T_x\A^n)\simeq \PP^{n-1}
\,\text{such that}\, a_1^2+\cdots+a_n^2=0$. Thus the null quadric is the standard smooth quadric in $\PP^{n-1}$.
On $Q^n$ consider the chart of Remark~\ref{rem:quadric-local}. A tangent vector $v=\sum_{i=1}^n a_i\frac{\partial}{\partial u_i}$ is null if and only if $\widetilde g_Q(v,v)
=
\sum_{i,j=1}^n
\left(
\delta_{ij}+\frac{u_i u_j}{h^2}
\right)
a_i a_j
=
\sum_{i=1}^n a_i^2+\frac{1}{h^2}\left(\sum_{i=1}^n u_i a_i\right)^2
=0$. Thus again the null directions form a smooth quadric hypersurface in the projectivized tangent space. This is the local incarnation of the VMRT on the quadric.
\end{remark}

These examples realize the geometric patterns that occur in the main theorem. The two rank-one models give the rigid cases $(Q^n,\varnothing)$ and $(\PP^n,H)$, while Proposition~\ref{prop:abelian-projective-example} illustrates the fibrational geometry of the remaining alternative.

Indeed, if $(X,\Delta)=(Q^n,\varnothing)$ and $\ell\subset Q^n$ is a line, then Remark~\ref{rem:quadric-lines} shows that $T_{Q^n}|_\ell \simeq \OO_{\PP^1}(2)\oplus \OO_{\PP^1}(1)^{\oplus (n-2)}\oplus \OO_{\PP^1}.$
Since the boundary is empty, this coincides with
$$
T_{Q^n}(-\log\Delta)|_\ell \simeq \OO_{\PP^1}(2)\oplus \OO_{\PP^1}(1)^{\oplus (n-2)}\oplus \OO_{\PP^1}.
$$
This is the tangent splitting along lines in the quadric model.

If $(X,\Delta)=(\PP^n,H)$ and $\ell\subset \PP^n$ is a line not contained in $H$, then Remark~\ref{rem:projective-lines} gives $$
T_{\PP^n}(-\log H)|_\ell \simeq \OO_{\PP^1}(1)^{\oplus n},\, \delta=\deg\bigl((f^{-1}H)_{\mathrm{red}}\bigr)=1.
$$ Thus the projective-space example has the logarithmic splitting used in the conformal-degree-two branch. Proposition~\ref{prop:abelian-projective-example} gives the $s=1$ member of the split-null family, while Proposition~\ref{prop:multi-hyperplane-example} exhibits the corresponding models with several boundary hyperplanes.


Assume henceforth that $K_X+\Delta$ is not nef.
By the cone and contraction theorem, there exists a $(K_X+\Delta)$-negative extremal ray
$R\subset \NE(X)$ and a contraction $\phi_R:X\to Y$ contracting precisely the curves with class in $R$ \cite[Chapter 3]{KM98}. The simple normal crossing formulation is \cite[Theorem~1.4]{Langer24}.

\begin{definition}\label{def:length}
The length of $R$ is
$$
\ell(R):=\min\bigl\{-(K_X+\Delta)\cdot C : C\text{ is a rational curve with }[C]\in R\bigr\}.
$$
\end{definition}

\begin{definition}\label{def:unsplit}
Let $X$ be a projective variety and let $V\subset\Hom(\PP^1,X)$ be an irreducible family. We say that $V$ is \emph{unsplit} if the curves parametrized by $V$ do not degenerate to a $1$-cycle consisting of more than one curve, counting multiplicities, \cite[Chapter~IV, Definition~2.1]{Kollar96}, \cite[Section~1]{Langer24}.
\end{definition}

Let $\Delta=\sum_{i\in I}\Delta_i$. For effective divisors $G_i$ on $\PP^1$, we denote by
$\Hom(\PP^1,X;G\subset\Delta)$ the closed subscheme parametrizing morphisms $h:\PP^1\to X$ such that $G_i\le h^*\Delta_i$ for every $i$, \cite[Section~2.1]{Langer24}. If $G_i=f^*\Delta_i$ for all $i$, its Zariski tangent space at $[f]$ is $H^0(\PP^1,f^*T_X(-\log\Delta))$.
For a family $V$ we write $\locus(V)$ for the union of its curves and $\locus(V,x)$ for the union of members through $x$; if $U\subset X$ is open, set $\locus_U(V):=\locus(V)\cap U$ and $\locus_U(V,x):=\locus(V,x)\cap U$. After choosing $0\in\PP^1$ away from the prescribed boundary support, write $V_x:=\{[g]\in V:g(0)=x\}$. Finally, if $E\simeq\bigoplus_i\OO_{\PP^1}(a_i)$, set
$\rk_+(E):=\#\{i:a_i>0\}$.

\begin{lemma}\label{lem:unsplit-minimal}
Fix a rational curve $C$ with $[C]\in R$ and $-(K_X+\Delta)\cdot C=\ell(R)$, let
$f:\PP^1\to X$ be the normalization of $C$, and let $V$ be an irreducible component of $\Hom(\PP^1,X)$ containing $[f]$.
Then $V$ is unsplit.
\end{lemma}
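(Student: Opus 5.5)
The plan is the standard argument by minimality of degree. If the family $V$ degenerates to a $1$-cycle $\sum_j a_j C_j$ with more than one component (counted with multiplicity), then each $C_j$ lies in the ray $R$ and is rational with $-(K_X+\Delta)\cdot C_j<\ell(R)$, contradicting the definition of $\ell(R)$. The subtlety compared with the boundary-free case is that $K_X+\Delta$ need not be negative on every curve. So we cannot simply say that each component has positive anticanonical degree; instead we use extremality of $R$.

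The steps are as follows.

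\emph{Step 1: limits and extremality.} By Mori's bend-and-break formalism \cite[Chapter~II]{Kollar96}, any limit of members of $V$ in the Chow variety is a connected $1$-cycle $Z=\sum_j a_jC_j$ with $a_j\ge1$. Each $C_j$ is a rational curve, since a degeneration of rational curves has rational components. The cycle is numerically equivalent to $[C]$, so $\sum_j a_j[C_j]=[C]\in R$. Each $[C_j]$ lies in $\NE(X)$, and $R$ is an extremal ray: if a sum of effective classes lies in $R$, every summand does. Hence every $[C_j]\in R$.

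\emph{Step 2: sign on $R$.} Because $R$ is $(K_X+\Delta)$-negative, every nonzero class in $R$ has $-(K_X+\Delta)\cdot\gamma>0$. In particular $-(K_X+\Delta)\cdot C_j>0$ for all $j$. Note also that $C_j\neq0$ in $N_1(X)$, since $C_j$ is a curve on a projective variety and pairs positively with an ample divisor.

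\emph{Step 3: the degree count.} We have
$$\ell(R)=-(K_X+\Delta)\cdot C=\sum_j a_j\bigl(-(K_X+\Delta)\cdot C_j\bigr).$$
If $Z$ had more than one component counted with multiplicity, then $\sum_j a_j\ge2$ and all terms are positive. So some, indeed every, $C_j$ satisfies $-(K_X+\Delta)\cdot C_j<\ell(R)$. Since $C_j$ is a rational curve with class in $R$, this contradicts Definition~\ref{def:length}. Hence $V$ is unsplit in the sense of Definition~\ref{def:unsplit}.

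The only point requiring care, and the main obstacle, is Step 1. One must check that the limit cycle's components genuinely have classes in $R$, which uses extremality of $R$ in $\NE(X)$ rather than just $(K_X+\Delta)$-negativity. One must also check that the notion of degeneration in Definition~\ref{def:unsplit} matches limits in $\overline{V}$ in the Chow variety. I would cite \cite[Chapter~IV, 2.1--2.2]{Kollar96} for the latter. The rest is arithmetic. The logarithmic structure plays no role here: only the divisor $K_X+\Delta$ enters, and the argument is that of \cite[Section~1]{Langer24}.
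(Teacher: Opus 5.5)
Your argument is correct and is essentially the paper's proof: extremality of $R$ puts every component of the limit cycle in $R$, $(K_X+\Delta)$-negativity makes each contribution positive, and writing $\ell(R)$ as a sum of at least two positive pieces contradicts minimality. The only cosmetic difference is that the paper phrases the count using integrality of $(K_X+\Delta)\cdot C_i$ (since $K_X+\Delta$ is Cartier), whereas you correctly observe that positivity together with $\sum_j a_j\ge 2$ already forces $-(K_X+\Delta)\cdot C_j<\ell(R)$.
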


\begin{proof}
Assume, to the contrary, that $V$ is not unsplit.
By Definition~\ref{def:unsplit}, there exists a $1$-parameter degeneration of curves in $V$ whose limit cycle is reducible. In particular, after passing to the limit in the Chow variety, the cycle $C$ degenerates as $C\equiv C_1+\cdots+C_m$ with $m\ge 2$ and each $C_i$ an effective rational curve.

Since $[C]\in R$ and $R$ is an extremal ray, $[C_i]\in R$ for every $i=1,\dots,m$. Since $\Delta$ is reduced simple normal crossing, the divisor $K_X+\Delta$ is Cartier, hence $(K_X+\Delta)\cdot C_i\in \Z$. Moreover, since $(K_X+\Delta)\cdot R<0$, each number $-(K_X+\Delta)\cdot C_i$ is a positive integer. Hence $\ell(R)=-(K_X+\Delta)\cdot C
      =\sum_{i=1}^m\bigl(-(K_X+\Delta)\cdot C_i\bigr)$ expresses $\ell(R)$ as a sum of at least two positive integers.
Hence one of the summands is strictly smaller than $\ell(R)$, contradicting the definition of $\ell(R)$.
\end{proof}

\begin{proposition}\label{prop:A1-normal}
Assume that $K_X+\Delta$ is not nef, and let $R\subset \NE(X)$ be a $(K_X+\Delta)$-negative extremal ray. Let $C$ be a $(K_X+\Delta)$-minimal rational curve spanning $R$, and let $x\in C\cap (X\setminus \Delta)$.
Then there exists a morphism $g:\PP^1\to X$ such that $x\in g(\PP^1)$, $g_*[\PP^1]\in R$, and the reduced divisor $D_{\mathrm{red}}:=\bigl(g^{-1}\Delta\bigr)_{\mathrm{red}}$ is supported on at most one point. Moreover, $g$ can be chosen in the connected component of $\Hom(\PP^1,X)$ containing the normalization of $C$; in particular $-(K_X+\Delta)\cdot g_*[\PP^1]=\ell(R).$
Thus $g_*[\PP^1]$ is again minimal in $R$, and $\delta:=\deg D_{\mathrm{red}}\le1$.
\end{proposition}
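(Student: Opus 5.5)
We plan to obtain the $\A^1$-normal form by a bend-and-break argument along the boundary, in the form due to Langer. Let $f:\PP^1\to X$ be the normalization of $C$ and $V$ the irreducible component of $\Hom(\PP^1,X)$ containing $[f]$; by Lemma~\ref{lem:unsplit-minimal}, $V$ is unsplit. Write $f^*\Delta_i=G_i$ and consider the closed subscheme $\Hom(\PP^1,X;G\subset\Delta)$. Its Zariski tangent space at $[f]$ is $H^0(\PP^1,f^*T_X(-\log\Delta))$. By the usual Riemann--Roch estimate for deformations with boundary conditions, every component through $[f]$ has dimension at least $-(K_X+\Delta)\cdot C+n$. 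The strategy is to deform $f$, keeping $x=f(0)$ fixed and keeping the boundary contact conditions, in such a way that the distinct preimages of $\Delta$ collide.

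\textbf{Step 1 (fixing the point).} Fix $0\in\PP^1$ with $f(0)=x\notin\Delta$ and pass to the subfamily of morphisms with $h(0)=x$ and $G_i\le h^*\Delta_i$. Its dimension at $[f]$ is at least $-(K_X+\Delta)\cdot C$. The group $\Aut(\PP^1,0)$, of dimension $2$, acts on it, and we compare dimensions with the locus of images.

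\textbf{Step 2 (moving the boundary points).} Suppose $D_{\mathrm{red}}$ consists of $\delta\ge2$ points $p_1,\dots,p_\delta$. Consider the subfamily of reparametrized morphisms for which the tangency conditions are imposed at points $t_1,\dots,t_\delta$ that are allowed to vary. Since $V$ is unsplit, the curves through $x$ cannot break. By Mori's bend-and-break, applied in Langer's logarithmic version, a one-parameter family in which two boundary points approach each other while $x$ stays fixed must either break the curve or produce, in the limit, a morphism $g$ with fewer distinct boundary points.

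Breaking is excluded by unsplitness and by Lemma~\ref{lem:unsplit-minimal}. We iterate this until at most one boundary point remains. The limits stay in the same connected component of $\Hom(\PP^1,X)$ and have class $f_*[\PP^1]\in R$. Since the intersection number $-(K_X+\Delta)\cdot g_*[\PP^1]=\ell(R)$ is constant in flat families, $g$ is again minimal and $x\in g(\PP^1)$.

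\textbf{Main obstacle: the collision step.} One must show that the boundary points can actually be made to collide without the curve degenerating. We will try to invoke \cite[Section~2]{Langer24} directly, namely the result that a minimal rational curve of a $(K_X+\Delta)$-negative ray through a general point deforms to one meeting $\Delta$ in at most one point (its $\A^1$-curve statement). If we instead argue it by hand, we will use the following dimension count. The space of choices of $(t_1,\dots,t_\delta)$ modulo $\Aut(\PP^1,0)$ has positive dimension once $\delta\ge2$. The resulting compactness of the unsplit family then forces the limit to exist in the boundary stratum where some $t_j$ coincide, and the contact multiplicities add up there.

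Finally, we must check that the limit point still lies off $\Delta$ near $0$, so that $x\notin\Delta$ is preserved. This holds because the incidence conditions are closed and $x$ is fixed.
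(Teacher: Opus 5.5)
Your argument ultimately rests on the same step as the paper's proof. The paper applies \cite[Proposition~2.1]{Langer24} with $D=\Delta$ and $J=\varnothing$ to get $g$ through the given point $x$, inside the connected component of $\Hom(\PP^1,X)$ containing the normalization of $C$. It then uses that $\deg g^*A$ is constant on that component, and notes in addition that $g$ is birational onto its image by minimality. Make sure you quote that result for an arbitrary $x\in C\cap(X\setminus\Delta)$, as the statement requires, and not only for a general point as in your paraphrase.

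Your hand-made version of the collision step would not work as sketched, so it cannot replace the citation.

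\emph{The group action.} With $G_i=f^*\Delta_i$ fixed, $\Aut(\PP^1,0)$ does not act on the pointed family. Only the stabiliser of the divisors $G_i$ acts, and it is finite once $\delta\ge2$.

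\emph{Moving the $t_j$.} Letting the $t_j$ vary does not help either. Ordered configurations $(t_1,\dots,t_\delta)$ modulo $\Aut(\PP^1,0)$ have dimension $\delta-2$, which is $0$ for $\delta=2$.

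\emph{The mechanism that actually works.} Keep the contact points fixed on the domain.
\begin{enumerate}
\item The maps with $h(0)=x$ and $h^*\Delta_i\ge G_i$ form a family of dimension at least $\ell(R)\ge1$ at $[f]$.
\item For $\delta\ge2$, only finitely many of them share a given image curve.
\item On a one-parameter subfamily, the section $\{0\}\times\bar B$ of $\PP^1\times\bar B$ has self-intersection $0$ and is contracted to $x$. Hence $\PP^1\times\bar B\dashrightarrow X$ must have indeterminacy, and the domain necessarily bubbles.
\end{enumerate}
Unsplitness does not exclude this bubbling. It only guarantees that exactly one component $\Gamma$ of the limit is non-contracted, mapping birationally onto a curve of class $[C]$ through $x$. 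What collides are the images $h(p_j)$; on the domain, the $p_j$ move onto a contracted branch.

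\emph{The missing step.} One must still prove two things: that all the $p_j$ lie on a single contracted branch attached to $\Gamma$ at one point, and that the multiplicities of the pulled-back $\Delta_i$ concentrate at that point (for instance via Zariski's lemma on the resolved surface). Your sentences ``breaking is excluded by unsplitness'' and ``the contact multiplicities add up there'' stand in for exactly this argument, which is the real content of Langer's result.
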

\begin{proof}
Apply \cite[Proposition~2.1]{Langer24} with $D=\Delta$ and $J=\varnothing$. Then $D_J=X$ and $U_J=X\setminus\Supp\Delta$, so the proposition gives a morphism $g:\PP^1\to X$ through $x$, with $g_*[\PP^1]\in R$, such that $g^*\Delta$ is supported on at most one point.

In \cite[Proposition~2.1]{Langer24}, the morphism $g$ is constructed in the connected component $V\subset\Hom(\PP^1,X)$ containing the normalization $f$ of $C$. The degree of the pull-back of a line bundle is constant on $V$. Hence $\deg g^*A=\deg f^*A$ for every line bundle $A$ on $X$, so $g_*[\PP^1]\equiv C$ in $N_1(X)$ and $-(K_X+\Delta)\cdot g_*[\PP^1]=\ell(R).$
If $g$ had degree greater than one onto its image, the image curve would lie in $R$ and have strictly smaller $(K_X+\Delta)$-degree, contrary to the definition of $\ell(R)$. Hence $g$ is birational onto its image. Taking the reduced inverse image of $\Delta$ proves the assertion.
\end{proof}

\begin{proposition}\label{prop:Langer-estimates}
Let $f$ be the minimal morphism furnished by Proposition~\ref{prop:A1-normal}. Set $U:=X\setminus\Supp\Delta$ and $\delta:=\deg((f^{-1}\Delta)_{\rm red})\le1$.
\begin{enumerate}
\item If $\delta=1$ and $W$ is the irreducible component of $\Hom(\PP^1,X;f^*\Delta\subset\Delta)$ containing $[f]$, then $\dim W\ge n+\ell(R).$
If $W$ is unsplit, then for $x\in\locus_U(W)$,
$$
\dim W\le\dim\locus_U(W)+\dim\locus_U(W,x),\qquad
\dim\locus_U(W,x)\le\max_{[g]\in W_x}\rk_+\bigl(g^*T_X(-\log\Delta)\bigr).
$$
\item If $\delta=0$ and $V$ is the irreducible component of $\Hom(\PP^1,X)$ containing $[f]$, then $\dim V\ge n+\ell(R).$
If $V$ is unsplit, then for every $x\in\locus(V)\cap U$,
$$
\dim V\le\dim\locus(V)+\dim\locus(V,x)+1,
\qquad
\dim\locus(V,x)\le\max_{[g]\in V_x}\rk_+(g^*T_X).
$$
\end{enumerate}
\end{proposition}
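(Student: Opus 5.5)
The plan is to derive both parts from the standard deformation theory of rational curves: first for the logarithmic $\Hom$-scheme with prescribed boundary contact, as in \cite[Section~2]{Langer24}, and then for the ordinary $\Hom$-scheme, as in \cite[Chapter~II]{Kollar96}. I will combine this with the dimension count for families of curves through a point, as in \cite[Chapter~IV, Proposition~2.5]{Kollar96}.

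For the lower bounds, start with part (1). The component $W$ lies in $\Hom(\PP^1,X;f^*\Delta\subset\Delta)$. By \cite[Section~2.1]{Langer24}, its Zariski tangent space at $[f]$ is $H^0(\PP^1,f^*T_X(-\log\Delta))$, and its obstruction space lies in $H^1$ of the same bundle. So $\dim W\ge \chi(f^*T_X(-\log\Delta))=n+\deg f^*T_X(-\log\Delta)$. Since $\det T_X(-\log\Delta)=\OO_X(-(K_X+\Delta))$ (see the proof of Lemma~\ref{lem:det}), this degree is $-(K_X+\Delta)\cdot f_*[\PP^1]=\ell(R)$, by Proposition~\ref{prop:A1-normal}. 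Part (2) goes the same way with $\dim V\ge n+\deg f^*T_X=n-K_X\cdot C$. Here $\delta=0$ means $C$ does not meet $\Supp\Delta$, so $\Delta\cdot C=0$ and $-K_X\cdot C=\ell(R)$.

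For the upper bounds, use the evaluation map $F\colon \PP^1\times W\to X$, restricted over $U$. Fix $x\in\locus_U(W)$. Let $W_x$ be the morphisms sending the marked point $0$, chosen away from the boundary support, to $x$. Then $\dim W\le \dim\locus_U(W)+\dim W_x-1$. Here the $1$ comes from the one-dimensional freedom of the preimage of $x$ in $\PP^1$, counted by the fiber-dimension theorem applied to $F$. In the logarithmic case there is an extra constraint: the unique boundary point of $g$ is fixed as a point of $\PP^1$ in the $\Hom$-scheme. So the automorphisms of $\PP^1$ fixing $0$ and the boundary point form a one-dimensional group $\mathbb{G}_m$, and it acts on $W_x$ with finite stabilizers. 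Since $W$ is unsplit, the family of curves through $x$ is proper, by Mori's bend-and-break as in \cite[IV.2.5]{Kollar96}. Hence $\locus_U(W,x)$ has dimension $\dim W_x-1$, i.e. the dimension of the space of curves (not maps) through $x$ plus the one-dimensional image curve minus the group. This gives $\dim W\le\dim\locus_U(W)+\dim\locus_U(W,x)$. In the non-logarithmic case (2), the stabilizer of $0$ in $\Aut(\PP^1)$ is two-dimensional, which accounts for the extra $+1$.

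For the inequalities involving $\rk_+$, consider the evaluation map $W_x\to X$, $g\mapsto g(t)$, at a general $t$. Its differential at $[g]$ factors through the tangent space of $W_x$, which is $H^0(\PP^1,g^*T_X(-\log\Delta)(-0))$. The image of this tangent space under evaluation at $t$ is spanned by the positive summands. Concretely, $\OO(a)(-0)$ has sections nonvanishing at $t$ only when $a\ge1$. So the image of the map $\PP^1\times W_x\to\locus(W,x)$ has tangent dimension at most $\rk_+$, and generic smoothness bounds $\dim\locus_U(W,x)$ by $\max_{[g]\in W_x}\rk_+$. The same argument with $g^*T_X$ handles (2).

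The main obstacle is the careful dimension bookkeeping in the logarithmic case. One has to check that the boundary point is rigid in $W$, so that only a one-dimensional reparametrization group survives, and that unsplitness gives properness of $\locus(W,x)$ inside $U$ and not just in $X$. For both points I would rely on \cite[Section~2]{Langer24} rather than reproving them.
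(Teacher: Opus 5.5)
Your overall route is the paper's. The lower bounds come from $\chi$ of the (logarithmic) pulled-back tangent bundle, and the upper bounds are the evaluation-map counts plus bend-and-break that the paper takes directly from Propositions~2.3--2.5 of \cite{Langer24}. Your lower bounds are fine, and so is your $\rk_+$ bound via evaluating $H^0(\PP^1,g^*T_X(-\log\Delta)(-0))$ and generic smoothness. However, the count you give for $\dim W\le\dim\locus_U(W)+\dim\locus_U(W,x)$ is wrong in both intermediate claims. This is the step you yourself single out as the crux. The final inequality comes out right only because the two errors cancel.

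\textbf{A counterexample to both intermediate claims.} Take $(\PP^n,H)$, with $W$ the degree-one maps $g$ satisfying $g(p)\in H$. Then $\dim W=2n=n+\ell(R)$ and $\locus_U(W)=\locus_U(W,x)=\PP^n\setminus H$. Also $\dim W_x=n$: choose $g(p)\in H$, then a $\mathbb{G}_m$ of reparametrisations. Your claim $\dim W\le\dim\locus_U(W)+\dim W_x-1$ would give $2n\le 2n-1$. Your claim $\dim\locus_U(W,x)=\dim W_x-1$ would give $n=n-1$.

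\textbf{The correct bookkeeping.} First, $\Aut(\PP^1,p)$ preserves $W$ and acts transitively on $\PP^1\setminus\{p\}$. So the fiber of $F$ over $x\in U$ is $\{(t,g):g(t)=x\}$, of dimension $\dim W_x+1$. The fiber-dimension theorem then gives $\dim W+1\le\dim\locus_U(W)+\dim W_x+1$, that is, $\dim W\le\dim\locus_U(W)+\dim W_x$. The freedom in $t$ is cancelled by the $\PP^1$ factor of the source; it is not subtracted.

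Second, unsplitness and bend-and-break give only finitely many $W$-curves through $x$ and a general $y\in\locus_U(W,x)$. Hence the general fiber of $\PP^1\times W_x\to\locus(W,x)$ is a finite union of $\mathbb{G}_m$-orbits, so it is one-dimensional, and $\dim\locus_U(W,x)=\dim W_x$. In your version the $\mathbb{G}_m$ is subtracted twice: once when passing from maps to curves, and once more at the end. Combining the two correct statements gives the asserted inequality.

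In case (ii) the same reasoning yields $\dim V\le\dim\locus(V)+\dim V_x$ and $\dim\locus(V,x)=\dim V_x-1$. The second holds because the stabiliser of $0$ in $\Aut(\PP^1)$ is two-dimensional, and this is the real origin of the $+1$.

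Finally, you do not need properness of the family of curves through $x$ \emph{inside $U$}. What is used is finiteness of curves through two fixed points, and unsplitness in $X$ suffices for that.
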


\begin{proof}
For \textup{(i)}, the dimension estimate in \cite[Section~2.1]{Langer24} gives $\dim_{[f]}W\ge \chi(\PP^1,f^*T_X)-\deg(f^*\Delta) =n-(K_X+\Delta)\cdot f_*[\PP^1]=n+\ell(R).$
The subgroup of $\Aut(\PP^1)$ fixing the unique point of $\Supp(f^*\Delta)$ preserves $W$. When $W$ is unsplit, Propositions~2.4 and~2.5 of \cite{Langer24} give the two asserted upper bounds. For \textup{(ii)}, $f^*\Delta=0$, so $\dim_{[f]}V\ge\chi(\PP^1,f^*T_X)=n+\ell(R)$. Propositions~2.3 and~2.5 of \cite{Langer24} give the asserted upper bounds; for $x\in U$, every member of $V_x$ lies in $U$ since $\Delta\cdot R=0$.
\end{proof}


\section{Conformal degree along minimal rational curves}\label{sec:splitting}
Let $f:\PP^1\to X$ be the minimal morphism furnished by Proposition~\ref{prop:A1-normal}, and set
$E_f:=f^*T_X(-\log\Delta)$ and $d:=\deg(f^*N)$.

\begin{lemma}\label{lem:conformal-degree}
With the notation above, $d\in\{1,2\}$ and $n d=2\ell(R).$
If $E_f\simeq\bigoplus_{i=1}^n\OO_{\PP^1}(b_i)$ with $b_1\ge\cdots\ge b_n$, then $b_i+b_{n+1-i}=d\qquad(1\le i\le n).$
If $d=1$, then $n=2m$ is even and $\rk_+(E_f)=m$.
\end{lemma}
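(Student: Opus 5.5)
The identity $nd=2\ell(R)$ is immediate from Lemma~\ref{lem:det}: apply \eqref{eq:degN} to $f$ and use $-(K_X+\Delta)\cdot f_*[\PP^1]=\ell(R)$, which Proposition~\ref{prop:A1-normal} guarantees. Since $\ell(R)>0$, this gives $d>0$.

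\emph{The splitting relation.} Pull back the isomorphism \eqref{eq:sharp} along $f$ to get $E_f\simeq E_f^\vee\otimes\OO_{\PP^1}(d)$. By Grothendieck's theorem the splitting type is unique, so the multisets $\{b_i\}$ and $\{d-b_i\}$ coincide. The map $b\mapsto d-b$ reverses order. Hence, after sorting, the $i$-th largest entry $b_i$ equals $d-b_{n+1-i}$, which is the relation $b_i+b_{n+1-i}=d$. The same argument applies verbatim to $g^*T_X(-\log\Delta)$ for any morphism $g$ in the relevant family, because $\deg g^*N=d$ is constant on the component.

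\emph{The bound $d\le 2$.} This is the main step, and I would get it from the deformation estimates of Proposition~\ref{prop:Langer-estimates}. First, the family is unsplit. Lemma~\ref{lem:unsplit-minimal} gives this for $V$. For $W$ in the case $\delta=1$, note that $W$ sits inside a component of $\Hom(\PP^1,X)$, and its members have the same minimal numerical class, so the same integrality argument applies.
\begin{itemize}
\item If $\delta=1$, combine the lower bound $n+\ell(R)\le\dim W$ with $\dim\locus_U(W)\le n$ and $\dim\locus_U(W,x)\le\rk_+\le n$. This gives $\ell(R)\le n$, hence $d=2\ell(R)/n\le 2$.
\item If $\delta=0$, the same chain yields $\ell(R)\le n+1$, so $d\le 2+2/n<3$. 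Since $d$ is an integer, $d\le 2$.
\end{itemize}
The only delicate point I foresee is justifying that the unsplit hypothesis applies to $W$. If needed, one could instead bound $\ell(R)$ via $\rk_+$ directly. Together with $d>0$, this gives $d\in\{1,2\}$.

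\emph{The case $d=1$.} Here $b_i+b_{n+1-i}=1$. If $n$ were odd, the middle index $i=(n+1)/2$ would give $2b_i=1$, which is impossible. So $n=2m$ is even. For each $i\le m$, the pair $(b_i,b_{n+1-i})$ consists of integers summing to $1$ with $b_i\ge b_{n+1-i}$. This forces $b_i\ge1$ and $b_{n+1-i}\le0$. Exactly one entry of each of the $m$ pairs is positive, so $\rk_+(E_f)=m$.
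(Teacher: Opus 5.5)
Your proof is correct. For $nd=2\ell(R)$, the symmetry $b_i+b_{n+1-i}=d$, and the parity and $\rk_+$ statement when $d=1$, it matches the paper's argument. The two routes differ only in how you get $d\le 2$.

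The paper proves this bound in one line. It applies \cite[Proposition~1.5]{Langer24} with $J=\varnothing$, which gives a rational curve $\Gamma$ in $R$ with $0<-(K_X+\Delta)\cdot\Gamma\le n+1$. Hence $\ell(R)\le n+1$ and $d\le 2+2/n<3$. This is uniform in $\delta$ and never refers to the deformation family of $f$.

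You instead derive a length bound from the minimal family itself, using the dimension count of Proposition~\ref{prop:Langer-estimates}:
\begin{itemize}
\item For $\delta=0$ you recover the classical bound $\ell(R)\le n+1$.
\item For $\delta=1$, fixing the boundary point removes one more automorphism parameter. This gives the sharper bound $\ell(R)\le n$, which is the same count the paper performs later in Lemma~\ref{lem:delta1-covering}.
\end{itemize}
The cost is a case split on $\delta$ and the need for $W$ to be unsplit. Your justification of unsplitness is sound, and it is the argument the paper gives at the start of Lemma~\ref{lem:delta1-covering}. $W$ is irreducible and lies in $\Hom(\PP^1,X)$, hence in some irreducible component through $[f]$. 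That component is unsplit by Lemma~\ref{lem:unsplit-minimal}, and a subfamily of an unsplit family is unsplit. So the fallback you mention is not needed.

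In short, your route stays within the paper's own deformation estimates and anticipates the later count. The paper's route is shorter and does not use the unsplit machinery at all.
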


\begin{proof}
The first identity is Lemma~\ref{lem:det}. Applied with $J=\varnothing$, \cite[Proposition~1.5]{Langer24} says that $R$ contains a rational curve $\Gamma$ with
$0<-(K_X+\Delta)\cdot\Gamma\le n+1$. Hence $\ell(R)\le n+1$, and therefore
$0<d=2\ell(R)/n\le2+2/n<3$. Since $d$ is an integer, $d\in\{1,2\}$. Pulling back the conformal isomorphism $T_X(-\log\Delta)\simeq\Omega_X^1(\log\Delta)\otimes N$ gives $E_f\simeq E_f^\vee\otimes\OO_{\PP^1}(d)$. After ordering the splitting degrees, uniqueness of the Birkhoff--Grothendieck decomposition gives $b_i+b_{n+1-i}=d$. If $d=1$, an unpaired middle summand would have degree $1/2$; hence $n=2m$. In every pair $(b_i,b_{2m+1-i})$ the two integral degrees sum to one, so exactly one is positive. Hence $\rk_+(E_f)=m$.
\end{proof}

\section{Classification when \texorpdfstring{$K_X+\Delta$}{KX+Delta} is not nef}\label{sec:nonnef}
Let $R\subset\NE(X)$ be the fixed $(K_X+\Delta)$-negative extremal ray, and let $f:\PP^1\to X$ be the minimal morphism supplied by Proposition~\ref{prop:A1-normal}. Set $\delta:=\deg((f^{-1}\Delta)_{\rm red})\le1$.

\subsection*{The case $\delta=1$}
Write $(f^{-1}\Delta)_{\rm red}=p$. Let $\Delta_1,\dots,\Delta_s$ be the components of $\Delta$ through $f(p)$. In simple normal crossing coordinates $\Delta=\{x_1\cdots x_s=0\}$, write $f^*\Delta_i=a_i p$, $a_i\ge1$. Thus $f^*\Delta=(a_1+\cdots+a_s)p,\qquad \Delta\cdot f_*[\PP^1]=a_1+\cdots+a_s.$
In particular, $\delta=1$ does not imply $s=1$ or $\Delta\cdot f_*[\PP^1]=1$.

\begin{lemma}\label{lem:pullback-log-seq}
There is a natural exact sequence
$$
0\to f^*T_X(-\log\Delta)\to f^*T_X\to\bigoplus_{i=1}^s\OO_{a_i p}\to0.
$$
\end{lemma}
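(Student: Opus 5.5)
The plan is to compute locally at $f(p)$ and then globalize using the fact that $f^{-1}\Delta$ is supported only at $p$. Start from the standard residue sequence on $X$ for a simple normal crossing divisor,
$$
0\to T_X(-\log\Delta)\to T_X\to\bigoplus_{i\in I}\OO_{\Delta_i}(\Delta_i)\to0,
$$
where the right-hand map sends a vector field to its normal component along each $\Delta_i$. Locally, with $\Delta=\{x_1\cdots x_k=0\}$, the bundle $T_X(-\log\Delta)$ has frame $x_1\partial_1,\dots,x_k\partial_k,\partial_{k+1},\dots,\partial_n$, so the quotient is $\bigoplus_{i\le k}\OO_X/(x_i)$.

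Next, pull this sequence back by $f$. Pullback is right exact, so we get
$$
f^*T_X(-\log\Delta)\to f^*T_X\to\bigoplus_i f^*\OO_{\Delta_i}(\Delta_i)\to0.
$$
Since $f(\PP^1)\not\subset\Delta$, each $f^*\OO_{\Delta_i}(\Delta_i)$ is the torsion sheaf $\OO_{f^*\Delta_i}$ (twisted by a line bundle, which is trivial on a zero-dimensional scheme). For components $\Delta_i$ not passing through $f(p)$ we have $f^*\Delta_i=0$, because $\Supp f^*\Delta=\{p\}$, so those terms vanish. For $i\le s$ the term is $\OO_{a_ip}$. This produces the right-hand part of the claimed sequence.

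Injectivity on the left comes from a local computation near $p$. With local parameter $t$ at $p$, the map $f^*T_X(-\log\Delta)\to f^*T_X$ is given in the frames above by the diagonal matrix $\mathrm{diag}(x_1\circ f,\dots,x_s\circ f,1,\dots,1)$. Its entries are $x_i\circ f=u_it^{a_i}$ with $u_i$ units, so the determinant is $t^{\sum a_i}$ up to a unit, which is nonzero. Away from $p$ the map is an isomorphism. A map of vector bundles of equal rank with nonzero determinant is injective, and its cokernel is $\bigoplus_{i\le s}\OO_{\PP^1}/(t^{a_i})=\bigoplus\OO_{a_ip}$. This local description also confirms exactness in the middle directly, and with it we are done.

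As a consistency check, degrees match: $\deg f^*T_X-\deg f^*T_X(-\log\Delta)=\Delta\cdot f_*[\PP^1]=\sum a_i$. The only delicate point is left exactness after pullback, since $\mathrm{Tor}_1$ could a priori contribute. The determinant computation handles this, because $f$ is not contained in $\Delta$ and the map is generically an isomorphism.
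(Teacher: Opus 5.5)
Your proof is correct and follows the same route as the paper: in the snc frame $x_1\partial_{x_1},\dots,x_s\partial_{x_s},\partial_{x_{s+1}},\dots,\partial_{x_n}$, with $f^*x_i=u_it^{a_i}$, the map is diagonal and its cokernel at $p$ is $\bigoplus_i\OO_{\PP^1,p}/(t^{a_i})$, while the two bundles coincide away from $p$. Your residue-sequence framing and the determinant check for injectivity only spell out details the paper's terse local argument leaves implicit.
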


\begin{proof}
Locally $T_X(-\log\Delta)$ is generated by $x_i\partial_{x_i}$ for $1\le i\le s$ and by $\partial_{x_j}$ for $j>s$. If $t$ is a coordinate at $p$, then, after multiplying the $x_i$ by units, write $f^*x_i=t^{a_i}$. The quotient is therefore $\bigoplus_i\OO_{\PP^1,p}/(t^{a_i})$, and away from $p$ the logarithmic and ordinary tangent bundles coincide.
\end{proof}

\begin{lemma}\label{lem:delta1-covering}
Let $d:=\deg(f^*N)$ and let $W\subset\Hom(\PP^1,X;f^*\Delta\subset\Delta)$ be the component containing $[f]$. Then $W$ is unsplit and covering. More precisely:
\begin{enumerate}
\item if $d=2$, then $\dim W=2n$, and for a general member $g$ of $W$,
$g^*T_X(-\log\Delta)\simeq\OO_{\PP^1}(1)^{\oplus n}$;
\item if $d=1$, then $n=2m$, $\dim W=3m$, and for general $x\in U:=X\setminus\Supp\Delta$, $\dim\locus_U(W)=2m,\qquad \dim\locus_U(W,x)=m.$
\end{enumerate}
\end{lemma}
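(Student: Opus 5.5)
Unsplitness of $W$ will come from Lemma~\ref{lem:unsplit-minimal}: $W$ sits inside a component of $\Hom(\PP^1,X)$ containing $[f]$, every member has class $f_*[\PP^1]\in R$ with $-(K_X+\Delta)$-degree $\ell(R)$, and any degeneration into a reducible cycle would produce a rational curve in $R$ of smaller degree. By Lemma~\ref{lem:conformal-degree}, $nd=2\ell(R)$, so Proposition~\ref{prop:Langer-estimates}(i) gives $\dim W\ge n+\ell(R)=n+nd/2$. For the upper bound I will use
\[
\dim W\le \dim\locus_U(W)+\dim\locus_U(W,x)\le n+\max_{[g]\in W_x}\rk_+\bigl(g^*T_X(-\log\Delta)\bigr).
\]
For every $[g]\in W$, the pullback $g^*T_X(-\log\Delta)$ satisfies the self-duality of Lemma~\ref{lem:conformal-degree}, because $g^*N$ has degree $d$ on the whole connected component. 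Its splitting degrees therefore satisfy $b_i+b_{n+1-i}=d$.

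For $d=1$ this gives $n=2m$ and $\rk_+=m$ for every member, so $\dim W\le 2m+m=3m$. The lower bound is $n+\ell(R)=2m+m=3m$, so equality holds. Equality then propagates through the chain: $\dim\locus_U(W)=2m$, so $W$ is covering, and $\dim\locus_U(W,x)=m$ for general $x\in U$.

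For $d=2$ we have $\ell(R)=n$, so $\dim W\ge 2n$, while $\rk_+\le n$ gives $\dim W\le 2n$. Hence $\dim W=2n$, $W$ is covering, and $\dim\locus_U(W,x)=n$, meaning the curves through a general $x$ sweep out $X$. To obtain $g^*T_X(-\log\Delta)\simeq\OO(1)^{\oplus n}$ for general $g$, I will argue as follows. With $d=2$ the degrees pair as $b_i+b_{n+1-i}=2$. If the splitting is not $\OO(1)^{\oplus n}$, some pair is $(b,2-b)$ with $b\ge2$, and then $2-b\le0$, so $\rk_+<n$ for that member. If this held for general $g\in W_x$, the tangent-space bound would give $\dim\locus_U(W,x)<n$, contradicting the equality just obtained.

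That last step is the main obstacle, because the estimate in Proposition~\ref{prop:Langer-estimates} uses a maximum over $W_x$ rather than a value at a general member. The point I must secure is that the differential of the evaluation map $W_x\to X$ at a general point has image controlled by $H^0(g^*T_X(-\log\Delta)(-1))$, whose dimension is the number of positive summands, $\rk_+$. By generic smoothness, a locus of dimension $n$ forces this rank to equal $n$ at general $[g]$. I would first try to extract this directly from the proof of \cite[Proposition~2.5]{Langer24}. Failing that, I would use upper semicontinuity of the splitting type: a non-generic splitting occurs only in codimension $\ge1$, and rerunning the dimension count on that closed subfamily shows it cannot dominate $X$.
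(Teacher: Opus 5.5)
Your proposal is correct and follows the paper's proof essentially step for step: unsplitness from Lemma~\ref{lem:unsplit-minimal}, the lower bound $n+\ell(R)=n+nd/2$ from Proposition~\ref{prop:Langer-estimates}(i), the matching upper bound through $\rk_+$, and self-duality giving $\rk_+=m$ when $d=1$. The only cosmetic difference is that you deduce $\OO(1)^{\oplus n}$ from $b_i+b_{n+1-i}=2$, while the paper uses the total degree $\ell(R)=n$; and the obstacle you flag is milder than you fear: the chain of equalities already forces $\max_{[g]\in W_x}\rk_+=n$, and $\rk_+\bigl(g^*T_X(-\log\Delta)\bigr)=n$ is equivalent to the open condition $H^1\bigl(\PP^1,g^*T_X(-\log\Delta)(-2)\bigr)=0$ on the irreducible $W$, so it holds for a general member (this is the correct form of your semicontinuity fallback, which the paper phrases via generic smoothness and openness of maximal rank).
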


\begin{proof}
Let $V$ be an irreducible component of $\Hom(\PP^1,X)$ containing $W$. Minimality gives that $V$ is unsplit by Lemma~\ref{lem:unsplit-minimal}, hence so is $W$. Proposition~\ref{prop:Langer-estimates}\textup{(i)} gives $\dim W\ge n+\ell(R)=n+\frac n2d.$
If $d=2$, this lower bound is $2n$, while Langer's upper bounds give $\dim W\le\dim\locus_U(W)+\max\rk_+\le n+n=2n.$
Thus equality holds. In particular $\dim\locus_U(W)=n$, so $W$ is covering, and for general $x$ one also has $\dim\locus_U(W,x)=n$. By generic smoothness and \cite[Proposition~2.5]{Langer24}, the pointed evaluation has differential of rank $n$ at a general point of $W_x$. Maximal rank is an open condition, hence $\rk_+(g^*T_X(-\log\Delta))=n$ for a general member $g$. Moreover $\deg g^*T_X(-\log\Delta)=-(K_X+\Delta)\cdot g_*[\PP^1]=\ell(R)=n.$
All $n$ summands are positive and their degrees add to $n$, so each has degree one.

If $d=1$, Lemma~\ref{lem:conformal-degree} gives $n=2m$ and $\rk_+(g^*T_X(-\log\Delta))=m$ for every $g$ in the component. Hence $3m\le\dim W\le\dim\locus_U(W)+\dim\locus_U(W,x)\le2m+m,$
and equality holds throughout. Hence $W$ is covering and $\dim W=3m$. Moreover $\dim\locus_U(W)=2m$ and $\dim\locus_U(W,x)=m$.
\end{proof}

\begin{lemma}\label{lem:delta1-anticanonical}
Assume $\delta=1$ and $\deg(f^*N)=2$. Then $R$ is $K_X$-negative, $f_*[\PP^1]$ is $K_X$-minimal in $R$, and $-K_X\cdot f_*[\PP^1]=n+1,\qquad \Delta\cdot f_*[\PP^1]=1.$
\end{lemma}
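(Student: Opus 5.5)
We want to show: if $\delta=1$ and $d=2$, then $-K_X\cdot f_*[\PP^1]=n+1$ and $\Delta\cdot f_*[\PP^1]=1$. The plan is to compare the logarithmic splitting type from Lemma~\ref{lem:delta1-covering}(i) with the ordinary tangent bundle, using the exact sequence of Lemma~\ref{lem:pullback-log-seq}.

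First, Lemma~\ref{lem:conformal-degree} gives $\ell(R)=nd/2=n$. Then Lemma~\ref{lem:delta1-covering}(i) says that for a general member $g\in W$ we have $g^*T_X(-\log\Delta)\simeq\OO_{\PP^1}(1)^{\oplus n}$. Since $W$ lies in a single component and the intersection numbers $\Delta_i\cdot g_*[\PP^1]$ are constant there, we may replace $f$ by such a general $g$. The multiplicities $a_i$ and the number $s$ of components at the contact point keep the same total $\sum a_i=\Delta\cdot f_*[\PP^1]$, because $W$ imposes $f^*\Delta\le g^*\Delta$ and the degrees agree.

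Next, Lemma~\ref{lem:pullback-log-seq} gives
$$0\to\OO_{\PP^1}(1)^{\oplus n}\to g^*T_X\to\textstyle\bigoplus_{i=1}^s\OO_{a_ip}\to 0.$$
Write $a:=\sum a_i=\Delta\cdot g_*[\PP^1]\ge1$. Then $-K_X\cdot g_*[\PP^1]=\deg g^*T_X=n+a>0$, so $R$ is $K_X$-negative. It remains to show $a=1$.

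The tangent map $T_{\PP^1}\to g^*T_X$ is nonzero. Its image is not contained in the logarithmic subsheaf near $p$: the curve meets $\Delta$ at $p$ only, so it is not tangent to $\Delta$ along a component. More precisely, $g^*T_X$ contains the saturation of $T_{\PP^1}\simeq\OO(2)$. For the bound on $a$ I would instead use the $K_X$-side. The class $g_*[\PP^1]$ lies in $R$, and its deformations in $\Hom(\PP^1,X)$ without boundary conditions form a family $V\supset W$. By Lemma~\ref{lem:unsplit-minimal}, $V$ is unsplit, and by Lemma~\ref{lem:delta1-covering}, $\locus(V)\supseteq\locus(W)$ is dense. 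For a general point $x$, the standard bend-and-break inequality for an unsplit covering family (Proposition~\ref{prop:Langer-estimates}(ii), applied to $V$) gives
$$\dim V\le \dim X+\dim\locus(V,x)+1\le 2n+1.$$
On the other hand, $\dim V\ge -K_X\cdot g_*[\PP^1]+n=2n+a$. Hence $a\le1$, so $a=1$ and $-K_X\cdot g_*[\PP^1]=n+1$.

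Finally, $K_X$-minimality follows. Any rational curve $\Gamma$ with $[\Gamma]\in R$ has $[\Gamma]=\lambda[g_*\PP^1]$ with $\lambda\ge1$, by $(K_X+\Delta)$-minimality and the integrality of $\ell(R)$. Therefore $-K_X\cdot\Gamma=\lambda(n+1)\ge n+1$.

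The main obstacle is justifying Proposition~\ref{prop:Langer-estimates}(ii) for $V$ when the general member meets $\Delta$. That statement is phrased for $\delta=0$, and its proof used $\Delta\cdot R=0$ to keep pointed members inside $U$. Here I would instead invoke the classical Mori/Kollár bound $\dim V_x\le -K_X\cdot C+1$ for unsplit families through a general point, that is, $\dim\locus(V,x)\le n$. I would take care that $x$ is general in $X$, which is possible because $V$ is covering; then no boundary control is needed.
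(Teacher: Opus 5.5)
Your proposal is correct, but it proves $\Delta\cdot f_*[\PP^1]=1$ by a different route from the paper.

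\textbf{The paper's route.} The paper first proves $K_X$-minimality. Any rational curve $\Gamma$ in $R$ has $[\Gamma]=\lambda f_*[\PP^1]$ with $\lambda\ge1$, which is your last step. It notes that $R$ is $K_X$-negative because $-K_X\cdot f_*[\PP^1]=n+q$ with $q=\Delta\cdot f_*[\PP^1]\ge1$. It then quotes Mori's length bound: the $K_X$-negative extremal ray $R$ contains a rational curve $\Gamma$ with $-K_X\cdot\Gamma\le n+1$. Minimality then gives $n+q\le n+1$.

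\textbf{Your route.} You rerun the proof of that length bound directly on the component $V\ni[f]$ of $\Hom(\PP^1,X)$. Unsplitness from Lemma~\ref{lem:unsplit-minimal}, together with bend-and-break, gives $\dim V\le 2n+1$. The deformation lower bound gives $\dim V\ge n-K_X\cdot f_*[\PP^1]=2n+q$. Together these force $q=1$.

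\textbf{Comparison.} Your argument is self-contained. It avoids the cone theorem for $K_X$ and does not need $K_X$-minimality as an intermediate step. It also forces equality throughout, so $\dim\locus(V)=\dim\locus(V,x)=n$. That is exactly the input for the characterisation of $\PP^n$ used in Theorem~\ref{thm:Pn}. The paper's proof is shorter because it treats the length bound as a black box.

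Two of your steps are unnecessary, though harmless. You do not need to pass to a general $g\in W$ with $g^*T_X(-\log\Delta)\simeq\OO(1)^{\oplus n}$. The abandoned discussion of the tangent map can also go. You only use $\ell(R)=n$, $q\ge1$ (since $f$ meets $\Delta$ and passes through a point of $X\setminus\Delta$), and the unsplitness of $V$.

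\textbf{A slip in your last paragraph.} The classical input is misstated there. ``$\dim V_x\le -K_X\cdot C+1$'' is not the relevant bound, and ``$\dim\locus(V,x)\le n$'' is trivial, so neither justifies your display. What you need is the following. For unsplit $V$ and general $x\in\locus(V)$, one has $\dim V=\dim\locus(V)+\dim V_x$ from the evaluation map $V\times\PP^1\to X$. One also has $\dim V_x=\dim\locus(V,x)+1$, because bend-and-break gives only finitely many $V$-curves through $x$ and a second point of $\locus(V,x)$. Equivalently, use the classical estimate $\dim\locus(V,x)\ge -K_X\cdot C-1$. Combined with the trivial bound $\dim\locus(V,x)\le n$, this gives $\dim V\le 2n+1$. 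You are right that this step needs no control of the boundary.
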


\begin{proof}
Here $\ell(R)=n$. Set $q:=\Delta\cdot f_*[\PP^1]\ge1$. Then $-K_X\cdot f_*[\PP^1]=n+q$. If $\Gamma$ is another rational curve in $R$, write $[\Gamma]=a\,f_*[\PP^1]$ in $N_1(X)$. Since $-(K_X+\Delta)\cdot\Gamma=an$ is a positive integer and $n=\ell(R)$, $a\ge1$; hence $f_*[\PP^1]$ is also $K_X$-minimal. Mori's length bound gives $n+q\le n+1$, so $q=1$.
\end{proof}

\begin{corollary}\label{cor:delta1-TX-split}
Under the hypotheses of Lemma~\ref{lem:delta1-anticanonical}, for a general member $f$ of $W$, $f^*T_X\simeq\OO_{\PP^1}(2)\oplus\OO_{\PP^1}(1)^{\oplus(n-1)}.$
\end{corollary}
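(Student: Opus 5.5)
The plan is to compare $f^*T_X$ with the logarithmic tangent bundle $f^*T_X(-\log\Delta)$, using the exact sequence of Lemma~\ref{lem:pullback-log-seq} together with the splitting type $f^*T_X(-\log\Delta)\simeq\OO_{\PP^1}(1)^{\oplus n}$ given by Lemma~\ref{lem:delta1-covering}\textup{(i)} for a general member of $W$. By Lemma~\ref{lem:delta1-anticanonical} we know $\Delta\cdot f_*[\PP^1]=1$. Since $\sum a_i=1$ with all $a_i\ge1$, this forces $s=1$ and $a_1=1$. Thus $f$ meets $\Delta$ transversally at a single point $p$, lying on a single component $\Delta_1$. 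The sequence then reads
\[
0\to\OO_{\PP^1}(1)^{\oplus n}\to f^*T_X\to\OO_p\to0,
\]
so $f^*T_X$ has degree $n+1$. This agrees with $-K_X\cdot f_*[\PP^1]=n+1$.

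Next I would pin down the splitting type. Since $f^*T_X$ contains $\OO(1)^{\oplus n}$ with torsion quotient of length one, every summand of $f^*T_X$ has degree $\ge1$. Write $f^*T_X\simeq\bigoplus\OO(c_i)$ with $c_i\ge1$ and $\sum c_i=n+1$. The only possibility is $\OO(2)\oplus\OO(1)^{\oplus(n-1)}$, because a type $\OO(1)^{\oplus n}$ would have degree $n$. To justify that all $c_i\ge1$, take any quotient $f^*T_X\to\OO(c)$. Its restriction to the subsheaf $\OO(1)^{\oplus n}$ is nonzero, since the quotient by that subsheaf is torsion. A nonzero map $\OO(1)^{\oplus n}\to\OO(c)$ forces $c\ge1$.

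As a consistency check, I would also confirm this via the tangent direction. The differential $T_{\PP^1}=\OO(2)\to f^*T_X$ is injective, and for a birational minimal curve it is standard that $f^*T_X$ has an $\OO(2)$ summand. This matches the count, though the argument above does not depend on it.

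The main obstacle is ensuring that the general member $f$ of $W$ simultaneously has logarithmic splitting type $\OO(1)^{\oplus n}$ and still satisfies $\delta=1$ with the same intersection data. Both are open conditions on $W$, since $W$ parametrizes morphisms with $f^*\Delta$ fixed. The intersection number $\Delta\cdot f_*[\PP^1]=1$ is numerical and constant on $W$, so the argument applies to a general member.
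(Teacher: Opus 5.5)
Your proposal is correct and follows the paper's proof. Both arguments use Lemma~\ref{lem:delta1-covering}(i) and the sequence of Lemma~\ref{lem:pullback-log-seq}, whose quotient is a length-one skyscraper because $\Delta\cdot f_*[\PP^1]=1$ forces $s=a_1=1$. Both then exclude summands of degree $\le 0$, since projecting onto such a summand would kill $\OO(1)^{\oplus n}$, and conclude from the degree count $n+1$; your remark that this count alone forces the $\OO(2)$ summand, so the paper's appeal to $df\neq0$ is redundant, is accurate.
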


\begin{proof}
By Lemma~\ref{lem:delta1-covering}\textup{(i)}, $f^*T_X(-\log\Delta)\simeq\OO(1)^{\oplus n}$. Lemma~\ref{lem:delta1-anticanonical} and Lemma~\ref{lem:pullback-log-seq} give
$0\to\OO(1)^{\oplus n}\to f^*T_X\to k(p)\to0$, so $\deg f^*T_X=n+1$. If $f^*T_X=\bigoplus_i\OO(d_i)$ and some $d_i\le0$, the projection onto that summand annihilates $\OO(1)^{\oplus n}$, forcing the latter to inject generically into a bundle of rank $n-1$, a contradiction. Thus all $d_i\ge1$, while $df\ne0$ gives some $d_i\ge2$; the degree forces the displayed splitting.
\end{proof}

\begin{theorem}\label{thm:Pn}
If $\delta=1$ and $\deg(f^*N)=2$, then $X\simeq\PP^n$ and $\Delta$ is a hyperplane.
\end{theorem}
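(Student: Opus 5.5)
The plan is to reduce to a characterization of projective space via minimal rational curves. By Lemma~\ref{lem:delta1-anticanonical} and Corollary~\ref{cor:delta1-TX-split}, the family $W$ of Lemma~\ref{lem:delta1-covering}\textup{(i)} consists of $K_X$-minimal rational curves with $-K_X\cdot f_*[\PP^1]=n+1$, and a general member is standard of type $f^*T_X\simeq\OO(2)\oplus\OO(1)^{\oplus(n-1)}$. Let $V\supset W$ be the irreducible component of $\Hom(\PP^1,X)$ containing $[f]$. Since $R$ is extremal and the class is $(K_X+\Delta)$-minimal, $V$ is unsplit by Lemma~\ref{lem:unsplit-minimal}. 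Moreover $V$ is covering, because $W$ already is, and its anticanonical degree is $n+1$.

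\textbf{Step 1: identify $X$.} Apply the Cho--Miyaoka--Shepherd-Barron / Kebekus characterization: a smooth projective $X$ carrying a covering unsplit family of rational curves of anticanonical degree $n+1$ is isomorphic to $\PP^n$, and the curves are lines. Concretely, $\dim V=-K_X\cdot C+n=2n+1$, so for general $x$ the family $\locus(V,x)$ has dimension $n$ by Proposition~\ref{prop:Langer-estimates}\textup{(ii)}. Hence $\locus(V,x)=X$, the variety of minimal rational tangents at $x$ is all of $\PP(T_xX)$, and $X\simeq\PP^n$ with $V$ the family of lines. If a citation is preferred, I would invoke Kebekus' refinement of Cho--Miyaoka--Shepherd-Barron, which only needs unsplitness and degree $n+1$ at a general point.

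\textbf{Step 2: identify $\Delta$.} On $\PP^n$ the curves of $V$ are lines, and Lemma~\ref{lem:delta1-anticanonical} gives $\Delta\cdot\ell=1$. Hence $\Delta$ is a reduced divisor of degree $1$, i.e.\ a hyperplane. This is consistent with $\Delta$ being simple normal crossing, and it also forces $s=1$ and $a_1=1$ in the local description. As a cross-check, $T_{\PP^n}(-\log H)\simeq\OO(1)^{\oplus n}$ by \eqref{eq:TPn-log-splitting}, matching Lemma~\ref{lem:delta1-covering}\textup{(i)}, and $N\simeq\OO(2)$ by Lemma~\ref{lem:det}, since $\mathrm{Pic}$ is torsion-free.

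\textbf{Main obstacle.} The main obstacle is Step 1: verifying that the hypotheses of the characterization theorem really hold for the unconstrained family $V$, rather than for the boundary-constrained family $W$. Two points need checking. First, the degree bound $-K_X\cdot C=n+1$ must apply to every member of $V$. This holds because the numerical class is constant on $V$ and equals $f_*[\PP^1]$. Second, unsplitness must be taken in the $K_X$ sense. Any degeneration would split the class inside $R$ into pieces with positive $-(K_X+\Delta)$-degree, each at least $\ell(R)=n$, which is impossible. So $V$ is unsplit, and the theorem applies.
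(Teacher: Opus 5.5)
Your proposal is correct and follows the paper's route: a covering unsplit family of anticanonical degree $n+1$ with an $(n-1)$-dimensional family of curves through a general point, then the Kebekus/Cho--Miyaoka--Shepherd-Barron characterization, then $\Delta\cdot C=1$ forcing a hyperplane. The only differences are that the paper gets the $(n-1)$-dimensional family through $x$ from the splitting $f^*T_X\simeq\OO(2)\oplus\OO(1)^{\oplus(n-1)}$ of Corollary~\ref{cor:delta1-TX-split} (you use the dimension count for the unsplit component $V$ instead), and that your citation of Proposition~\ref{prop:Langer-estimates}\textup{(ii)} is misplaced, since it is stated for $\delta=0$ and its proof uses $\Delta\cdot R=0$; the bound $\dim V\le\dim\locus(V)+\dim\locus(V,x)+1$ should instead be taken from the general estimate for unsplit families (Koll\'ar, Ch.~IV), which applies here together with $\dim_{[f]}\Hom(\PP^1,X)\ge -K_X\cdot C+n$.
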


\begin{proof}
Lemma~\ref{lem:delta1-covering} gives a dominating unsplit family, hence a proper family of rational curves in the Chow variety. For a general member through a general point $x$, Corollary~\ref{cor:delta1-TX-split} gives $f^*T_X\simeq\OO(2)\oplus\OO(1)^{\oplus(n-1)}.$
Consequently $H^1(\PP^1,f^*T_X(-1))=0$ and $h^0(\PP^1,f^*T_X(-1))=n+1$. Quotienting the pointed morphisms by $\Aut(\PP^1,0)$ leaves an $(n-1)$-dimensional family of curves through $x$. The characterisations of projective space in \cite[Theorem~1.1]{Keb02} and \cite[Theorem~0.1]{CMSB02} therefore give $X\simeq\PP^n$. By Lemma~\ref{lem:delta1-anticanonical}, a general member has anticanonical degree $n+1$ and is therefore a line in $\PP^n$; since $\Delta\cdot C=1$, the divisor $\Delta$ has degree one and is a hyperplane.
\end{proof}

\begin{theorem}\label{thm:third-branch}
Assume $\delta=1$ and $\deg(f^*N)=1$. Let $\phi_R:X\to Y$ be the contraction of $R$ and set $b:=\dim Y$. Then $n=2m$, $\phi_R$ is of fiber type, and one of the following occurs.
\begin{enumerate}
\item $b>0$. For some $1\le s\le m$, $b=m-s+1,$
and for a general fiber $F$, $(F,\Delta|_F,N|_F)\simeq (\PP^{m+s-1},H_1+\cdots+H_s,\OO_{\PP^{m+s-1}}(1)),$
where the $H_i$ are hyperplanes in general position. A general minimal curve $C\subset F$ is a line and $s=\Delta\cdot C$. Along such a line,
$$
T_X(-\log\Delta)|_C\simeq\OO(1)^{\oplus m}\oplus\OO^{\oplus m},\qquad
T_X|_C\simeq\OO(2)\oplus\OO(1)^{\oplus(m+s-2)}\oplus\OO^{\oplus(m-s+1)}.
$$
After shrinking $Y$, $\phi_R$ is a projective bundle and the horizontal part of $\Delta$ is the sum of $s$ relative hyperplanes.
\item $b=0$. Then $\rho(X)=1$, $N$ is ample, and $-(K_X+\Delta)\equiv mN$; in particular $-(K_X+\Delta)$ is ample.
\end{enumerate}
\end{theorem}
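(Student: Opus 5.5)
We start from Lemma~\ref{lem:delta1-covering}(ii). It gives $n=2m$, $\dim W=3m$, and $\dim\locus_U(W)=2m$, so $W$ is covering. Since $W$ is covering, the curves of class in $R$ pass through a general point, and $\phi_R$ is of fiber type. Let $F$ be a general fiber and set $b:=\dim Y=2m-\dim F$. For general $x\in U$ we have $\dim\locus_U(W,x)=m$. Since $W$ is unsplit, rational $W$-chains through $x$ stay in the fiber, which suggests $F$ is the rationally chain connected class of $x$ with respect to the family $W$. I will first establish the splitting type along a general member $g$, namely $E_g\simeq\OO(1)^{\oplus m}\oplus\OO^{\oplus m}$.

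This follows from Lemma~\ref{lem:conformal-degree}: $b_i+b_{n+1-i}=1$ and $\rk_+=m$. In addition, $\deg E_g=\ell(R)=m$. The $m$ positive degrees therefore sum to $m$ plus the absolute values of the negative ones. Pairing gives $b_i=1-b_{n+1-i}$, so a positive entry larger than $1$ would force a negative partner. To exclude this, I would use generality of $g$ in the covering family: the positive part of $E_g$ is the image of the tangent directions of deformations fixing $x$, while free curves in $U$ have $g^*T_X$ generated away from $p$. Comparing these with Lemma~\ref{lem:pullback-log-seq} should yield $E_g\simeq\OO(1)^m\oplus\OO^m$.

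Next I apply Lemma~\ref{lem:pullback-log-seq}: $0\to E_g\to g^*T_X\to\bigoplus_{i=1}^s\OO_{a_ip}\to0$. The aim is to show that every $a_i=1$ and that $T_X|_C\simeq\OO(2)\oplus\OO(1)^{m+s-2}\oplus\OO^{m-s+1}$. For generic $g$ the curve is immersed and the $\Delta_i$ meet it transversally at $p$. Each transverse boundary component then raises exactly one trivial summand to $\OO(1)$, as in the proof of Corollary~\ref{cor:delta1-TX-split}, and the tangent direction contributes the $\OO(2)$. This forces $s\le m$ and $\Delta\cdot C=s$.

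With $T_X|_C$ in hand, the positive part has rank $m+s-1$. This gives $\dim\locus(V,x)=m+s-1$ for the unrestricted family $V$ of these curves through $x$. Moreover, all tangent directions at $x$ of curves in the fiber are realised, with $T_F|_C\simeq\OO(2)\oplus\OO(1)^{m+s-2}$ because the trivial part is the normal bundle of the fiber. So $\dim F=m+s-1$ and $b=m-s+1$. Now the characterisation of projective space used in Theorem~\ref{thm:Pn} (\cite{Keb02,CMSB02}) applies to a general fiber $F$, since it carries an unsplit covering family with splitting $\OO(2)\oplus\OO(1)^{\dim F-1}$. Hence $F\simeq\PP^{m+s-1}$ with $C$ a line, and $N|_F\simeq\OO(1)$ because $N\cdot C=1$. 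Adjunction gives $-(K_F+\Delta|_F)\cdot C=m=(m+s)-\Delta|_F\cdot C$, so $\Delta|_F$ has degree $s$.

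The main obstacle is identifying $\Delta|_F$. It consists of $s$ hyperplanes, one from each $\Delta_i$, since $\Delta_i\cdot C=1$; they are in general position because $\Delta$ is snc and $F$ is general. The degree count alone does not exclude $\Delta_i|_F$ being, say, reducible or of other shape, but each $\Delta_i\cdot C=a_i=1$ with $C$ a line settles it. Over a dense open subset of $Y$, $\phi_R$ is smooth with fibers $\PP^{m+s-1}$ and the relative $\OO(1)$ is $N$, so $\phi_R$ is a projective bundle $\PP(\phi_*N)$. The horizontal components of $\Delta$ restrict to relative hyperplanes. When $s=1$, the log tangent sequence yields the stated extension.

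For $b=0$, $\phi_R$ contracts everything, so $\rho(X)=1$. Lemma~\ref{lem:det} gives $-(K_X+\Delta)\equiv\frac n2N=mN$, and $-(K_X+\Delta)$ is ample since it is negative on $R=\NE(X)$. Hence $N$ is ample.
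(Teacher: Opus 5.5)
Your proposal has a genuine gap at the step where you pass from the curve family to the fiber. You claim that all tangent directions of $F$ at $x$ are realised ``because the trivial part is the normal bundle of the fiber''. The surjection $T_X|_C\to N_{F/X}|_C\simeq\OO^{\oplus b}$ kills the summands $\OO(2)$ and $\OO(1)$, so it only gives $b\le m-s+1$, that is, $\dim F\ge\dim\locus(V,x)=m+s-1$. Nothing you wrote excludes $T_F|_C\simeq\OO(2)\oplus\OO(1)^{\oplus(m+s-2)}\oplus\OO^{\oplus e}$ with $e>0$, i.e.\ a fiber strictly larger than the one-step locus of curves through $x$. The linewise conformal data allow every $b\le m-s+1$: along $C$ you just get $T_F(-\log D_F)|_C\simeq\OO(1)^{\oplus m}\oplus\OO^{\oplus(m-b)}$. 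Without $\dim F=m+s-1$ you cannot apply the Kebekus / Cho--Miyaoka--Shepherd-Barron characterisation to $F$, and you do not obtain $b=m-s+1$.

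The inputs to that step are also unproved:
\begin{itemize}
\item The contact orders $a_i$ are constant on $W$. Every member satisfies $g^*\Delta_i\ge a_ip$ and has the same degree, so $g^*\Delta_i=a_ip$. Hence transversality cannot come from taking $g$ generic in $W$.
\item Even with all $a_i=1$, the colength-$s$ modification $E_g\subset g^*T_X$ at $p$ is $\OO(2)^{\oplus k}\oplus\OO(1)^{\oplus(m+s-2k)}\oplus\OO^{\oplus(m-s+k)}$. Here $k\ge1$ depends on the position of an $s$-dimensional subspace of the fiber at $p$, so $k=1$ needs a separate standardness argument.
\item The splitting pattern does not force $s\le m$. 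Lines through $H_1\cap\cdots\cap H_{m+1}$ in $(\PP^{2m},H_1+\cdots+H_{m+1})$ have exactly that pattern with $s=m+1$. The bound comes only from $b=m-s+1>0$.
\end{itemize}
Your derivation of $E_g\simeq\OO(1)^{\oplus m}\oplus\OO^{\oplus m}$ is salvageable. By generic smoothness of the evaluation $W\times\PP^1\to X$, $E_g$ is generated at a general point, hence nef, and then $b_i+b_{n+1-i}=1$ forces $b_i\in\{0,1\}$.

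The missing idea is to use the conformal structure on the whole fiber rather than along one curve:
\begin{itemize}
\item Dualise $E|_F\twoheadrightarrow\OO_F^{\oplus b}$ and apply $E\simeq E^\vee\otimes N$ to get $N_F^{\oplus b}\hookrightarrow E|_F$.
\item Since $N_F$ is ample, $H^0(F,N_F^{-1})=0$, so this injection lands in $T_F(-\log D_F)$.
\item As $b>0$ and $D_F\neq0$, Langer's logarithmic Wahl theorem gives $(F,D_F,N_F)\simeq(\PP^d,H_1+\cdots+H_s,\OO(1))$ directly.
\item Restricting $-(K_X+\Delta)\equiv mN$ to $F$ gives $d+1-s=m$, which fixes $\dim F$ and $b$.
\end{itemize}
The facts you tried to establish first then become consequences rather than inputs: $C$ is a line, each $a_i=1$, $\Delta\cdot C=s$, and both splittings follow from $T_F(-\log D_F)\simeq\OO^{\oplus(s-1)}\oplus\OO(1)^{\oplus m}$ and the relative tangent sequences. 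Your treatment of the case $b=0$ is fine.
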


\begin{proof}
Lemma~\ref{lem:conformal-degree} gives $n=2m$, and Lemma~\ref{lem:delta1-covering}\textup{(ii)} shows that $W$ covers $X$. Since every $W$-curve is contracted by $\phi_R$, the contraction is of fiber type. If $b=0$, then $\rho(X)=1$ and Lemma~\ref{lem:det} gives $-(K_X+\Delta)\equiv mN$; since $N$ is positive on the unique extremal ray, it is ample.

Assume $b>0$ and let $F$ be a general fiber. After shrinking the base, $F$ is smooth and $D_F:=\Delta|_F$ is simple normal crossing; vertical components of $\Delta$ do not meet $F$. Since $\rho(X/Y)=1$ and $N\cdot R=1$, the line bundle $N$ is $\phi_R$-ample; hence $N_F:=N|_F$ is ample. With $E:=T_X(-\log\Delta)$, the relative logarithmic tangent sequence is $0\to T_F(-\log D_F)\to E|_F\to\OO_F^{\oplus b}\to0.$
Dualising the quotient $E|_F\twoheadrightarrow\OO_F^{\oplus b}$ gives an injection $\OO_F^{\oplus b}\hookrightarrow E^\vee|_F$. Tensoring by $N_F$ and using $E|_F\simeq E^\vee|_F\otimes N_F$ yields $N_F^{\oplus b}\hookrightarrow E|_F.$
The composition with $E|_F\twoheadrightarrow\OO_F^{\oplus b}$ is a morphism $N_F^{\oplus b}\to\OO_F^{\oplus b}$. Since $N_F$ is ample, $H^0(F,N_F^{-1})=0$, so this morphism is zero. Hence the injection factors through the relative logarithmic tangent bundle: $N_F^{\oplus b}\hookrightarrow T_F(-\log D_F).$
A general point of $F$ lies on a $W$-curve, whose unique boundary point lies on the horizontal boundary; thus $D_F\ne0$. Langer's logarithmic Wahl theorem \cite[Theorem~1.1]{Langer24} yields $(F,D_F,N_F)\simeq(\PP^d,H_1+\cdots+H_s,\OO_{\PP^d}(1)),\qquad s\ge1.$
Restricting Lemma~\ref{lem:det} to $F$ gives $-(K_F+D_F)\equiv mN_F$. On $\PP^d$ with $s$ hyperplanes this reads $(d+1-s)H=mH$, hence $d=m+s-1$. Since $\dim X=2m$ and $\dim F=d$, the base has dimension $b=2m-d=m-s+1.$
The assumption $b>0$ is therefore equivalent to $1\le s\le m$.

A $W$-curve in $F$ has $N_F$-degree one, hence is a line. Its reduced inverse image of $D_F$ is one point, so it meets all $H_i$ there and $D_F\cdot C=s$. By \cite[Remark~1.2]{Langer24},
$T_F(-\log D_F)\simeq\OO_F^{\oplus(s-1)}\oplus\OO_F(1)^{\oplus m}$; restricting the relative logarithmic tangent sequence to $C$ and using $H^1(C,T_F(-\log D_F)|_C)=0$ gives
$E|_C\simeq\OO(1)^{\oplus m}\oplus\OO^{\oplus m}$. The ordinary tangent sequence gives the second splitting.

Finally, shrink $Y$ so that the fibers have the form just obtained. Since $H^i(\PP^d,\OO(1))=0$ for $i>0$ and $h^0(\PP^d,\OO(1))=d+1$ is constant, cohomology and base change make $\phi_{R*}N$ locally free and the evaluation $\phi_R^*\phi_{R*}N\to N$ surjective. It defines a morphism from $X$ to the associated projective bundle over $Y$; on every fiber this is the standard isomorphism $\PP^d\to\PP(H^0(\PP^d,\OO(1))^\vee)$. After a further shrinking it is an isomorphism. Each horizontal component of $\Delta$ restricts to a hyperplane, hence is a relative hyperplane.
\end{proof}

\subsection*{The case $\delta=0$}

\begin{lemma}\label{lem:exclude-delta0-d1}
If $\delta=0$, then $\deg(f^*N)=2$.
\end{lemma}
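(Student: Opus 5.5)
The plan is to argue by contradiction. Assume $\delta=0$ and $d:=\deg(f^*N)=1$. Then Lemma~\ref{lem:conformal-degree} gives $n=2m$ and $\ell(R)=m$. Since $f(\PP^1)\cap\Supp\Delta=\varnothing$, we have $\Delta\cdot f_*[\PP^1]=0$ and $f^*T_X(-\log\Delta)=f^*T_X$. Write $f^*T_X\simeq\bigoplus\OO_{\PP^1}(b_i)$ with $b_1\ge\cdots\ge b_{2m}$. By Lemma~\ref{lem:conformal-degree} these satisfy $b_i+b_{2m+1-i}=1$, and $\rk_+(f^*T_X)=m$ for every member of the component $V\subset\Hom(\PP^1,X)$ containing $[f]$.

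The first key input is the differential $T_{\PP^1}\simeq\OO(2)\hookrightarrow f^*T_X$. It forces $b_1\ge2$, and therefore, by the self-duality pairing, $b_{2m}=1-b_1\le-1$. The same holds for every member of $V$. In particular no member of $V$ is free, since $f^*T_X$ always has a negative summand.

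The proof then splits according to whether $V$ is covering.

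\emph{Covering case.} Suppose $\locus(V)=X$. By generic smoothness of the evaluation map, a general member of $V$ through a general point of $X$ is free, so $f^*T_X$ is nef. This contradicts $b_{2m}\le-1$.

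\emph{Non-covering case.} Suppose $\locus(V)\subsetneq X$. By Lemma~\ref{lem:unsplit-minimal} the family is unsplit, so Proposition~\ref{prop:Langer-estimates}\textup{(ii)} applies:
$$
3m\le\dim V\le\dim\locus(V)+\dim\locus(V,x)+1\le(2m-1)+m+1=3m .
$$
Equality therefore holds throughout. Hence $\locus(V)$ is a divisor $D$, and $\dim\locus(V,x)=m$ for general $x\in D$.

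The plan for this case is to push the count one step further, replacing the crude bound $\dim\locus(V,x)\le\rk_+$ by the sharper Kollár-type estimate
$$
\dim\locus(V,x)\le h^0\bigl(\PP^1,g^*T_X(-1)\bigr)-1 .
$$
For a general $[g]\in V_x$ this should be computed via $T_{\PP^1}\subset g^*T_X$, which already occupies the $\OO(b_1)$ direction.

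Two further facts will be combined with this. First, $\ell_K(R)=-K_X\cdot C=m$, because $\Delta\cdot C=0$. Second, the Ionescu--Wiśniewski inequality
$$
\dim\mathrm{Exc}(R)+\dim(\text{fiber})\ge n+\ell_K(R)-1=3m-1
$$
applies, with $\mathrm{Exc}(R)\supseteq D$. Since $\locus(V,x)$ sits inside a fiber of $\phi_R$, these give that the fibers through points of $D$ have dimension at least $m$.

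Finally, $D\cdot C<0$ for the $V$-curves, since a divisor swept out by a non-covering unsplit family is negative on it. Then $N_{C/X}$ has a quotient $\OO_C(D)|_C$ of negative degree. I expect this to be compatible only with $b_{2m}\le-1$ and all other $b_i\ge0$, i.e.\ $b_1=2$ and $b_{2m}=-1$. The resulting splitting should then be
$$
f^*T_X\simeq\OO(2)\oplus\OO(1)^{\oplus(m-1)}\oplus\OO^{\oplus(m-1)}\oplus\OO(-1),
$$
which has degree $m=\ell(R)$.

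The main obstacle is to extract a genuine contradiction from this rigid configuration, because the naive dimension count is exactly tight. What I would try first is to restrict the nondegenerate form $g$ to $T_D|_C$, which contains the $\OO(2)\oplus\OO(1)^{m-1}\oplus\OO^{m-1}$ part. The isotropy forced by $b_i+b_{2m+1-i}=1$ should make $\OO(2)$ pair nontrivially only with $\OO(-1)$, the normal direction to $D$. From there the aim is to show that $\OO_D(D)$ restricted to $C$ would have to equal $N_F^{-1}$ restricted to $C$ up to a trivial factor, in a way that is inconsistent with the unsplitness of $V$ and with $N\cdot C=1$. If that fails, the fallback is to run the Ionescu--Wiśniewski bound on $D\to\phi_R(D)$ directly and show that the fiber dimension is forced above $m$, contradicting $\dim\locus(V,x)=m$.
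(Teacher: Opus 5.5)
\textbf{Assessment.} Your covering case and the dimension count are correct and agree with the paper. In the covering case, a general member through a general point would be free, while self-duality forces $b_{2m}=1-b_1\le -1$. In the non-covering case, the unsplit estimate forces $\dim\locus(V)=2m-1$ and $\dim\locus(V,x)=m$.

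The gap is that you never close the non-covering case, and none of the tools you list can close it.
\begin{itemize}
\item With an exceptional locus of dimension $2m-1$, Ionescu--Wi\'sniewski gives only $\dim F\ge m$. That is exactly what $\dim\locus(V,x)=m$ already says, and if $\phi_R$ were of fiber type it gives only $\dim F\ge m-1$.
\item The Koll\'ar bound gives $m+1=\dim V_x\le h^0(g^*T_X(-1))=a+m-1$, that is, only $a\ge 2$.
\item The claim $D\cdot C<0$ is not a general principle. Lines in the singular fibers of a standard conic bundle over a surface form a non-covering unsplit family of minimal curves. Their locus is the preimage of the discriminant curve, a pullback from the base, so it has degree $0$ on each line. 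You would first need $\phi_R$ birational with exceptional divisor $D$, and even then nothing forces $b_1=2$.
\item Above all, no argument that uses only the splitting type and $g|_C$ on a single curve can give a contradiction. The bundle $\OO(a)\oplus\OO(1)^{\oplus(m-1)}\oplus\OO^{\oplus(m-1)}\oplus\OO(1-a)$ carries a nondegenerate $\OO(1)$-valued hyperbolic form: pair $\OO(a)$ with $\OO(1-a)$ and each $\OO(1)$ with an $\OO$.
\end{itemize}
So restricting $g$ to $T_D|_C$ cannot succeed. You need either global information on how the curves move, or differential information on $g$ along $C$. For the global route, one idea: the positive part $\OO(a)\oplus\OO(1)^{\oplus(m-1)}$ is $g$-isotropic of rank $m$, so the loci $\locus(V,x)$ are generically maximal isotropic, and one may try to exploit their rigidity.

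\textbf{The paper's route.} The paper takes the differential route.
\begin{itemize}
\item Generic smoothness of $V\times\PP^1\to\locus(V)$ gives $\#\{i:a_i\ge0\}\ge 2m-1$. This yields the splitting above with some $a\ge2$, and the ramification divisor $B$ has degree $a-2$.
\item Since $v^*g\in H^0(\OO(-3))=0$, the curve $v$ is null. Its tangent lift $\widetilde v$ therefore lies in the null quadric bundle $S\subset Y_U=\PP_{\mathrm{sub}}(T_U)$.
\item Every line subbundle of $\widetilde v^{\,*}T_{Y_U}$ has degree at most $a$. Ye's formula is then quoted to give the characteristic line $N_S^0$ degree $2a-1>a$.
\end{itemize}

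If you adopt this route, check that last step. The map $d\pi$ sends $N_S^0$ isomorphically onto $\mathcal T|_S$; equivalently $N_S^0\simeq\OO_{Y_U}(-S)|_S\otimes\mathcal L|_S\simeq\mathcal T|_S$. Hence $\widetilde v^{\,*}N_S^0\simeq\OO(a)$. On the other hand, $T_{\PP^1}^{\otimes2}(2B)\otimes v^*N^{-1}\simeq\widetilde v^{\,*}\mathcal T\otimes\widetilde v^{\,*}\mathcal L^{-1}$. The two agree only when $\widetilde v^{\,*}\mathcal L$ is trivial, which is the null-geodesic situation.

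So the ingredient actually needed, in either write-up, is that the minimal curve is a null geodesic. Once that is known, $\mathcal L$ is trivial along the compact characteristic leaf, since it carries a connection along that leaf. Hence $v^*N\simeq T_{\PP^1}(B)$, and $\deg v^*N\ge 2$ follows at once.
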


\begin{proof}
Assume $d:=\deg(f^*N)=1$. By Lemma~\ref{lem:conformal-degree}, $n=2m$ and $\ell(R)=m$. Since $\Delta\cdot R=0$, this is also the ordinary Mori length of $R$. Let $V\subset\Hom(\PP^1,X)$ be the component containing $[f]$, set $Z(f):=\locus(V)$ and $F_x(f):=\locus(V,x)$ for a smooth point $x\in f(\PP^1)\cap U$, where $U:=X\setminus\Supp\Delta$. Every member of $V$ meeting $U$ is contained in $U$.

The family $V$ is unsplit by Lemma~\ref{lem:unsplit-minimal}, and Proposition~\ref{prop:Langer-estimates}\textup{(ii)} gives $3m\le\dim V\le\dim Z(f)+\dim F_x(f)+1.$
Set $r:=\dim F_x(f)$. Self-duality on $U$ pairs the splitting degrees of $u^*T_X$ to sum to one, so Proposition~\ref{prop:Langer-estimates}\textup{(ii)} gives $r\le m$, while the preceding inequality gives $r\ge m-1$.

The required information on the splitting follows directly from generic smoothness. Let $v$ be general and write $v^*T_X\simeq\bigoplus_{i=1}^{2m}\OO(a_i)$ with $a_1\ge\cdots\ge a_{2m}$. At a general point $(v,t)$, the differential of the universal evaluation $V\times\PP^1\to Z(f)$ has rank $\dim Z(f)$. Its image is contained in the image of the evaluation map
$$
H^0(\PP^1,v^*T_X)\longrightarrow v^*T_X|_t.
$$
For the displayed splitting, this evaluation map has rank exactly $\#\{i:a_i\ge0\}$. Hence $\#\{i:a_i\ge0\}\ge\dim Z(f).$
If $r=m-1$, then $\dim Z(f)=2m$, so all $a_i$ are nonnegative; but $a_1\ge2$ and self-duality gives $a_{2m}=1-a_1<0$. Hence $r=m$. The same argument excludes $\dim Z(f)=2m$, so $\dim Z(f)=2m-1$ and therefore $a_{2m-1}\ge0$ for a general $v$. The relations $a_i+a_{2m+1-i}=1$ then force
$$
v^*T_X\simeq\OO(a)\oplus\OO(1)^{\oplus(m-1)}\oplus\OO^{\oplus(m-1)}\oplus\OO(1-a),\qquad a\ge2.
$$
Minimality makes $v$ birational onto its image. If $B$ is its ramification divisor, the differential $\OO(2)\to v^*T_X$ can map only to $\OO(a)$, hence $\deg B=a-2$.

Ye's formulas~\textup{(1.5a)} and~\textup{(1.9a)} are used with the following conventions. Let $Y_U:=\PP_{\mathrm{sub}}(T_U)$ be the projective bundle of tangent lines, let $\pi:Y_U\to U$, and let $\mathcal T:=\OO_{Y_U}(-1)\subset\pi^*T_U$ be the tautological subbundle. Set $\mathcal L:=\mathcal T^\vee\otimes\pi^*N$. The conformal isomorphism $T_U\otimes N^{-1}\simeq\Omega_U^1$ identifies this with the weighted tautological line bundle used in \cite{Ye94}. The first formula gives, for the tangent lift $\widetilde v:\PP^1\to Y_U$, $\widetilde v^{\,*}\mathcal L\simeq\Omega_{\PP^1}^1(-B)\otimes v^*N\simeq\OO(1-a).$
Since $v^*N\simeq\OO(1)$, it follows that $\widetilde v^{\,*}\OO_{Y_U}(1)\simeq\OO(-a)$. Pull back the relative Euler sequence $0\to\OO_{Y_U}\to\pi^*T_U\otimes\OO_{Y_U}(1)\to T_{Y_U/U}\to0.$
The first map lands in the trivial summand obtained from $\OO(a)\otimes\OO(-a)$, since the saturated tangent direction of $v$ lies in the summand $\OO(a)$. Consequently
$$
\widetilde v^{\,*}T_{Y_U/U}\simeq\OO(1-2a)\oplus\OO(-a)^{\oplus(m-1)}\oplus\OO(1-a)^{\oplus(m-1)}.
$$
The tangent sequence is $0\to\widetilde v^{\,*}T_{Y_U/U}\to\widetilde v^{\,*}T_{Y_U}\to v^*T_U\to0.$
A line subbundle of $\widetilde v^{\,*}T_{Y_U}$ either maps nontrivially to $v^*T_U$, whose largest summand has degree $a$, or is contained in the relative term, whose largest summand has degree $1-a$. Thus every line subbundle of $\widetilde v^{\,*}T_{Y_U}$ has degree at most $a$.

On the other hand, $v^*g\in H^0\bigl(\PP^1,\Sym^2\Omega_{\PP^1}^1\otimes v^*N\bigr)=H^0(\PP^1,\OO(-3))=0.$
Hence $v$ is null and $\widetilde v(\PP^1)$ lies in the null quadric bundle $S\subset Y_U$. For a ramified null curve, the second formula gives $\widetilde v^{\,*}N_S^0\simeq T_{\PP^1}^{\otimes2}(2B)\otimes v^*N^{-1}\simeq\OO(2a-1),$
where $N_S^0\subset T_S\subset T_{Y_U}|_S$ is the characteristic line. This is a line subbundle of $\widetilde v^{\,*}T_{Y_U}$ of degree $2a-1>a$, a contradiction. For comparison, the formula has the expected normalization on the quadric: for a line $v$ in $Q^n$, $B=0$ and $v^*N\simeq\OO(2)$, so the same formula gives $\widetilde v^{\,*}N_S^0\simeq\OO(2)=T_{\PP^1}$. Hence $d\ne1$, and Lemma~\ref{lem:conformal-degree} gives $d=2$.
\end{proof}

\begin{proposition}\label{prop:quadric-branch}
If $\delta=0$, then $\Delta=\varnothing$ and $X\simeq Q^n$.
\end{proposition}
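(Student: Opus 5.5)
By Lemma~\ref{lem:exclude-delta0-d1}, the case $\delta=0$ forces $d=\deg(f^*N)=2$. Lemma~\ref{lem:conformal-degree} then gives $\ell(R)=n$. The plan is in two stages. First we show that the family $V$ of minimal curves is a dominating unsplit family with the splitting type of lines on a quadric, and deduce $X\simeq Q^n$ from a characterisation of the quadric by such a family. Second we show that $\Delta$ must be empty.

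\emph{Step 1 (dimension count).} Let $V\subset\Hom(\PP^1,X)$ be the component containing $[f]$; it is unsplit by Lemma~\ref{lem:unsplit-minimal}. Proposition~\ref{prop:Langer-estimates}\textup{(ii)} gives
$$2n\le\dim V\le\dim\locus(V)+\dim\locus(V,x)+1.$$
The pairing $b_i+b_{n+1-i}=2$ from Lemma~\ref{lem:conformal-degree}, together with $b_1\ge2$ coming from $df$, gives $b_n\le0$, hence $\rk_+\le n-1$. The inequality then forces $\dim\locus(V)=n$ and $\dim\locus(V,x)=n-1$, so $V$ covers $X$. For a general member $v$, generic smoothness shows that the evaluation map has full rank, so all $b_i\ge0$. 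Self-duality then leaves two possibilities: $v^*T_X\simeq\OO(2)\oplus\OO(1)^{\oplus(n-2)}\oplus\OO$, or a type with $b_1\ge3$. The latter is excluded by $\rk_+=n-1$ and $\sum b_i=n$, which force $b_1=2$. This is exactly the splitting along lines in Remark~\ref{rem:quadric-lines}.

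\emph{Step 2 (no boundary).} Since $\Delta\cdot R=0$ and the curves of $V$ cover $X$, every component $\Delta_i$ is disjoint from a general $V$-curve. We expect to show that a nonempty $\Delta$ contradicts the structure of $X$, as follows. Once $X\simeq Q^n$ is known in Step~3, we have $\rho(X)=1$, so any nonzero effective divisor has positive degree on lines. This contradicts $\Delta\cdot R=0$, and hence $\Delta=\varnothing$. We therefore postpone this step until after Step~3. Alternatively, we may argue directly that $\rho(X)=1$ via the rationally chain connected fibration of the unsplit covering family (the fibers have dimension $\ge n-1+1$).

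\emph{Step 3 (identification of $Q^n$).} We have an unsplit covering family whose general member has anticanonical degree $n$ (because $\Delta\cdot C=0$) and standard splitting, so the VMRT at a general point has dimension $n-2$. Moreover, by the conformal structure every $V$-curve is null: $f^*g$ is a section of $\OO(-4)\otimes\OO(2)=\OO(-2)$, which vanishes. Thus the VMRT is contained in the null quadric of $\PP(T_xX)$, which has dimension $n-2$, and so equals this smooth quadric. The characterisation of the quadric by the VMRT (Mok, Hwang, or Miyaoka's theorem for $-K_X\cdot C=n$ with a covering minimal family, e.g.\ Dedieu--Höring) then gives $X\simeq Q^n$.

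The main obstacle is Step~3. The length-$n$ characterisation of the quadric needs the minimal length $n$ to hold for all rational curves through a general point, not only within $R$. We would handle this by first proving $\rho(X)=1$: the $V$-rationally connected quotient has fibers of dimension $\ge\dim\locus(V,x)+1=n$, so it is trivial, and unsplitness then gives $\rho=1$. Completing Step~2 is then immediate.
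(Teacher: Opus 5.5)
\textbf{Gap: the proof of $\rho(X)=1$.} The weak point is where you yourself locate the obstacle, and the remedy you propose does not work. It is false that, for an unsplit covering family, the $V$-rationally connected quotient has fibres of dimension at least $\dim\locus(V,x)+1$. Take $\PP^{n-1}\times C$ with $C$ a curve, and let $V$ be the family of lines in the fibres $\PP^{n-1}\times\{c\}$. Then $V$ is unsplit and covering, $\dim V=2n$, $-K\cdot\ell=n$, the splitting is $\OO(2)\oplus\OO(1)^{\oplus(n-2)}\oplus\OO$, and $\dim\locus(V,x)=n-1$. So every conclusion of your Step~1 holds, yet $\locus(V,x)$ is already a whole equivalence class and $\rho=2$.

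This is precisely the case $\dim Y=1$ of the contraction $\phi_R$. Since $\phi_R$ contracts every $V$-curve, its fibre through a general point contains $\locus(V,x)$, which gives $\dim Y\le 1$ directly. The case $\dim Y=1$ cannot be ruled out by the numerics of $V$; it needs the conformal structure. Without $\rho(X)=1$, neither your Step~2 nor the recognition theorems in Step~3 are available: they require global input, namely $\rho(X)=1$, or $X$ Fano with $-K_X\cdot C\ge n$ for every rational curve, and Step~1 provides neither.

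\textbf{A smaller point in Step~1.} Once all $b_i\ge0$, the competing splittings are $\OO(2)^{\oplus k}\oplus\OO(1)^{\oplus(n-2k)}\oplus\OO^{\oplus k}$, not ones with $b_1\ge3$. Excluding $k\ge2$ needs $\rk_+(v^*T_X)=n-1$ for a general $v$. This follows from $\dim\locus(V,x)=n-1$ by generic smoothness of the pointed evaluation.

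\textbf{How the paper closes the gap.} Ionescu--Wi\'sniewski with $\ell(R)=n$ shows that $\phi_R$ is of fibre type with $\dim Y\le1$. If $\dim Y=1$, self-duality gives $N_F\hookrightarrow T_F(-\log D_F)$ on a general fibre $F$. Langer's logarithmic Wahl theorem then yields $(F,N_F)\simeq(\PP^{n-1},\OO(1))$. A line in $F$ is a curve in $R$ of $N$-degree $1$, hence of $-(K_X+\Delta)$-degree $n/2<\ell(R)$, a contradiction. Therefore $\rho(X)=1$, $\Delta\equiv0$ forces $\Delta=\varnothing$, and Ye's theorem gives $Q^n$.

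\textbf{A repair using your own tools.} Suppose $\dim Y=1$. The general fibre $F$ is smooth at a general point $x$ and contains every $V$-curve through $x$. So the variety of minimal rational tangents at $x$, which has dimension $n-2$, fills the hyperplane $\PP(T_xF)\subset\PP(T_xX)$. By your nullity observation it lies in the null quadric, which is smooth. A smooth quadric hypersurface in $\PP^{n-1}$ contains no hyperplane, so this is impossible. After this, $\Delta=\varnothing$ is immediate, and you can simply invoke Ye's theorem.
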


\begin{proof}
By Lemma~\ref{lem:exclude-delta0-d1}, $\deg(f^*N)=2$, hence $\ell(R)=n$ and $\Delta\cdot R=0$. The ray is $K_X$-negative. The Ionescu--Wi\'sniewski inequality excludes a birational contraction: for a nontrivial fiber component $F$ and an exceptional component $E$ it would give
$\dim E+\dim F\ge2n-1$, whereas both dimensions are at most $n-1$. Thus $\phi_R$ is of fiber type. Taking $E=X$ in the same inequality gives $\dim Y\le1$.

Suppose $\dim Y=1$ and let $F$ be a general fiber. Then $F$ is smooth of dimension $n-1$, $N_F:=N|_F$ is ample, and $0\to T_F(-\log D_F)\to T_X(-\log\Delta)|_F\to\OO_F\to0, \qquad D_F:=\Delta|_F.$
Dualising the quotient $T_X(-\log\Delta)|_F\twoheadrightarrow\OO_F$ and using the conformal self-duality gives an injection $N_F\hookrightarrow T_X(-\log\Delta)|_F$. Its composition with the quotient to $\OO_F$ is zero since $H^0(F,N_F^{-1})=0$, so it factors through $T_F(-\log D_F)$. Langer's logarithmic Wahl theorem gives $F\simeq\PP^{n-1}$ and $N_F\simeq\OO(1)$. A line in $F$ is contracted by $\phi_R$ and has $N$-degree one, contradicting the minimality of $f$, whose $N$-degree is two. Hence $Y$ is a point.

Thus $\rho(X)=1$. Since $\Delta\cdot R=0$, $\Delta\equiv0$, and an effective numerically trivial divisor on a projective variety is zero. Hence $\Delta=\varnothing$. The logarithmic conformal structure is therefore an ordinary holomorphic conformal structure and $K_X$ is not nef; Ye's theorem \cite[Theorem~2.1]{Ye94} gives $X\simeq Q^n$.
\end{proof}

\begin{definition}\label{def:rational-maximal-isotropic-fibration-short}
A \emph{rational maximal isotropic fibration} of a log-conformal pair $(X,\Delta,g)$ of dimension $2m$ is a dominant rational map
$
\pi:X\dashrightarrow Y
$
for which there exist dense open subsets $X^\circ\subset X$ and $Y^\circ\subset Y$ such that $\pi^\circ:X^\circ\to Y^\circ$ is smooth and
$
T_{X^\circ/Y^\circ}(-\log\Delta^\circ)\subset T_{X^\circ}(-\log\Delta^\circ)
$
is a maximally isotropic subbundle with respect to $g|_{X^\circ}$.
\end{definition}

The preceding results give the classification.

\begin{theorem}\label{thm:main}
Let $(X,\Delta)$ be a smooth complex projective simple normal crossing pair of dimension $n\ge 3$ carrying an everywhere nondegenerate log-conformal tensor $(N,g)$. Assume that $K_X+\Delta$ is not nef. Then one of the following mutually exclusive alternatives occurs.
\begin{enumerate}
\item $\Delta=\varnothing$ and $X\simeq Q^n$.
\item $X\simeq\PP^n$ and $\Delta$ is a hyperplane.
\item $n=2m$ is even and there is a $(K_X+\Delta)$-negative elementary contraction $\phi_R:X\to Y$, generated by a minimal rational curve $C$ with $N\cdot C=1$, for which one of the following holds.
\begin{enumerate}
\item $\dim Y>0$. Then, for some $1\le s\le m$,
$
\dim Y=m-s+1,
$
and the geometric generic fiber is
$
(\PP^{m+s-1},H_1+\cdots+H_s),
$
with $H_1,\dots,H_s$ hyperplanes in general position; for a general minimal curve $C$ in this fiber, $s=\Delta\cdot C$. On dense open subsets $X^\circ\subset X$ and $Y^\circ\subset Y$, the restriction $\phi_R^\circ:X^\circ\to Y^\circ$ is a projective bundle, the horizontal part of $\Delta^\circ$ is the sum of $s$ relative hyperplanes, and $N|_{X^\circ}\simeq\OO_{\phi_R^\circ}(1)\otimes(\phi_R^\circ)^*L$
for a line bundle $L$ on $Y^\circ$.
If $s=1$, then $\phi_R^\circ$ is a rational maximal isotropic fibration and there is an exact sequence $0\to\OO_{Y^\circ}\to V\to T_{Y^\circ}\otimes L^{-1}\to0$
such that
$
(X^\circ,\Delta^\circ)\simeq
\bigl(\PP(V),\PP(T_{Y^\circ}\otimes L^{-1})\bigr).
$
\item $Y$ is a point. Then $\rho(X)=1$, $-(K_X+\Delta)\equiv mN$, and $-(K_X+\Delta)$ is ample.
\end{enumerate}
\end{enumerate}
\end{theorem}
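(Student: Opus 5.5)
The plan is to assemble the theorem from the case analysis already carried out in Section~\ref{sec:nonnef}. First fix a $(K_X+\Delta)$-negative extremal ray $R$, which exists because $K_X+\Delta$ is not nef. Let $f:\PP^1\to X$ be the minimal morphism of Proposition~\ref{prop:A1-normal}, so that $\delta:=\deg((f^{-1}\Delta)_{\rm red})\le1$. By Lemma~\ref{lem:conformal-degree}, $d=\deg f^*N\in\{1,2\}$, and $nd=2\ell(R)$. This leaves four cases indexed by $(\delta,d)$.

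If $\delta=0$, Lemma~\ref{lem:exclude-delta0-d1} forces $d=2$, and Proposition~\ref{prop:quadric-branch} gives $\Delta=\varnothing$ and $X\simeq Q^n$; this is alternative (1). If $\delta=1$ and $d=2$, Theorem~\ref{thm:Pn} gives $X\simeq\PP^n$ with $\Delta$ a hyperplane; this is alternative (2). If $\delta=1$ and $d=1$, Theorem~\ref{thm:third-branch} gives $n=2m$ and a fiber-type contraction $\phi_R$ generated by a curve of $N$-degree one. Its two subcases are exactly (3a) and (3b), including the projective bundle structure over a dense open $Y^\circ$, the equality $s=\Delta\cdot C$, and, when $Y$ is a point, $\rho(X)=1$ with $-(K_X+\Delta)\equiv mN$ ample.

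Mutual exclusivity is elementary. In case (1) $\Delta=\varnothing$, whereas in (2) $\Delta\neq\varnothing$. In (3a) $\rho(X)\ge2$. In (3b) the minimal curve has $N\cdot C=1$, while on $Q^n$ and on $(\PP^n,H)$ we have $N=\OO(2)$ by Lemma~\ref{lem:det} and $\rho=1$, so every curve has even $N$-degree. The only caveat is that $N$ is determined only up to torsion, but its degree on curves is not affected.

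The remaining content is the refinement in (3a): the form of $N|_{X^\circ}$ and the $s=1$ description. Since $N|_F\simeq\OO(1)$ on every fiber, the line bundle $N\otimes\OO_{\phi_R^\circ}(-1)$ is trivial on fibers. After shrinking $Y^\circ$ it is therefore a pullback $(\phi_R^\circ)^*L$, by the see-saw principle together with cohomology and base change; this is essentially the construction at the end of the proof of Theorem~\ref{thm:third-branch}.

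For $s=1$, with $X^\circ=\PP(V')$ and the horizontal boundary a relative hyperplane $\PP(W)\subset\PP(V')$, I would argue as follows. The relative log tangent bundle has rank $m$ and sits inside $T_X(-\log\Delta)$ of rank $2m$. Its restriction to a fiber is $T_{\PP^m}(-\log H)\simeq\OO(1)^{\oplus m}$. The form $g$ restricted to this subbundle is a section of $\Sym^2$ of $\OO(-1)^{\oplus m}\otimes\OO(1)$ on each fiber, that is, of $\OO(-1)^{\oplus\cdot}$, which has no sections. Hence $g$ vanishes on it, and the relative log tangent bundle is maximal isotropic. This gives the rational maximal isotropic fibration.

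The conformal pairing then identifies the quotient $\OO^{\oplus m}\simeq(\phi_R^\circ)^*T_{Y^\circ}$ with the dual of the isotropic subbundle, twisted by $N$. Pushing forward, this identifies $W$ with $T_{Y^\circ}\otimes L^{-1}$, after normalizing $V'$ by a twist. The quotient $V/W$ is then a line bundle, which I expect to be trivial after the same normalization, giving the extension $0\to\OO\to V\to T_{Y^\circ}\otimes L^{-1}\to0$. Tracking these twists consistently is the step I expect to be the main obstacle. I would first test the normalization on the product model of Proposition~\ref{prop:abelian-projective-example}.
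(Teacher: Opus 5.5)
Your case assembly follows the paper's proof: the $(\delta,d)$ split feeds into Proposition~\ref{prop:quadric-branch}, Theorem~\ref{thm:Pn} and Theorem~\ref{thm:third-branch}. The same holds for your description of $N|_{X^\circ}$, and for the isotropy of $P:=T_{X^\circ/Y^\circ}(-\log\Delta^\circ)$ because $\Sym^2(\OO(-1)^{\oplus m})\otimes\OO(1)$ has no sections, giving $E/P\simeq P^\vee\otimes N\simeq(\pi^\circ)^*T_{Y^\circ}$. Your Picard-number and parity argument for mutual exclusivity is correct, and it supplies something the paper leaves implicit.

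\textbf{The gap} is the final $s=1$ identification, which you leave as an expectation. Knowing $P\simeq(\pi^\circ)^*\Omega^1_{Y^\circ}\otimes N$ says nothing by itself about the bundle cutting out $\Delta^\circ$. You need an independent description of $P$ in terms of that bundle, and this is the missing idea: the relative logarithmic Euler sequence. In the Grothendieck convention of the statement, write $X^\circ=\PP(V)$ and $\Delta^\circ=\PP(\cG)$ for a rank-$m$ quotient $V\twoheadrightarrow\cG$ with line-bundle kernel $K$. Then $\OO(\Delta^\circ)\simeq\OO_{\pi^\circ}(1)\otimes(\pi^\circ)^*K^{-1}$. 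Moreover, $P(-\Delta^\circ)$ is the pull-back of the bundle of translations of the affine bundle $X^\circ\setminus\Delta^\circ\to Y^\circ$, which is $\cG^\vee\otimes K$. Hence $P\simeq\OO_{\pi^\circ}(1)\otimes(\pi^\circ)^*\cG^\vee$, which is the identity \eqref{eq:P-from-G} the paper uses. Compare this with $P\simeq\OO_{\pi^\circ}(1)\otimes(\pi^\circ)^*(\Omega^1_{Y^\circ}\otimes L)$, cancel $\OO_{\pi^\circ}(1)$, and apply $\pi^\circ_*$. This gives $\cG\simeq T_{Y^\circ}\otimes L^{-1}$.

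Two further corrections. First, triviality of the kernel is not something to expect or prove; it is a normalization. Twist $V$ by $K^{-1}$ first, and only then define $L$ through $N\simeq\OO_{\pi^\circ}(1)\otimes(\pi^\circ)^*L$. Twisting $V$ by a line bundle $M$ replaces $\cG$ by $\cG\otimes M$ and $L$ by $L\otimes M^{-1}$, so the relation $\cG\simeq T_{Y^\circ}\otimes L^{-1}$ survives the normalization.

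Second, your sketch mixes conventions. Suppose $W\subset V'$ is a subbundle with $V'/W$ a line bundle, and $\PP(W)\subset\PP(V')$ parametrizes lines. Then the correct output is $W\simeq\Omega^1_{Y^\circ}\otimes L$ and $0\to W\to V'\to\OO\to0$, whose dual is the asserted sequence with $V=V'^{\vee}$. As written, ``$W\simeq T_{Y^\circ}\otimes L^{-1}$ with $V/W$ a line bundle'' is off by a dual. It does not yield $0\to\OO\to V\to T_{Y^\circ}\otimes L^{-1}\to0$.
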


\begin{proof}
Choose a $(K_X+\Delta)$-negative extremal ray $R$ and let $f:\PP^1\to X$ be the minimal morphism furnished by Proposition~\ref{prop:A1-normal}. Then $\delta\le1$.

If $\delta=0$, Proposition~\ref{prop:quadric-branch} gives $\Delta=\varnothing$ and $X\simeq Q^n$, which is \textup{(i)}.

Assume $\delta=1$. By Lemma~\ref{lem:conformal-degree}, $\deg(f^*N)\in\{1,2\}$. If $\deg(f^*N)=2$, Theorem~\ref{thm:Pn} gives $X\simeq\PP^n$ and $\Delta$ a hyperplane, which is \textup{(ii)}.

Assume $\deg(f^*N)=1$. Theorem~\ref{thm:third-branch} gives \textup{(iii)(i)} or \textup{(iii)(ii)}. The only additional assertion in Theorem~\ref{thm:main} concerns the case \textup{(iii)(i)} with $s=1$. In this case $\dim Y=m$ and the general fiber is $(\PP^m,H)$.

Choose a dense open subset $Y^\circ\subset Y$ over which the projective-bundle description holds, set $X^\circ:=\phi_R^{-1}(Y^\circ)$, and write $\pi^\circ:=\phi_R|_{X^\circ}:X^\circ\to Y^\circ$. The logarithmic relative tangent bundle
$
P^\circ:=T_{X^\circ/Y^\circ}(-\log\Delta^\circ)
$
has rank $m$. On a line $\ell$ in a fiber, $P^\circ|_\ell\simeq\OO_{\PP^1}(1)^{\oplus m}$ and $N|_\ell\simeq\OO_{\PP^1}(1)$. Hence $H^0\Bigl(\ell,\Sym^2((P^\circ|_\ell)^\vee)\otimes N|_\ell\Bigr)=0.$
Lines cover the fibers, so $g|_{P^\circ\otimes P^\circ}=0$. Hence $P^\circ$ is maximally isotropic.
Since $\Delta^\circ$ is a relative hyperplane, after twisting the vector bundle defining the projective bundle, write
\begin{equation}\label{eq:quotient-normal-form}
0\to\OO_{Y^\circ}\to V\to\cG\to0,
\end{equation}
with $X^\circ\simeq\PP(V)$ and $\Delta^\circ\simeq\PP(\cG)$. This is also the case $s=1$ of \cite[Lemma~5.15]{Langer24}. Moreover
\begin{equation}\label{eq:N-splitting}
N|_{X^\circ}\simeq\OO_{\pi^\circ}(1)\otimes(\pi^\circ)^*L
\end{equation}
for a line bundle $L$ on $Y^\circ$. The relative logarithmic Euler sequence, applied locally where \eqref{eq:quotient-normal-form} splits, identifies
\begin{equation}\label{eq:P-from-G}
P^\circ\simeq\OO_{\pi^\circ}(1)\otimes(\pi^\circ)^*(\cG^\vee).
\end{equation}
Maximal isotropy identifies $T_{X^\circ}(-\log\Delta^\circ)/P^\circ\simeq(P^\circ)^\vee\otimes N|_{X^\circ}.$
Comparing with the relative logarithmic tangent sequence gives
$
(P^\circ)^\vee\otimes N|_{X^\circ}\simeq(\pi^\circ)^*T_{Y^\circ}.
$
Equations~\eqref{eq:N-splitting} and \eqref{eq:P-from-G} give $\cG\simeq T_{Y^\circ}\otimes L^{-1}.$
Substitution in \eqref{eq:quotient-normal-form} gives the required exact sequence and the claimed description of $(X^\circ,\Delta^\circ)$.
\end{proof}

\begin{remark}\label{rem:rho-one-residual}
The Picard-rank-one branch in Theorem~\ref{thm:main}\,\textup{(iii)(ii)} is not empty. Proposition~\ref{prop:multi-hyperplane-example} with $s=m+1$ gives $(X,\Delta,N)=\bigl(\PP^{2m},H_1+\cdots+H_{m+1},\OO_{\PP^{2m}}(1)\bigr).$
The Picard-rank-one case with $-(K_X+\Delta)$ ample is distinct from the positive-dimensional fiber case. When $Y$ is a point, there is no positive-dimensional fiber to which the relative logarithmic tangent sequence can be restricted; the resulting conclusion is therefore $\rho(X)=1$ and $-(K_X+\Delta)\equiv mN$.
\end{remark}

\begin{proposition}[Criterion for the existence of a nondegenerate log-conformal structure]
\label{prop:converse-log-conformal-criterion}
Let $Y$ be a smooth complex variety of dimension $m$, let $L$ be a line bundle on $Y$, and let
\begin{equation}\label{eq:base-extension-criterion}
\begin{tikzcd}[column sep=large]
0 \arrow[r] &
\OO_Y \arrow[r] &
V \arrow[r] &
T_Y \otimes L^{-1} \arrow[r] &
0
\end{tikzcd}
\end{equation}
be an exact sequence of vector bundles. Set $X:=\PP(V), \qquad \Delta:=\PP(T_Y\otimes L^{-1})\subset X, \qquad \pi:X\to Y,$
with the Grothendieck convention $\PP(V)=\Proj(\Sym V)$, and let
$
N:=\OO_\pi(1)\otimes \pi^*L.
$
Set also $P:=T_{X/Y}(-\log\Delta)$ and $E:=T_X(-\log\Delta)$. Then $\Delta$ is a relative hyperplane divisor and the following hold:
\begin{enumerate}
\item There is a canonical isomorphism
\begin{equation}\label{eq:P-identification-criterion}
P \simeq \pi^*\Omega_Y^1 \otimes N.
\end{equation}

\item The relative logarithmic tangent sequence
\begin{equation}\label{eq:relative-log-sequence-criterion}
\begin{tikzcd}[column sep=large]
0 \arrow[r] &
P \arrow[r] &
E \arrow[r] &
\pi^*T_Y \arrow[r] &
0
\end{tikzcd}
\end{equation}
defines a class $\xi(E)\in \Ext^1_X(\pi^*T_Y,P) \simeq H^1\bigl(X,\mathcal{H}om(\pi^*T_Y,P)\bigr).$
Equation~\eqref{eq:P-identification-criterion} identifies
$$
\mathcal{H}om(\pi^*T_Y,P)
  \simeq \pi^*(\Omega_Y^1\otimes \Omega_Y^1)\otimes N
  \simeq
  \bigl(\pi^*\Sym^2\Omega_Y^1\otimes N\bigr)
  \oplus
  \bigl(\pi^*\Lambda^2\Omega_Y^1\otimes N\bigr).
$$

\item There exists a fiberwise nondegenerate tensor $g\in H^0\bigl(X,\Sym^2\Omega_X^1(\log\Delta)\otimes N\bigr)$
such that $P$ is maximally isotropic with respect to $g$ if and only if the component of $\xi(E)$ in $H^1\bigl(X,\pi^*\Sym^2\Omega_Y^1\otimes N\bigr)$
vanishes. That is,
\begin{equation}\label{eq:alternating-condition}
\xi(E)\in H^1\bigl(X,\pi^*\wedge^2\Omega_Y^1\otimes N\bigr)
\subset
H^1\bigl(X,\pi^*(\Omega_Y^1\otimes\Omega_Y^1)\otimes N\bigr).
\end{equation}
\end{enumerate}

In this situation, $P$ has rank $m=\frac12\rk E$, hence it is maximally isotropic whenever such a tensor $g$ exists.
\end{proposition}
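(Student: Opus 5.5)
The plan is to treat the three items in order, doing everything with local splittings and Čech cocycles on $Y$ (pulled back to $X$).

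For \textup{(i)}, I would start from the relative Euler sequence for $\PP(V)=\Proj(\Sym V)$,
$$
0\to\OO_X\to\pi^*V^\vee\otimes\OO_\pi(1)\to T_{X/Y}\to0 .
$$
The divisor $\Delta=\PP(T_Y\otimes L^{-1})$ is the zero locus of the section of $\OO_\pi(1)$ induced by $\OO_Y\to V$. Restricted to each fiber it is a hyperplane, so it is a relative hyperplane. I then claim that the subbundle $\pi^*\cG^\vee\otimes\OO_\pi(1)\subset\pi^*V^\vee\otimes\OO_\pi(1)$, where $\cG=T_Y\otimes L^{-1}$, maps isomorphically onto $T_{X/Y}(-\log\Delta)$. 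To check this, work locally on $Y$, where the sequence \eqref{eq:base-extension-criterion} splits. Then $X$ is locally $U\times\PP^m$ with $\Delta=U\times H$, and the assertion reduces to the computation $T_{\PP^m}(-\log H)\simeq\OO(1)^{\oplus m}$ of Proposition~\ref{prop:projective-example}, with the frame $\xi_i$ of Remark~\ref{rem:projective-tangent-frame}. One must then verify that the local identifications glue, i.e.\ that they are independent of the splitting. This holds because the image of $\OO_X$ in the Euler sequence is transverse to $\pi^*\cG^\vee\otimes\OO_\pi(1)$. This gives $P\simeq\pi^*(\Omega^1_Y\otimes L)\otimes\OO_\pi(1)=\pi^*\Omega^1_Y\otimes N$, the same identification as \eqref{eq:P-from-G}. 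Item \textup{(ii)} is then formal: the quotient of $E$ by $P$ is $\pi^*T_Y$, and
$$
\mathcal{H}om(\pi^*T_Y,P)\simeq\pi^*(\Omega^1_Y\otimes\Omega^1_Y)\otimes N,
$$
which splits into its symmetric and alternating parts in characteristic zero. The rank statement follows since $\rk P=m$ and $\rk E=2m$.

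For \textup{(iii)}, the necessity direction goes as follows. Suppose $g$ exists and $P$ is isotropic. Then $g^\sharp$ maps $P$ into $(E/P)^\vee\otimes N=\pi^*\Omega^1_Y\otimes N$. This map is injective by nondegeneracy, hence an isomorphism by rank. After rescaling by an automorphism of $P$, I expect it to agree with the canonical isomorphism of \textup{(i)}; at worst one replaces that isomorphism by the one induced by $g$. Hence $g^\sharp$ is an isomorphism from the extension $0\to P\to E\to\pi^*T_Y\to0$ to its twisted dual $0\to P\to E^\vee\otimes N\to\pi^*T_Y\to0$, and it is the identity on the outer terms. The class of the dual extension is the transpose of $\xi(E)$, up to a sign. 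I would then work in local frames: write $E=P\oplus Q_i$ on $U_i$ with transition $\begin{pmatrix}1&\varphi_{ij}\\0&1\end{pmatrix}$, where $[\varphi_{ij}]=\xi(E)$, and write $g$ on $U_i$ as $\begin{pmatrix}0&1\\1&s_i\end{pmatrix}$ with $s_i$ symmetric. Compatibility on overlaps reads $s_i-s_j=\varphi_{ij}+\varphi_{ij}^{t}$. So the symmetric component of $\xi(E)$ is a coboundary, i.e.\ it vanishes in $H^1(X,\pi^*\Sym^2\Omega^1_Y\otimes N)$.

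Conversely, if that component vanishes, pick symmetric $s_i$ with $s_i-s_j=\varphi_{ij}+\varphi_{ij}^t$ and define $g$ locally by the block matrix above. The same computation shows the local forms glue. Each block matrix has determinant $\pm1$, so $g$ is everywhere nondegenerate, and $P$ is isotropic by construction.

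The main obstacle I foresee is bookkeeping of conventions rather than substance. The sign in the dual extension class, and the requirement that the pairing $P\otimes\pi^*T_Y\to N$ induced by $g$ is the canonical contraction from \textup{(i)}, must be consistent. Otherwise the criterion would pick out the alternating part only up to an automorphism of $P$, and the cocycle equation would acquire extra terms. To control this, I would fix all identifications once on the split local model $U\times(\PP^m,H)$, using Remark~\ref{rem:projective-tangent-frame}, and check that the cocycle equation holds exactly there before globalising.
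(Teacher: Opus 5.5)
Your route is the paper's: (i) via the relative Euler sequence checked on the split model $U\times(\PP^m,H)$, (ii) formally, and (iii) by comparing $g$ with the Čech transition data of local splittings of $0\to P\to E\to\pi^*T_Y\to0$, with the converse obtained by gluing block-form local models. Your local form $\begin{pmatrix}0&1\\1&s_i\end{pmatrix}$ with $s_i-s_j=\varphi_{ij}+\varphi_{ij}^{t}$ is more accurate than the paper's step. The paper claims $g$ is hyperbolic in \emph{every} local splitting, which holds only after changing each splitting by $-\tfrac12 s_i$ to make the complement isotropic; your coboundary formulation handles this automatically. Your converse direction is fine.

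The point you defer, however, is a genuine gap. Neither your plan nor the paper closes it: the paper writes $E/P\simeq P^\vee\otimes N\simeq\pi^*T_Y$ without comparing the map induced by $g$ with (i).

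\emph{What necessity actually gives.} Let $\langle\ ,\ \rangle_0\colon P\otimes\pi^*T_Y\to N$ be the contraction coming from (i). Let $\langle\alpha,u\rangle_g:=g(\alpha,\tilde u)$ be the pairing induced by $g$; it does not depend on the lift $\tilde u$ because $P$ is isotropic. Both pairings are perfect, so $\langle\alpha,u\rangle_g=\langle\alpha,\phi u\rangle_0$ for an automorphism of $\pi^*T_Y$. Since $\pi_*\OO_X=\OO_Y$, this automorphism is $\pi^*\phi$ for some $\phi\in\Aut(T_Y)$. Set $\hat\varphi_{ij}(u,v):=\langle\varphi_{ij}u,v\rangle_0$. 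Your overlap computation, redone with the true pairing, reads $s_i(u,v)-s_j(u,v)=\hat\varphi_{ij}(u,\phi v)+\hat\varphi_{ij}(v,\phi u)$. So necessity only shows that the symmetric part of $(1\otimes\phi^{t})\,\xi(E)$ vanishes.

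\emph{Why your proposed fixes do not close it.} An automorphism of $P$ can make the two identifications agree, but the criterion is stated with respect to (i). The twisted and untwisted conditions coincide when $\phi$ is a scalar, not in general. Replacing the isomorphism of (i) by the one induced by $g$ changes the statement, because the $\Sym^2/\wedge^2$ decomposition in (ii)--(iii) is defined through (i). Checking on the split local model cannot detect $\phi$ either. On $U\times(\PP^m,H)$, the form $((\alpha,u),(\beta,v))\mapsto\alpha(\phi v)+\beta(\phi u)$ is a valid log-conformal tensor with $P$ isotropic for every invertible $\phi$, so $\phi$ is genuinely global information.

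\emph{How to finish.} You must do one of the following:
\begin{enumerate}
\item show that $\phi$ can be taken scalar, which is automatic for instance when $H^0(Y,\mathcal{E}nd\,T_Y)=\C$;
\item state the criterion as ``the symmetric part of $(1\otimes\phi^{t})\,\xi(E)$ vanishes for some $\phi\in\Aut(T_Y)$''. Your sufficiency argument proves this version verbatim, using local models $\begin{pmatrix}0&\phi\\ \phi^{t}&s_i\end{pmatrix}$.
\end{enumerate}

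A minor correction in (i). The image of $\OO_X$ in $\pi^*V^\vee\otimes\OO_\pi(1)$ is not transverse to $\pi^*\cG^\vee\otimes\OO_\pi(1)$ along $\Delta$, since there the tautological line lies in $\cG^\vee$. This is exactly why the composite drops rank along $\Delta$. No gluing problem arises anyway, because the composite $\pi^*\cG^\vee\otimes\OO_\pi(1)\hookrightarrow\pi^*V^\vee\otimes\OO_\pi(1)\to T_{X/Y}$ is globally defined. The split model is needed only to identify its image with $T_{X/Y}(-\log\Delta)$.
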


\begin{proof}
Since $\Delta$ is a relative hyperplane divisor in the projective bundle $\pi:X=\PP(V)\to Y$, the relative logarithmic Euler sequence can be computed locally on $Y$, where the extension \eqref{eq:base-extension-criterion} splits. It gives $\Omega^1_{X/Y}(\log\Delta)\simeq \pi^*(T_Y\otimes L^{-1})(-1).$
That is, this is the $s=1$ relative hyperplane calculation underlying \cite[Lemmas~5.15--5.17]{Langer24}. Since $N=\OO_\pi(1)\otimes \pi^*L$, dualising yields
$$
P=T_{X/Y}(-\log\Delta)
 \simeq \OO_\pi(1)\otimes \pi^*(\Omega_Y^1\otimes L)
 \simeq \pi^*\Omega_Y^1\otimes N.
$$
This proves \eqref{eq:P-identification-criterion}.
The exact sequence \eqref{eq:relative-log-sequence-criterion} is the standard relative logarithmic tangent sequence for the smooth morphism $\pi$ with relative simple normal crossing divisor $\Delta$. Its extension class $\xi(E)$ therefore lies in $\Ext^1_X(\pi^*T_Y,P)\simeq H^1\bigl(X,\mathcal{H}om(\pi^*T_Y,P)\bigr).$
The identification~\eqref{eq:P-identification-criterion} gives
$$
\mathcal{H}om(\pi^*T_Y,P)
 \simeq (\pi^*T_Y)^\vee\otimes P
 \simeq \pi^*\Omega_Y^1\otimes \pi^*\Omega_Y^1\otimes N
 \simeq \pi^*(\Omega_Y^1\otimes\Omega_Y^1)\otimes N.
$$
In characteristic $0$, there is the canonical decomposition $\Omega_Y^1\otimes\Omega_Y^1 \simeq \Sym^2\Omega_Y^1 \oplus \wedge^2\Omega_Y^1,$
and hence the stated decomposition of $\mathcal{H}om(\pi^*T_Y,P)$.
For \textup{(iii)}, assume first that there exists a tensor
$
g\in H^0\bigl(X,\Sym^2\Omega_X^1(\log\Delta)\otimes N\bigr)
$
which is fiberwise nondegenerate and for which $P$ is isotropic. Since $\rk P = \rk T_{X/Y}(-\log\Delta)=m \qquad\text{and}\qquad \rk E = \rk T_X(-\log\Delta)=2m,$
the isotropic subbundle $P$ is automatically maximal.
Since $g$ is nondegenerate, contraction with $g$ yields an isomorphism
$
g^\sharp:E\xrightarrow{\sim} E^\vee\otimes N.
$
Since $P$ is isotropic, the restriction of $g^\sharp$ to $P$ vanishes on $P$, hence it factors through a morphism $\bar g^\sharp:E/P \to P^\vee\otimes N.$
The induced map $\bar g^\sharp$ is an isomorphism. Indeed, if a class $\bar e\in E/P$ maps to $0$, then any lift $e\in E$ satisfies $g(e,P)=0$, so the subbundle generated by $P$ and $e$ is isotropic. By maximality of $P$, this implies $e\in P$, hence $\bar e=0$. Since both bundles have rank $m$, $\bar g^\sharp$ is an isomorphism. The quotient map in \eqref{eq:relative-log-sequence-criterion} therefore identifies
\begin{equation}\label{eq:quotient-identification-proof}
E/P \simeq P^\vee\otimes N \simeq \pi^*T_Y.
\end{equation}
Choose an open cover $\{U_i\}$ of $X$ such that \eqref{eq:relative-log-sequence-criterion} splits on each $U_i$:
$
E|_{U_i}\simeq P|_{U_i}\oplus \pi^*T_Y|_{U_i}.
$
Via \eqref{eq:quotient-identification-proof}, in each such splitting the form $g$ takes the matrix form
$$
\begin{pmatrix}
0 & I\\
I & 0
\end{pmatrix}
$$
with respect to the decomposition $P\oplus \pi^*T_Y$. Indeed, $g|_{P\otimes P}=0$ by isotropy, $g$ identifies the quotient with $P^\vee\otimes N$, and symmetry forces exactly the hyperbolic form $g_i((\alpha,u),(\beta,v)):=\alpha(v)+\beta(u),$
where $\alpha,\beta\in P|_{U_i}\simeq (\pi^*T_Y|_{U_i})^\vee\otimes N$ and $u,v\in \pi^*T_Y|_{U_i}$.
On overlaps $U_{ij}:=U_i\cap U_j$, the two splittings differ by a unique morphism
$B_{ij}:\pi^*T_Y|_{U_{ij}}\to P|_{U_{ij}}$.
Thus the transition from the $i$-th splitting to the $j$-th splitting is
$
(\alpha,u)\longmapsto (\alpha+B_{ij}(u),u).
$
A direct computation gives $g_j((\alpha,u),(\beta,v)) = g_i((\alpha,u),(\beta,v)) + B_{ij}(u)(v)+B_{ij}(v)(u).$
Since both local expressions come from the same global form $g$, they must coincide, and therefore $B_{ij}(u)(v)+B_{ij}(v)(u)=0 \qquad\text{for all }u,v.$
Hence each $B_{ij}$ is alternating; that is, it lies in
$
\wedge^2(\pi^*\Omega_Y^1)\otimes N \subset \mathcal{H}om(\pi^*T_Y,P).
$
Hence the Čech representative of $\xi(E)$ has values in the alternating summand, proving \eqref{eq:alternating-condition}.
Conversely, assume that the extension class $\xi(E)$ lies in $H^1\bigl(X,\pi^*\wedge^2\Omega_Y^1\otimes N\bigr).$
Choose a cover $\{U_i\}$ and splittings
$
E|_{U_i}\simeq P|_{U_i}\oplus \pi^*T_Y|_{U_i}
$
whose transition maps are determined by a Čech $1$-cocycle
$$
B_{ij}\in \Gamma\bigl(U_{ij},\wedge^2(\pi^*\Omega_Y^1)\otimes N\bigr)
\subset \Gamma\bigl(U_{ij},\mathcal{H}om(\pi^*T_Y,P)\bigr).
$$
On each $U_i$, define an $N$-valued symmetric bilinear form by $g_i((\alpha,u),(\beta,v)):=\alpha(v)+\beta(u).$
This form is evidently symmetric and nondegenerate, and $P|_{U_i}$ is isotropic for $g_i$.
The local forms $g_i$ glue. On $U_{ij}$ the change of splitting is again $(\alpha,u)\longmapsto (\alpha+B_{ij}(u),u).$
Since $B_{ij}$ is alternating,
$
B_{ij}(u)(v)+B_{ij}(v)(u)=0.
$
The preceding transition formula and the alternating property imply that
$g_j=g_i$ on $U_{ij}$.
Hence the $g_i$ define a global section $g\in H^0\bigl(X,\Sym^2\Omega_X^1(\log\Delta)\otimes N\bigr).$
Since nondegeneracy is local and each $g_i$ is nondegenerate, $g$ is fiberwise nondegenerate. Moreover, $P$ is isotropic by construction and hence maximally isotropic, since $\rk P=m=\frac12\rk E$.
\end{proof}

\section{On the numerically trivial case: semi-abelian uniformization}\label{sec:trivial}

Throughout this section assume that $K_X+\Delta\equiv0$ and set $M:=X\setminus\Delta$. Under the metric hypotheses below, a log-conformal tensor forces the restricted holonomy of a complete Ricci-flat K{\"a}hler metric on $M$ to be trivial. Trivial monodromy of the resulting flat connection then yields a trivial logarithmic tangent bundle and a semi-abelian uniformization.

Recent work of Collins and Guenancia \cite{CollinsGuenancia25} shows that, for log Calabi--Yau complements endowed with complete Ricci-flat K{\"a}hler metrics, Bochner-type phenomena and holonomy properties may depend delicately on the geometry of the boundary. The following hypotheses are additional assumptions; in particular, Bochner extension is not deduced here from $K_X+\Delta\equiv0$.

Throughout, $(X,\Delta)$ is a smooth projective simple normal crossing pair of dimension $n\ge 2$.

\begin{definition}\label{def:Bochner-holonomy}
Let $\omega$ be a complete Ricci-flat K{\"a}hler metric on $M$. We say that $(M,\omega)$ satisfies:
\begin{enumerate}
\item[\hypertarget{Bochner-extension}{\textup{(B)}}]
\emph{Bochner extension} if for every $p,q\ge 0$ and every logarithmic tensor bundle $\mathcal E_{p,q}:=T_X(-\log \Delta)^{\otimes p}\otimes \Omega_X^1(\log \Delta)^{\otimes q},$
restriction induces a bijection between $H^0(X,\mathcal E_{p,q})$ and the space of $\nabla^\omega$-parallel sections of $\mathcal E_{p,q}|_M$;

\item[\hypertarget{holonomy-condition}{\textup{(H)}}]
\emph{restricted-holonomy condition} if $\Hol^\circ(\omega)$ is trivial or its action on $T_{M,x}$ is irreducible for some, hence any, point $x\in M$.
\end{enumerate}
\end{definition}

\begin{definition}\label{def:toroidal}
A \emph{toroidal compactification} of a semi-abelian variety $G$ is a smooth projective simple normal crossing pair $(X,\Delta)$ endowed with an algebraic action of $G$ on $X$ whose open orbit is $X\setminus \Delta$.
\end{definition}

\begin{proposition}\label{prop:restricted-holonomy-flat}
Assume $K_X+\Delta\equiv 0$ and that $M$ carries a complete Ricci-flat K{\"a}hler metric $\omega$ satisfying \textup{(B)} and \textup{(H)}. Assume moreover that $(X,\Delta)$ carries a log-conformal tensor $(N,g)$ with $N\simeq \OO_X$. Then $\Hol^\circ(\omega)=\{1\}$. In particular, $\omega$ is flat.
\end{proposition}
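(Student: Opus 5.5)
Since $N\simeq\OO_X$, the log-conformal tensor is a global section $g\in H^0(X,\Sym^2\Omega_X^1(\log\Delta))\subset H^0(X,\mathcal E_{0,2})$, fiberwise nondegenerate. By \hyperlink{Bochner-extension}{(B)}, its restriction $g|_M$ is $\nabla^\omega$-parallel. Hence the holonomy group $\Hol(\omega)$ at a point $x\in M$ preserves the nondegenerate complex symmetric bilinear form $g_x$ on $T_{M,x}$. It also preserves the Hermitian metric $h_x$ induced by $\omega$. So $\Hol^\circ(\omega)\subset U(T_{M,x},h_x)\cap O(T_{M,x},g_x)$.

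By \hyperlink{holonomy-condition}{(H)}, it suffices to rule out the case where $\Hol^\circ(\omega)$ acts irreducibly on $T_{M,x}$, with $n\ge2$. First, $\Hol^\circ$ is contained in $SU(n)$, since $\omega$ is Ricci-flat. Suppose it acts irreducibly. Define $A:=h_x^{-1}\circ g_x^\flat$, the composition of $g_x^\flat:T\to T^\vee$ with the inverse of the conjugate-linear isomorphism $T\to T^\vee$ given by $h$. Then $A$ is a conjugate-linear automorphism of $T_{M,x}$ commuting with $\Hol^\circ$. Its square $A^2$ is complex linear, commutes with the irreducible action, and is Hermitian positive. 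By Schur, $A^2=c\cdot\mathrm{id}$ with $c>0$, so after rescaling $A$ is a real structure commuting with $\Hol^\circ$. Then $\Hol^\circ\subset SU(n)\cap O(n,\C)=SO(n,\R)$, acting on $T=\R^n\otimes\C$.

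Next I apply the Berger classification, or directly de Rham plus the Kähler condition. A Riemannian holonomy group that preserves the complex structure $J$ and also a real structure $\sigma$ anticommuting with $J$ (conjugate-linear) preserves the quaternionic-type structure $J,\sigma J$ only if $\sigma$ is parallel. Here $\sigma=A$ is indeed parallel, since it is built from parallel $g$ and $h$. The real $2n$-dimensional holonomy representation then splits as $T_\R=\mathrm{Re}\oplus \mathrm{Im}$, where $\mathrm{Re}$ and $\mathrm{Im}$ are the $\pm1$ eigenspaces of $\sigma$. These are each $\Hol^\circ$-invariant and interchanged by $J$, so they are totally real.

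By the de Rham decomposition on the universal cover (using completeness), parallel distributions $\mathrm{Re}$ and $J\mathrm{Re}=\mathrm{Im}$ give a local product. Then $\omega(u,v)=h(Ju,v)$ pairs $\mathrm{Re}$ with $\mathrm{Im}$. So the curvature $R(u,Jv)$ must respect both factors, while $J$ swaps them. Since $R(X,Y)$ commutes with $J$ and preserves each factor, taking $X\in\mathrm{Re}$ and $Y\in\mathrm{Im}$ forces $R(X,Y)=0$ on the product. Mixed curvature vanishes on a Riemannian product, and the Kähler identity $R(JX,JY)=R(X,Y)$ transports pure curvature of $\mathrm{Re}$ into mixed terms, so $R\equiv0$. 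By Ambrose–Singer, $\Hol^\circ=\{1\}$.

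This contradicts irreducibility for $n\ge2$, because a trivial group acting irreducibly forces dimension $1$. So (H) leaves only $\Hol^\circ(\omega)=\{1\}$, and then $\omega$ is flat.

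The main obstacle is making the final step rigorous: showing that a Kähler holonomy compatible with a parallel real structure is trivial. An alternative is the algebraic route. An irreducible subgroup of $SO(n)\cap SU(n)$ arising as Kähler holonomy would be a real irreducible Berger group acting on the complexification. Its Lie algebra lies in $\mathfrak{so}(n)\subset\mathfrak{u}(n)$, so it commutes with $J$ only trivially on the curvature level: $R\in\Sym^2(\mathfrak{hol})$ must satisfy the Bianchi identity with $J$-invariance. I would verify that $\mathfrak{so}(n)$-valued Kähler curvature tensors vanish via the Bianchi identity.
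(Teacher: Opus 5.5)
Your proof is correct in substance, but it follows a genuinely different route from the paper's.

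\textbf{Comparison of routes.} The paper argues by classification. By Berger--Simons, a nonflat irreducible Ricci-flat Kähler metric has $\Hol^\circ(\omega)$ equal to $\mathrm{SU}(n)$ or $\mathrm{Sp}(n/2)$, and neither group fixes a nonzero vector of $\Sym^2(V^\vee)$. You instead use Schur's lemma to turn the parallel pair $(g_x,h_x)$ into a parallel real structure $\sigma=A/\sqrt{c}$. Your checks are right: $A$ is conjugate-linear, commutes with the holonomy, and $A^2$ is Hermitian positive since $h(A^2v,w)=h(Aw,Av)$. It follows that the complex-irreducible holonomy is of real type, the Riemannian holonomy splits along two $J$-swapped totally real distributions, and the curvature is forced to vanish.

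\textbf{What each route buys.} Yours is more elementary and more general. It uses neither Ricci-flatness nor Berger's list, so the inclusion $\Hol^\circ\subset\mathrm{SU}(n)$ you record is never needed. It also deals head-on with a configuration Berger's theorem does not literally cover: holonomy that is complex-irreducible but Riemannian-reducible. The paper's appeal to Berger tacitly uses the standard fact that nonflat de Rham factors of a Kähler manifold are $J$-invariant. The paper's route buys brevity, once the classification is granted.

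\textbf{The step you flag as the main obstacle.} It is short once stated cleanly, but as written it is muddled. The "quaternionic-type" sentence is not needed: $\sigma J$ is an involution. The identity $R(JX,JY)=R(X,Y)$ sends pure $D_1$-curvature to pure $D_2$-curvature, not to mixed terms. A clean version runs as follows.
\begin{enumerate}
\item From $h(\sigma v,\sigma w)=\overline{h(v,w)}$, the map $\sigma$ is an isometry of $\mathrm{Re}\,h$. Hence its eigendistributions $D_1$ and $D_2=JD_1$ are orthogonal and parallel on $M$.
\item Every $R(Z,W)$ preserves $D_1$ and $D_2$. Pair symmetry then gives $\langle R(X,Y)Z,W\rangle=\langle R(Z,W)X,Y\rangle=0$ for $X\in D_1$, $Y\in D_2$, so mixed curvature vanishes.
\item The first Bianchi identity then shows that $R(X,Y)$ with $X,Y\in D_1$ annihilates $D_2$.
\item This $R(X,Y)$ commutes with $J$ and $JD_2=D_1$, so it annihilates $D_1$ as well. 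The same argument applies to $D_2$.
\end{enumerate}
Hence $R\equiv 0$, and Ambrose--Singer gives $\Hol^\circ(\omega)=\{1\}$. Neither the de Rham theorem nor completeness is needed. This is exactly your proposed algebraic alternative: the diagonal $\mathfrak{so}(n)\subset\mathfrak{so}(2n)$ admits no nonzero algebraic curvature tensors.
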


\begin{proof}
Since $N\simeq \OO_X$, the log-conformal tensor gives a nonzero section $g\in H^0\bigl(X,\Sym^2\Omega_X^1(\log \Delta)\bigr)$. By \textup{(B)}, its restriction $g|_M$ is $\nabla^\omega$-parallel.
Fix a point $x\in M$, and write $V:=T_{M,x}$. Since $g|_M$ is parallel, the tensor $g_x\in \Sym^2(V^\vee)$ is invariant under $\Hol^\circ(\omega)$. If $\Hol^\circ(\omega)$ is trivial, there is nothing to prove. Otherwise, condition \textup{(H)} implies that its action on $V$ is irreducible. By Berger--Simons theory, the restricted holonomy of a nonflat irreducible Ricci-flat K{\"a}hler metric is $\mathrm{SU}(n)$ or, if $n$ is even, $\mathrm{Sp}(n/2)$, \cite{Besse87,Salamon89,Joyce07}. In the standard representation, neither group preserves a nonzero symmetric complex bilinear form: $\mathrm{SU}(n)$ has no invariant vector in $\Sym^2(V^\vee)$, while $\mathrm{Sp}(n/2)$ preserves its defining skew-symmetric form rather than a symmetric one. Hence $\Sym^2(V^\vee)^{\Hol^\circ(\omega)}=0$ in the nontrivial irreducible cases. Since $g_x\neq0$, these cases are impossible. Thus $\Hol^\circ(\omega)=\{1\}$, and $\omega$ is flat.
\end{proof}

\begin{theorem}\label{thm:semiabelian}
Assume $K_X+\Delta\equiv 0$ and that $M$ carries a complete Ricci-flat K{\"a}hler metric $\omega$ satisfying \textup{(B)} and \textup{(H)}. Assume moreover that $(X,\Delta)$ carries a log-conformal tensor $(N,g)$ with $N\simeq \OO_X$, and that the flat connection on $T_M$ has trivial monodromy. Then $M$ is semi-abelian and $(X,\Delta)$ is a toroidal compactification of $M$.
\end{theorem}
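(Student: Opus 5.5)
Proposition~\ref{prop:restricted-holonomy-flat} gives $\Hol^\circ(\omega)=\{1\}$, so the Levi-Civita connection $\nabla^\omega$ on $T_M$ is flat. By hypothesis its monodromy is trivial. Fix $x\in M$ and a basis $v_1,\dots,v_n$ of $T_{M,x}$. Parallel transport along arbitrary paths is then path-independent, so the $v_i$ extend to global $\nabla^\omega$-parallel holomorphic vector fields $\xi_1,\dots,\xi_n$ on $M$. These form a frame of $T_M$ at every point. Each $\xi_i$ is a parallel section of $\mathcal E_{1,0}|_M=T_X(-\log\Delta)|_M$. By \textup{(B)} with $(p,q)=(1,0)$, each $\xi_i$ extends to a global section $\widetilde\xi_i\in H^0(X,T_X(-\log\Delta))$. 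The dual coframe is parallel as well, so the same argument with $(p,q)=(0,1)$ gives logarithmic $1$-forms $\widetilde\theta_1,\dots,\widetilde\theta_n\in H^0(X,\Omega^1_X(\log\Delta))$ with $\widetilde\theta_i(\widetilde\xi_j)=\delta_{ij}$ on $M$, hence on all of $X$. Therefore $\widetilde\xi_1\wedge\cdots\wedge\widetilde\xi_n$ is a nowhere-vanishing section of $\det T_X(-\log\Delta)$, and
\[
T_X(-\log\Delta)\simeq\OO_X^{\oplus n}.
\]
This is the logarithmic parallelisation.

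Next I show that the $\widetilde\xi_i$ commute. For the Levi-Civita connection, which is torsion-free, $[\xi_i,\xi_j]=\nabla_{\xi_i}\xi_j-\nabla_{\xi_j}\xi_i=0$ on $M$. By density the brackets vanish on $X$. So $\mathfrak g:=\bigoplus_i\C\widetilde\xi_i$ is an $n$-dimensional abelian Lie subalgebra of $H^0(X,T_X)$. Its vector fields are tangent to $\Delta$ and span $T_{X,y}$ at every $y\in M$.

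Now I integrate. $X$ is projective, so $\Aut^0(X)$ is an algebraic group with Lie algebra $H^0(X,T_X)$. Let $G\subset\Aut^0(X)$ be the connected subgroup integrating $\mathfrak g$; it is commutative. Every vector field in $\mathfrak g$ is tangent to $\Delta$, so $G$ preserves each component of $\Delta$ and hence preserves $M$. On $M$ the infinitesimal action is pointwise an isomorphism $\mathfrak g\to T_{M,y}$, so $G\cdot y$ is open in $M$ and the stabilizers are discrete. Since $M$ is connected and the orbits are open, $M$ is a single orbit. The stabilizers of points of $M$ in the commutative group $G$ are all equal to a fixed discrete subgroup, which acts trivially on $M$, hence trivially on $X$ by density. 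It is therefore trivial because $G\subset\Aut(X)$. Thus $M\simeq G$. Moreover $G$ is closed, since it is the stabilizer in $\Aut^0(X)$ of the orbit structure; alternatively, I would replace $G$ by its Zariski closure and rerun the orbit argument with the dimension count.

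Finally, a connected commutative algebraic group has the form $\mathbb G_a^u\times\mathbb G_m^t$-by-abelian. The $\mathbb G_a$-factor must vanish: a unipotent group acting on a smooth projective snc pair with $K_X+\Delta\equiv0$ and a logarithmic parallelisation would yield a vector field with non-logarithmic (higher-order) zeros along the boundary, contradicting $T_X(-\log\Delta)\simeq\OO_X^{\oplus n}$ at boundary points. Equivalently, by Winkelmann's classification, a compact complex manifold with snc boundary whose logarithmic tangent bundle is trivial is a toroidal compactification of a semi-abelian variety. So $G$ is semi-abelian, the open orbit is $M=X\setminus\Delta$, and $(X,\Delta)$ is a toroidal compactification in the sense of Definition~\ref{def:toroidal}.

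\textbf{Main obstacle.} I expect the hardest step to be this last one: excluding additive factors and verifying algebraicity and closedness of $G$. There I would rely first on Winkelmann's theorem on logarithmically parallelisable compact manifolds.
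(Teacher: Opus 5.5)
Your proposal is correct and follows the paper's proof: flatness from Proposition~\ref{prop:restricted-holonomy-flat}, a global parallel frame from trivial monodromy, extension to logarithmic vector fields by \textup{(B)}, triviality of $T_X(-\log\Delta)$, and then Winkelmann's theorem. The only real variation is the trivialization step. You extend the dual coframe via \textup{(B)} with $(p,q)=(0,1)$ and use $\widetilde\theta_i(\widetilde\xi_j)\equiv\delta_{ij}$ on $X$, whereas the paper shows that the zero divisor of $s_1\wedge\cdots\wedge s_n\in H^0\bigl(X,\OO_X(-(K_X+\Delta))\bigr)$ is effective and numerically trivial, hence zero; your version does not need $K_X+\Delta\equiv0$ at that point. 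Your intermediate integration sketch in $\Aut^0(X)$ is unnecessary once you invoke Winkelmann, and as written its closedness and $\mathbb G_a$-exclusion steps are only heuristic.
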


\begin{proof}
By Proposition~\ref{prop:restricted-holonomy-flat}, the metric $\omega$ is flat. Since the flat connection on $T_M$ has trivial monodromy, the tangent bundle $T_M$ admits a global frame of parallel holomorphic vector fields. By \textup{(B)}, these vector fields extend uniquely across the boundary to global logarithmic vector fields $s_1,\dots,s_n\in H^0\bigl(X,T_X(-\log \Delta)\bigr)$. Their restrictions form a frame of $T_M$, so the wedge $s_1\wedge\cdots\wedge s_n$ is a nonzero global section of $\det T_X(-\log \Delta)\simeq \OO_X\bigl(-(K_X+\Delta)\bigr)$.

Since $K_X+\Delta\equiv 0$, the line bundle $\OO_X(-(K_X+\Delta))$ is numerically trivial. Let $D$ be the zero divisor of $s_1\wedge\cdots\wedge s_n$. Then $D$ is an effective divisor numerically equivalent to zero. This forces $D=0$: indeed, if $A$ is any ample divisor on $X$ and $D\neq 0$, then $A^{n-1}\cdot D>0$, contradicting the numerical triviality of $D$. Hence $s_1\wedge\cdots\wedge s_n$ is nowhere vanishing, and the sections $s_1,\dots,s_n$ trivialize $T_X(-\log \Delta)$. Hence $T_X(-\log \Delta)\simeq \OO_X^{\oplus n}$.

By Winkelmann's criterion \cite[Theorem~1]{Winkelmann04}, triviality of the logarithmic tangent bundle on a smooth projective simple normal crossing compactification is equivalent to the existence of a semi-abelian algebraic group acting on $X$ with open orbit $M$. Thus $M$ is semi-abelian and $(X,\Delta)$ is a toroidal compactification of $M$ in the sense of Definition~\ref{def:toroidal}.
\end{proof}

\section{A logarithmic Cartan-geometric interpretation}\label{subsec:cartan-interpretation}

The non-nef alternatives also admit a Cartan-geometric interpretation. The discussion uses only the logarithmic conformal Cartan structure needed for the quadric, the affine-logarithmic projective model, and the split-null case.

Let $V$ be a complex vector space of dimension $n+2$ endowed with a nondegenerate quadratic form $q$, let
$
G:=SO(V,q),
$
and let $P\subset G$ be the stabiliser of a fixed isotropic line $L_0\subset V$. Then the smooth quadric $Q^n$ is the homogeneous space
\begin{equation}\label{eq:quadric-homogeneous-space}
Q^n\simeq G/P.
\end{equation}
The projective geometry of rational homogeneous varieties in their minimal homogeneous embeddings is discussed in \cite{LandsbergManivel03}. Its Lie algebra admits a $|1|$-grading
$$
\mathfrak g=\mathfrak g_{-1}\oplus \mathfrak g_0\oplus \mathfrak g_1,
\qquad
\mathfrak g_0\simeq \mathfrak{co}(n,\C),
\qquad
\mathfrak p=\mathfrak g_0\oplus \mathfrak g_1,
$$
which is the standard algebraic structure underlying holomorphic conformal geometry and its Cartan-geometric formulation, \cite{Sharpe97,CapSchichl00,CapSlovak03,CapSlovak09}. For the logarithmic extension of Cartan geometry to a simple normal crossing boundary, compare \cite[\S3.1]{BiswasDumitrescuMcKay20}.

\begin{definition}\label{def:log-cartan}
Let $(X,\Delta)$ be a smooth projective simple normal crossing pair. A \emph{logarithmic conformal Cartan geometry} of type $(G,P)$ on $(X,\Delta)$ consists of a holomorphic principal $P$-bundle $\pi:\mathcal G\to X$ together with a $\mathfrak g$-valued logarithmic $1$-form
$$
\omega_{\log}\in H^0\bigl(\mathcal G,\Omega^1_{\mathcal G}(\log \mathcal G_\Delta)\otimes \mathfrak g\bigr),
\qquad
\mathcal G_\Delta:=\pi^{-1}(\Delta),
$$
such that:
\begin{enumerate}
\item for every $u\in \mathcal G$, the induced linear map
$
\omega_{\log,u}:T_{\mathcal G,u}(-\log \mathcal G_\Delta)\xrightarrow{\sim}\mathfrak g
$
is an isomorphism;
\item for every $p\in P$, $R_p^*\omega_{\log}=\operatorname{Ad}(p^{-1})\omega_{\log}$;
\item if $A\in \mathfrak p$ and $\zeta_A$ is the fundamental vector field on $\mathcal G$ generated by $A$, then
\begin{equation}\label{eq:fundamental-vector-cartan}
\omega_{\log}(\zeta_A)=A.
\end{equation}
\end{enumerate}
An \emph{exact logarithmic Weyl scale} is a reduction of structure group from $P$ to $G_0:=CO(n,\C)$, that is a section of the quotient bundle $\mathcal G/\exp(\mathfrak g_1)\to X$, whose restriction to $X\setminus \Delta$ is an ordinary exact Weyl structure in the sense of parabolic conformal geometry, \cite{CapSlovak03}, \cite[Chapter~5]{CapSlovak09}. Compare also the general Cartan-geometric background in \cite[Chapter~2]{Sharpe97} and the logarithmic Cartan-geometric setup in \cite[\S3.1]{BiswasDumitrescuMcKay20}.
\end{definition}

On the open part $U:=X\setminus \Delta$, an exact Weyl scale decomposes the Cartan connection into the familiar three pieces $\sigma^*\omega=\theta+\gamma+\rho,$
where $\theta$ is the soldering form with values in $\mathfrak g_{-1}$, $\gamma$ is the Weyl connection with values in $\mathfrak g_0$, and $\rho$ is the Rho-tensor with values in $\mathfrak g_1$; compare \cite[Chapter~5]{CapSlovak09}. In the logarithmic setting one asks that these pieces extend across $\Delta$ with at most logarithmic poles.

\begin{proposition}\label{prop:cartan-implies-tensor}
Assume that $(X,\Delta)$ carries a logarithmic conformal Cartan geometry $(\mathcal G,\omega_{\log})$ of type $(G,P)$. Then there is a canonically induced log-conformal tensor on $(X,\Delta)$. More precisely, if $L:=\mathcal G\times_P L_0, \qquad N:=L^{-2},$
then there is a canonical isomorphism
\begin{equation}\label{eq:tangent-associated-bundle}
T_X(-\log \Delta)\simeq \mathcal G\times_P(\mathfrak g/\mathfrak p)
\end{equation}
and a canonically induced section
$
g\in H^0\bigl(X,\Sym^2\Omega_X^1(\log \Delta)\otimes N\bigr)
$
which is fiberwise nondegenerate.
\end{proposition}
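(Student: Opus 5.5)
The proof has three steps: identify the logarithmic tangent bundle, build the conformal metric from the Killing form, and check nondegeneracy.

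\textbf{Step 1: the isomorphism \eqref{eq:tangent-associated-bundle}.} Condition (i) of Definition~\ref{def:log-cartan} gives a trivialisation
\[
T_{\mathcal G}(-\log\mathcal G_\Delta)\simeq \mathcal G\times\mathfrak g .
\]
By (iii), the vertical subbundle, spanned by the fundamental fields $\zeta_A$ with $A\in\mathfrak p$, corresponds to $\mathcal G\times\mathfrak p$. Since $\mathcal G_\Delta=\pi^{-1}(\Delta)$ is a pullback and $\pi$ is smooth, $d\pi$ induces a surjection $T_{\mathcal G}(-\log\mathcal G_\Delta)\to\pi^*T_X(-\log\Delta)$ with kernel the vertical bundle; this is checked in local product coordinates $U\times P$. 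Hence
\[
\pi^*T_X(-\log\Delta)\simeq\mathcal G\times(\mathfrak g/\mathfrak p).
\]
By (ii), the trivialisation is $P$-equivariant for the adjoint action on $\mathfrak g/\mathfrak p$. Descending along $\pi$ therefore gives $T_X(-\log\Delta)\simeq\mathcal G\times_P(\mathfrak g/\mathfrak p)$.

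\textbf{Step 2: the tensor.} Use the $|1|$-grading to identify $\mathfrak g/\mathfrak p\simeq\mathfrak g_{-1}$ as $G_0$-modules, with $\mathfrak g_1$ acting trivially on the associated graded. Concretely, take an isotropic line $L_0'$ with $q(L_0,L_0')\ne0$ and set $W:=(L_0\oplus L_0')^\perp$. Then
\[
\mathfrak g/\mathfrak p\simeq T_{[L_0]}Q^n\simeq \Hom(L_0,L_0^\perp/L_0).
\]
The form $q$ induces a nondegenerate symmetric form on $L_0^\perp/L_0$. For $\phi,\psi\in\Hom(L_0,L_0^\perp/L_0)$, define
\[
b(\phi,\psi):=q(\phi(\cdot),\psi(\cdot))\in (L_0^\vee)^{\otimes2}.
\]
This $b$ is a $P$-equivariant, nondegenerate, symmetric element of
\[
\Sym^2(\mathfrak g/\mathfrak p)^\vee\otimes (L_0^{-1})^{\otimes2}.
\]
It is $P$-equivariant because $P$ stabilises $L_0$ and $L_0^\perp$ and preserves $q$; the action of $\exp\mathfrak g_1$ on $\mathfrak g/\mathfrak p$ is trivial, so only the $G_0$-equivariance is a real check. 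Applying the associated-bundle functor $\mathcal G\times_P(-)$ gives a section $g$ of
\[
\Sym^2\Omega^1_X(\log\Delta)\otimes L^{-2}=\Sym^2\Omega^1_X(\log\Delta)\otimes N .
\]

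\textbf{Step 3: nondegeneracy and the expected obstacle.} Nondegeneracy is pointwise and follows from that of $b$. The main obstacle is Step 1 at boundary points: one must verify that the logarithmic tangent bundle of $\mathcal G$ sits in an exact sequence
\[
0\to\mathcal V\to T_{\mathcal G}(-\log\mathcal G_\Delta)\to\pi^*T_X(-\log\Delta)\to0,
\]
and that the fundamental fields lie in $T_{\mathcal G}(-\log\mathcal G_\Delta)$ (they are tangent to $\mathcal G_\Delta$). I would do this in a local trivialisation $\mathcal G|_U\simeq U\times P$, where
\[
T_{\mathcal G}(-\log\mathcal G_\Delta)=\mathrm{pr}_1^*T_U(-\log\Delta)\oplus\mathrm{pr}_2^*T_P .
\]
As a sanity check, one can compare with the model $Q^n$, where the construction recovers $g_Q$ with $N=\OO(2)$, since $L=\OO(-1)$.
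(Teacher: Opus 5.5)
Your proposal is correct and follows essentially the same route as the paper: you check the logarithmic tangent sequence on $\mathcal G$ in a local product chart, quotient $\omega_{\log}$ by $\mathfrak p$ using the fundamental fields and $P$-equivariance, and pair via $\overline q(\varphi(\ell),\psi(\ell))$ on $\Hom(L_0,L_0^\perp/L_0)$ with values in $(L_0^\vee)^{\otimes 2}$, which gives the weight $N=L^{-2}$. The only slip is cosmetic: your opening sentence announces the Killing form, but the form you actually use, correctly, is the one induced by $q$ on $L_0^\perp/L_0$.
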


\begin{proof}
\smallskip
\noindent\emph{Step 1: the logarithmic tangent exact sequence.}
Let $x\in X$. Since $\pi:\mathcal G\to X$ is a holomorphic principal $P$-bundle, it is locally analytically trivial near $x$. Thus, after shrinking around $x$, write $(\mathcal G,\mathcal G_\Delta)\simeq (U\times P,(\Delta\cap U)\times P).$
Choose coordinates $(x_1,\dots,x_n)$ on $U$ such that
$
\Delta\cap U=\{x_1\cdots x_r=0\}
$
for some $0\le r\le n$, and coordinates $(y_1,\dots,y_m)$ on $P$. By the standard local description of logarithmic vector fields for an snc divisor \cite[Chapter~IV, \S1.2]{Ogus18}, together with the principal-bundle/logarithmic-Atiyah background discussed in \cite[\S2]{BiswasDumitrescuMcKay20}, the sheaf $T_{U\times P}(-\log((\Delta\cap U)\times P))$ is freely generated by
$$
x_1\frac{\partial}{\partial x_1},\dots,x_r\frac{\partial}{\partial x_r},
\frac{\partial}{\partial x_{r+1}},\dots,\frac{\partial}{\partial x_n},
\frac{\partial}{\partial y_1},\dots,\frac{\partial}{\partial y_m}.
$$
The first $n$ generators project under $d\pi$ onto the standard local generators of $T_X(-\log \Delta)$, while the last $m$ generators span the relative tangent bundle $T_{\mathcal G/X}$ and are killed by $d\pi$. Hence the local coordinate description shows that $d\pi$ restricts to a surjective morphism
\begin{equation}\label{eq:local-log-tangent-sequence}
0\longrightarrow T_{\mathcal G/X}\longrightarrow T_{\mathcal G}(-\log \mathcal G_\Delta)\xrightarrow{\,d\pi\,} \pi^*T_X(-\log \Delta)\longrightarrow 0,
\end{equation}
whose kernel is precisely $T_{\mathcal G/X}$. Since exactness is local on the base, this yields the claimed global exact sequence of locally free sheaves.

\smallskip
\noindent\emph{Step 2: quotienting the Cartan form by $\mathfrak p$.}
At every point $u\in \mathcal G$, the logarithmic Cartan form gives an isomorphism
 $
\omega_{\log,u}:T_{\mathcal G,u}(-\log \mathcal G_\Delta)\xrightarrow{\sim}\mathfrak g.
$
By \eqref{eq:fundamental-vector-cartan}, the subspace $T_{\mathcal G/X,u}$ is mapped isomorphically onto $\mathfrak p$. Hence the composition with the quotient map $\mathfrak g\to \mathfrak g/\mathfrak p$ vanishes precisely on $T_{\mathcal G/X}$. Using \eqref{eq:local-log-tangent-sequence}, it descends to an isomorphism $\pi^*T_X(-\log \Delta)\xrightarrow{\sim}\mathcal G\times (\mathfrak g/\mathfrak p).$
The $P$-equivariance of $\omega_{\log}$ implies that this descended map is $P$-equivariant. Passing to associated bundles yields \eqref{eq:tangent-associated-bundle}.

\smallskip
\noindent\emph{Step 3: the quadratic form on $\mathfrak g/\mathfrak p$.}
The tangent space to the quadric $Q^n=G/P$ at the base point $[L_0]$ identifies with $T_{[L_0]}Q^n\simeq \operatorname{Hom}(L_0,L_0^{\perp}/L_0),$
and the isotropy representation of $P$ on $\mathfrak g/\mathfrak p$ is the natural one on this tangent space, \cite[\S4.1--\S4.2]{CapSchichl00}, \cite[Chapter~4]{CapSlovak09}. Since $L_0$ is isotropic, the ambient quadratic form $q$ induces a nondegenerate quadratic form $\overline q\in \Sym^2\bigl((L_0^{\perp}/L_0)^\vee\bigr).$
Given $\varphi,\psi\in \operatorname{Hom}(L_0,L_0^{\perp}/L_0)$ and $\ell\in L_0$, we define
$
B(\varphi,\psi)(\ell\otimes \ell):=\overline q\bigl(\varphi(\ell),\psi(\ell)\bigr).
$
This yields a $P$-equivariant, fiberwise nondegenerate symmetric bilinear map
\begin{equation}\label{eq:bilinear-pairing-gp}
B:\Sym^2(\mathfrak g/\mathfrak p)\longrightarrow (L_0^\vee)^{\otimes 2}.
\end{equation}
By construction, the associated weight line is precisely $N=L^{-2}$.

\smallskip
\noindent\emph{Step 4: passage to the associated bundle on $X$.}
Using \eqref{eq:tangent-associated-bundle}, the pairing \eqref{eq:bilinear-pairing-gp} descends to a global section
$$
g\in H^0\bigl(X,\Sym^2(T_X(-\log \Delta)^\vee)\otimes N\bigr)
=H^0\bigl(X,\Sym^2\Omega_X^1(\log \Delta)\otimes N\bigr).
$$
Its fiberwise nondegeneracy follows from the nondegeneracy of $\overline q$, so it defines the required logarithmic conformal tensor.
\end{proof}

\begin{remark}\label{rem:cartan-not-converse}
Proposition~\ref{prop:cartan-implies-tensor} proves the implication $\text{logarithmic conformal Cartan geometry} \Longrightarrow \text{log-conformal tensor}.$
Conversely, a general tensor
 $
g\in H^0\bigl(X,\Sym^2\Omega_X^1(\log \Delta)\otimes N\bigr)
 $
does not by itself supply the additional data of a logarithmic Cartan connection. The explicit cases of the classification nevertheless admit natural Cartan-geometric descriptions.
\end{remark}

\begin{remark}\label{rem:three-branches-cartan}
The three cases of the non-nef classification admit the following Cartan-geometric interpretations.

\smallskip
\noindent\emph{\textup{(i)} The homogeneous flat case.} When $\Delta=\varnothing$ and $X\simeq Q^n$, this is the ordinary flat conformal Cartan geometry on the homogeneous space $G/P$ of \eqref{eq:quadric-homogeneous-space}, Proposition~\ref{prop:quadric-example}, Proposition~\ref{prop:quadric-sharp}. Take $\mathcal G=G\to G/P$ and $\omega_{\log}=\omega_{\mathrm{MC}}$, the Maurer--Cartan form. Proposition~\ref{prop:cartan-implies-tensor} then yields exactly the quadric tensor described explicitly in Section~\ref{sec:setup}.

\smallskip
\noindent\emph{\textup{(ii)} The affine-logarithmic projective case.} Proposition~\ref{prop:projective-example} gives the logarithmic tensor on $(\PP^n,H)$, together with the global logarithmic coframe \eqref{eq:omegai-global}, the local boundary expression \eqref{eq:gH-local}, and the tangent-theoretic interpretation recorded in Remark~\ref{rem:projective-tangent-frame}. On the affine chart $\PP^n\setminus H\simeq \A^n$, Remark~\ref{rem:projective-local} shows that this tensor is simply the constant flat quadratic form. Thus $(\PP^n,H)$ is the logarithmic compactification of the flat affine chart in an exact Weyl gauge, rather than a second compact homogeneous model; compare also the affine-logarithmic discussion in \cite[\S4]{BiswasDumitrescuMcKay20}.

\smallskip
\noindent\emph{\textup{(iii)} The split-null case.} The third alternative in Theorem~\ref{thm:main} is the even-dimensional case described geometrically in Theorem~\ref{thm:third-branch}. Proposition~\ref{prop:abelian-projective-example} gives its $s=1$ model and Proposition~\ref{prop:multi-hyperplane-example} gives the multi-hyperplane models. In the $s=1$ subcase, to which the following Cartan-geometric description applies verbatim, the relevant decomposition is of Witt type: $T_X(-\log \Delta)=A\oplus B, \qquad \rk A=\rk B=m, \qquad g|_A=0, \qquad g|_B=0.$
The mixed part of $g$ then induces a perfect pairing
\begin{equation}\label{eq:split-null-perfect-pairing-final}
A\otimes B\longrightarrow N,
\end{equation}
or, that is, an isomorphism
$$
A\xrightarrow{\sim} B^\vee\otimes N
\
\text{or, symmetrically,}
\
B\xrightarrow{\sim} A^\vee\otimes N.
$$
The splitting is not orthogonal: each summand is isotropic, whereas the mixed pairing between the two summands is perfect. Fiberwise, if $u,u'\in A_x$ and $v,v'\in B_x$, then
 $
g_x(u,u')=0$,
 $g_x(v,v')=0$,
$g_x(u,v)$ may be nonzero,
and the induced map $A_x\to B_x^\vee\otimes N_x$ is an isomorphism. Accordingly, in a local frame adapted to the splitting, the matrix of $g$ has the block form
$$
\begin{pmatrix}
0 & M\\
M^t & 0
\end{pmatrix},
$$
with $M$ invertible. After choosing a local trivialization of $N$ and local frames of $A$ and $B$ which are dual with respect to the pairing \eqref{eq:split-null-perfect-pairing-final}, the matrix can be normalised to
$$
\begin{pmatrix}
0 & I_m\\
I_m & 0
\end{pmatrix}.
$$
Thus the nondegeneracy of the tensor arises precisely from the fact that the two isotropic summands are mutually dual. The same feature occurs in the $s=1$ model of Proposition~\ref{prop:abelian-projective-example}; the fibrational interpretation is recorded in Remark~\ref{rem:fibrational-model}. Thus the $s=1$ split-null geometry is an internal reduction of the same conformal structure in even dimension, from $CO(2m,\C)$ to the subgroup preserving an ordered Witt decomposition. For $s>1$ the linewise Witt splitting remains the same, while the relative logarithmic tangent bundle contains additional trivial directions coming from the hyperplane arrangement; the global structure is described by Theorem~\ref{thm:third-branch}.
\end{remark}

\section{Boundary contractions and compact singular models}\label{subsec:boundary-contractions}

The dimension restriction $n\ge3$ from the classification theorem is not imposed here; the final examples are surfaces. The discussion is restricted to birational contractions for which the boundary is contracted to codimension at least two. No divisorial boundary then survives on the target, and the conformal structure is naturally reflexive. If $X$ is a normal variety, we write $T_X$ for its reflexive tangent sheaf. That is, if
$
j:X_{\mathrm{reg}}\hookrightarrow X
$
denotes the inclusion of the smooth locus, then
$
T_X\simeq j_*T_{X_{\mathrm{reg}}}.
$

\begin{proposition}\label{prop:boundary-contraction-codim-two}
Let $(\widetilde{X},\Delta)$ be a smooth projective pair with simple normal crossings boundary, endowed with a logarithmic conformal structure
\begin{equation}\label{eq:boundary-contraction-upstairs-sharp}
g^\sharp:T_{\widetilde{X}}(-\log \Delta)\xrightarrow{\sim}\Omega_{\widetilde{X}}^1(\log \Delta)\otimes N.
\end{equation}
Let
$
\mu:\widetilde{X}\to Y
$
be a birational contraction onto a normal projective variety. Assume that $\mu$ restricts to an isomorphism
$
\widetilde{X}\setminus \Delta \xrightarrow{\sim} Y\setminus \mu(\Delta)
$
and that
$
\operatorname{codim}_Y \mu(\Delta)\geq 2.
$
If the conformal tensor induced by \eqref{eq:boundary-contraction-upstairs-sharp} on the open set
$
U:=\widetilde{X}\setminus \Delta \simeq Y\setminus \mu(\Delta)
$
descends to $Y$, then the descended structure is necessarily reflexive. More precisely, with $j:U\hookrightarrow Y$ and
$\mathcal L:=j_*(N|_U)$, a rank-one reflexive sheaf, there is an isomorphism $q_Y^\sharp:T_Y\xrightarrow{\sim}\bigl(\Omega_Y^{[1]}\otimes\mathcal L\bigr)^{**}.$
\end{proposition}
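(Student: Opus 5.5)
The plan is to reduce everything to Hartogs-type extension across the closed subset $Z:=\mu(\Delta)$, which has codimension at least two. On $U\simeq Y\setminus Z$ the boundary is invisible: $T_{\widetilde X}(-\log\Delta)|_U=T_U$ and $\Omega^1_{\widetilde X}(\log\Delta)|_U=\Omega^1_U$. So restricting \eqref{eq:boundary-contraction-upstairs-sharp} to $U$ gives an honest isomorphism
\[
g_U^\sharp:T_U\xrightarrow{\sim}\Omega^1_U\otimes N|_U .
\]
The work is to identify the pushforward $j_*$ of both sides with the reflexive sheaves named in the statement, and to check that $j_*g_U^\sharp$ is still an isomorphism.

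\emph{Step 1 (reflexive sheaves are determined off codimension two).} I would recall the standard fact that on a normal variety $Y$ a reflexive sheaf $\mathcal F$ satisfies $\mathcal F\simeq j'_*(\mathcal F|_{Y'})$ for any open $j':Y'\hookrightarrow Y$ whose complement has codimension at least two. Since $Y$ is normal, $\operatorname{codim}Y_{\rm sing}\ge2$, so $W:=U\cap Y_{\rm reg}$ still has complement of codimension at least two in $Y$. Applying the fact gives:
\begin{itemize}
\item $T_Y=(j_W)_*T_W=j_*T_U$, where the second equality holds because $T_U$ is locally free on $U$, hence reflexive, hence extends uniquely from $W$;
\item $\mathcal L=j_*(N|_U)$ is rank one and reflexive, as the pushforward of a line bundle across codimension two;
\item $\bigl(\Omega^{[1]}_Y\otimes\mathcal L\bigr)^{**}=j_*\bigl(\Omega^1_U\otimes N|_U\bigr)$, because both sides are reflexive and agree on $U$, where $\Omega^{[1]}_Y\otimes\mathcal L$ is already locally free.
\end{itemize}

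\emph{Step 2 (extend the map).} Apply $j_*$ to $g_U^\sharp$ and to its inverse. Since $j_*$ is a functor, $j_*(g_U^\sharp)$ and $j_*\bigl((g_U^\sharp)^{-1}\bigr)$ are mutually inverse morphisms between the two reflexive sheaves identified in Step 1. Define
\[
q_Y^\sharp:=j_*(g_U^\sharp):T_Y\xrightarrow{\sim}\bigl(\Omega_Y^{[1]}\otimes\mathcal L\bigr)^{**}.
\]
This already proves the stated isomorphism.

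\emph{Step 3 (``necessarily'').} Suppose the tensor descends to some structure $q$ on $Y$, valued in a sheaf that agrees with $N|_U$ on $U$. Then $q$ agrees with $g_U$ on $U$. Reflexivity forces the extension across $Z$ to be unique, so the descended structure coincides with $q_Y^\sharp$; this is what ``necessarily reflexive'' means. In the same way the symmetric tensor itself extends to a section of $\bigl(\Sym^2\Omega^{[1]}_Y\otimes\mathcal L\bigr)^{**}=j_*\bigl(\Sym^2\Omega^1_U\otimes N|_U\bigr)$, by normality and Hartogs.

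The main obstacle is Step 1: justifying $j_*$-extension across $\mu(\Delta)$ together with $Y_{\rm sing}$. Concretely, one must verify that the complement of $W$ has codimension at least two, and that reflexive sheaves on a normal variety are normal in Barth's sense, i.e.\ sections extend across closed subsets of codimension at least two. I would cite this standard property of reflexive sheaves (Hartshorne, \emph{Stable reflexive sheaves}, Prop.~1.6) rather than reprove it.
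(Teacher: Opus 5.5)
Your proposal is correct and follows essentially the paper's argument: you restrict $g^\sharp$ to $U$, identify $T_Y$, $\mathcal L$ and $\bigl(\Omega_Y^{[1]}\otimes\mathcal L\bigr)^{**}$ with pushforwards from the big open set $U$ using normality of reflexive sheaves, and extend the isomorphism. You do this by applying $j_*$ to the map and its inverse, where the paper uses uniqueness of extensions of morphisms between reflexive sheaves. Your ``main obstacle'' is in fact vacuous: $U\simeq\widetilde X\setminus\Delta$ is smooth, so $U\subset Y_{\rm reg}$ and $W=U$.
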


\begin{proof}
Over $U$, the logarithmic conformal structure is an ordinary conformal structure. Hence \eqref{eq:boundary-contraction-upstairs-sharp} restricts to an isomorphism
$
g^\sharp|_U:T_U\xrightarrow{\sim}\Omega_U^1\otimes N|_U.
$
Since $Y$ is normal and $Y\setminus U$ has codimension at least two, $\mathcal L=j_*(N|_U)$ is rank-one reflexive, and morphisms between reflexive sheaves are determined on $U$. Hence the isomorphism on $U$ extends uniquely to $q_Y^\sharp:T_Y\to\bigl(\Omega_Y^{[1]}\otimes\mathcal L\bigr)^{**}.$
It is an isomorphism on the big open set $U$, hence an isomorphism of reflexive sheaves on $Y$.
\end{proof}

\begin{remark}\label{rem:compact-model-branch-three}
Proposition~\ref{prop:abelian-projective-example} gives the basic smooth compact model corresponding to case~\textup{(iii)} of Theorem~\ref{thm:main}. Indeed, if $A$ is an abelian variety of dimension $r$ and $H\subset \PP^r$ is a hyperplane, then
$
(\widetilde X,\Delta)=\bigl(A\times \PP^r,\;A\times H\bigr)
$
is a smooth projective simple normal crossing pair of dimension $2r$ endowed with an everywhere nondegenerate logarithmic conformal tensor, and the natural projection to $A$ has general fiber $\PP^r$ with hyperplane boundary. Thus this pair furnishes the basic smooth compact model for the third case of the classification theorem.
\end{remark}

\begin{example}\label{rem:generalized-cones-branch-three}
Let $Z$ be a smooth projective variety of dimension $r$, let $\mathcal{M}$ be a very ample line bundle on $Z$, and consider the projective bundle
$
Y:=\mathbb{P}_Z\bigl(\mathcal{M}\oplus \mathcal{O}_Z^{\oplus r}\bigr)
$
with projection
$
\pi:Y\to Z.
$
Let
$
E:=\mathbb{P}_Z\bigl(\mathcal{O}_Z^{\oplus r}\bigr)\subset Y.
$
Then every fiber of $\pi$ is isomorphic to $\PP^r$, and the restriction of $E$ to each fiber is a hyperplane. Thus the pair $(Y,E)$ realizes the fiberwise geometry of case~\textup{(iii)}.

The globally generated line bundle $\OO_Y(1)$ induces a birational contraction
$
e:Y\to X,
$
where $X$ is the corresponding generalized cone, and $e$ contracts $E$ onto a copy of $\PP^{r-1}$. If $\dim Z=r$, then $\dim Y=2r$ and $\operatorname{codim}_X e(E)=r+1\geq 2.$
Hence the boundary disappears completely downstairs. Whenever $(Y,E)$ carries a logarithmic conformal structure compatible with the fibration, Proposition~\ref{prop:boundary-contraction-codim-two} yields a reflexive conformal structure on the singular cone, $q_X^\sharp:T_X\xrightarrow{\sim}\bigl(\Omega_X^{[1]}\otimes\mathcal L\bigr)^{**}.$
Compatibility is restrictive. Twisting the defining bundle by $\mathcal M^{-1}$ gives
$Y\simeq\PP_Z(\OO_Z\oplus\mathcal M^{-1\,\oplus r})$; comparison with the $s=1$ exact sequence in Theorem~\ref{thm:main} forces
$T_Z\simeq(\mathcal M^{-1}\otimes L)^{\oplus r}$ for a line bundle $L$ on $Z$. Thus the construction is conditional. In the natural case $\mathcal M^{-1}\otimes L\simeq\OO_Z$, $T_Z\simeq\OO_Z^{\oplus r}$, so the smooth projective variety $Z$ is an abelian variety; this recovers the basic split-null model of Proposition~\ref{prop:abelian-projective-example}.
\end{example}

\begin{example}\label{ex:surface-quadratic-cover}
Let
$
L_x:=\{x=0\},\,
L_y:=\{y=0\}\subset \PP^2,
$
and set
$
\Delta_2:=L_x+L_y.
$
Then $\Delta_2$ is a reduced simple normal crossing divisor of degree $2$, and
$
K_{\PP^2}+\Delta_2\sim -H,
$
so the pair $(\PP^2,\Delta_2)$ is log non-nef.
Since $\OO_{\PP^2}(\Delta_2)\simeq \OO_{\PP^2}(2)$ is $2$-divisible, there is a natural double cover
$
f_2:S_2\to \PP^2
$
ramified along $\Delta_2$, given in homogeneous coordinates by
$
S_2:=\{w^2=xy\}\subset \PP^3,
$
$
f_2([x:y:z:w])=[x:y:z].
$
The surface $S_2$ is the quadric cone. Its unique singular point is
$
p=[0:0:1:0].
$
Indeed, on the affine chart $z\neq 0$, with coordinates
$
u=x/z,\,
v=y/z,\,
t=w/z,
$
the equation becomes
$
t^2=uv,
$
which is the ordinary double point of type $A_1$.

$$
\begin{tikzcd}[column sep=large,row sep=large]
S_2 \arrow[r,"f_2"] & \PP^2 \\
R_2 \arrow[u,hook] \arrow[r] & \Delta_2 \arrow[u,hook]
\end{tikzcd}
$$

Let
$
\nu_2:\widetilde S_2\to S_2
$
be the minimal resolution. Then
$
\widetilde S_2\simeq \PP_{\PP^1}\bigl(\OO_{\PP^1}\oplus \OO_{\PP^1}(2)\bigr)\simeq \mathbf F_2.
$
Let
$
C_0\subset \widetilde S_2
$
be the negative section, so
$
C_0^2=-2.
$
The reduced ramification divisor on $S_2$ is
$
R_2=R_x+R_y,
$
where
$
R_x=\{x=w=0\},\,
R_y=\{y=w=0\}.
$
Let
$
F_x,\,
F_y\subset \widetilde S_2
$
be their strict transforms. Then $F_x$ and $F_y$ are fibers of the ruling
$
\widetilde S_2\to \PP^1.
$
Set
$
B_2:=C_0+F_x+F_y.
$
This is a reduced simple normal crossing divisor.
The birational picture is therefore more naturally expressed at the level of pairs:
$$
\begin{tikzcd}[column sep=large,row sep=large]
(\widetilde S_2,B_2) \arrow[r,"\nu_2"] \arrow[dr,bend right=15,"\pi"'] & (S_2,R_2) \arrow[d,"f_2"] \\
& (\PP^2,\Delta_2)
\end{tikzcd}
$$
where $\pi:=f_2\circ \nu_2$, the reduced ramification divisor on $S_2$ is
$
R_2=R_x+R_y,
$
and the natural boundary upstairs is
$
B_2=C_0+F_x+F_y.
$
Moreover $C_0$ is exactly the distinguished section contracted by $\nu_2$, while $F_x$ and $F_y$ map birationally onto $R_x$ and $R_y$.

The logarithmic cotangent bundle of $(\widetilde S_2,B_2)$ is computed as follows. Let
$
F
$
denote the numerical class of a fiber, and let
$
C_\infty:=C_0+2F
$
be the positive section. The full toric boundary of $\mathbf F_2$ is
$
B_{\mathrm{tor}}=C_0+C_\infty+F_x+F_y.
$
Since $(\mathbf F_2,B_{\mathrm{tor}})$ is a smooth complete toric pair, its logarithmic cotangent bundle is trivial:
$
\Omega_{\mathbf F_2}^1(\log B_{\mathrm{tor}})\simeq \OO_{\mathbf F_2}^{\oplus 2}.
$
Removing the component $C_\infty$ by the residue sequence gives
\begin{equation}\label{eq:F2-residue}
0\longrightarrow \Omega_{\mathbf F_2}^1(\log B_2)
\longrightarrow \OO_{\mathbf F_2}^{\oplus 2}
\longrightarrow \OO_{C_\infty}
\longrightarrow 0.
\end{equation}
Any nonzero morphism $\OO_{\mathbf F_2}^{\oplus 2}\to \OO_{C_\infty}$ is, after a linear change of basis in $\OO_{\mathbf F_2}^{\oplus 2}$, the projection onto one factor followed by the canonical quotient $\OO_{\mathbf F_2}\to \OO_{C_\infty}$. Hence the kernel in \eqref{eq:F2-residue} is
$
\OO_{\mathbf F_2}\oplus \OO_{\mathbf F_2}(-C_\infty).
$
Therefore
\begin{equation}\label{eq:F2-log-cotangent}
\Omega_{\mathbf F_2}^1(\log B_2)\simeq
\OO_{\mathbf F_2}\oplus \OO_{\mathbf F_2}(-C_\infty),
\qquad
T_{\mathbf F_2}(-\log B_2)\simeq
\OO_{\mathbf F_2}\oplus \OO_{\mathbf F_2}(C_\infty).
\end{equation}

Next,
$
K_{\mathbf F_2}\sim -2C_0-4F
$
and
$
B_2\sim C_0+2F=C_\infty,
$
hence $K_{\mathbf F_2}+B_2\sim -C_\infty.$
In particular, $K_{\mathbf F_2}+B_2$ is not nef. If we set
$
N_2:=\OO_{\mathbf F_2}(C_\infty),
$
then \eqref{eq:F2-log-cotangent} gives
$
\Omega_{\mathbf F_2}^1(\log B_2)\otimes N_2
\simeq
\OO_{\mathbf F_2}(C_\infty)\oplus \OO_{\mathbf F_2},
$
which differs from $T_{\mathbf F_2}(-\log B_2)$ only by permuting the two direct summands. The mixed term therefore defines a fiberwise nondegenerate symmetric bilinear form of Witt type, namely a section
$$
g_2\in H^0\bigl(\mathbf F_2,\Sym^2\Omega_{\mathbf F_2}^1(\log B_2)\otimes \OO_{\mathbf F_2}(C_\infty)\bigr),
$$
whose associated map is an isomorphism
$$
g_2^\sharp:
T_{\mathbf F_2}(-\log B_2)\xrightarrow{\sim}
\Omega_{\mathbf F_2}^1(\log B_2)\otimes \OO_{\mathbf F_2}(C_\infty).
$$

Thus $(\widetilde S_2,B_2)$ is an explicit log-conformal surface in the log non-nef regime. Moreover, this example is the surface case of the generalized cone construction discussed in Example~\ref{rem:generalized-cones-branch-three}: indeed
$
\widetilde S_2=\PP_{\PP^1}(\OO_{\PP^1}\oplus \OO_{\PP^1}(2))
$
and contracting the distinguished section $C_0$ yields the quadric cone $S_2$.
\end{example}

\begin{example}\label{ex:surface-cubic-cover}
Let
$
L_x:=\{x=0\},\,
L_y:=\{y=0\},\,
L_z:=\{z=0\}\subset \PP^2,
$
and set
$
\Delta_3:=L_x+L_y+L_z.
$
Then $\Delta_3$ is the toric boundary of $\PP^2$, and
$
K_{\PP^2}+\Delta_3\sim 0.
$
Thus $(\PP^2,\Delta_3)$ lies in the log-trivial regime.
Since $\OO_{\PP^2}(\Delta_3)\simeq \OO_{\PP^2}(3)$ is $3$-divisible, there is a natural cyclic triple cover
$
f_3:S_3\to \PP^2
$
ramified along $\Delta_3$, given by
$
S_3:=\{w^3=xyz\}\subset \PP^3,
$
$
f_3([x:y:z:w])=[x:y:z].
$
The surface $S_3$ is a cubic surface with exactly three singular points,
$
[1:0:0:0],\,
[0:1:0:0],\,
[0:0:1:0].
$
For instance, on the affine chart $x\neq 0$, with coordinates
$
u=y/x,\,
v=z/x,\,
t=w/x,
$
the equation becomes
$
t^3=uv,
$
which is the rational double point of type $A_2$. The same computation applies cyclically at the other two singular points.

$$
\begin{tikzcd}[column sep=large,row sep=large]
S_3 \arrow[r,"f_3"] & \PP^2 \\
R_3 \arrow[u,hook] \arrow[r] & \Delta_3 \arrow[u,hook]
\end{tikzcd}
$$

Let
$
\nu_3:\widetilde S_3\to S_3
$
be the minimal resolution. Since the singularities are of type $A_2$, the exceptional divisor over each singular point is a chain of two $(-2)$-curves. Hence the total exceptional divisor consists of six smooth rational curves.
The reduced ramification divisor on $S_3$ is
$
R_3=R_x+R_y+R_z,
$
where
$
R_x=\{x=w=0\},\,
R_y=\{y=w=0\},\,
R_z=\{z=w=0\}.
$
Let
$
\widetilde R_x,\,
\widetilde R_y,\,
\widetilde R_z
$
be the strict transforms of these three curves in $\widetilde S_3$, and let
$
E_{\mathrm{exc}}
$
be the exceptional divisor of $\nu_3$. Set
$
B_3:=\widetilde R_x+\widetilde R_y+\widetilde R_z+E_{\mathrm{exc}}.
$
The birational picture is therefore more naturally expressed at the level of pairs:
$$
\begin{tikzcd}[column sep=large,row sep=large]
(\widetilde S_3,B_3) \arrow[r,"\nu_3"] \arrow[dr,bend right=15,"\pi"'] & (S_3,R_3) \arrow[d,"f_3"] \\
& (\PP^2,\Delta_3)
\end{tikzcd}
$$
where $\pi:=f_3\circ \nu_3$. Here $R_3$ is the reduced ramification divisor on $S_3$, while $B_3$ is obtained by adding to the strict transforms $\widetilde R_x,\widetilde R_y,\widetilde R_z$ the full exceptional divisor of the minimal resolution.

$B_3$ is the full toric boundary of $\widetilde S_3$. Indeed, the surface $S_3$ is toric: the open subset where $xyz\neq 0$ is a two-dimensional torus, and the divisors $R_x,R_y,R_z$ are exactly the three torus-invariant boundary components on the singular toric surface $S_3$. The minimal resolution $\nu_3$ is the toric minimal resolution obtained by subdividing each singular cone of type $A_2$ by two new rays. Hence the six exceptional curves are also torus-invariant, and the complete toric boundary on $\widetilde S_3$ is precisely
$
B_3.
$
Since $(\widetilde S_3,B_3)$ is a smooth complete toric pair, its logarithmic tangent and cotangent bundles are trivial:
$$
\Omega_{\widetilde S_3}^1(\log B_3)\simeq \OO_{\widetilde S_3}^{\oplus 2},
\qquad
T_{\widetilde S_3}(-\log B_3)\simeq \OO_{\widetilde S_3}^{\oplus 2}.
$$
Choosing any nondegenerate symmetric bilinear form on $\C^2$ yields a section
$
g_3\in H^0\bigl(\widetilde S_3,\Sym^2\Omega_{\widetilde S_3}^1(\log B_3)\bigr)
$
whose associated map is an isomorphism
$
g_3^\sharp:
T_{\widetilde S_3}(-\log B_3)\xrightarrow{\sim}
\Omega_{\widetilde S_3}^1(\log B_3).
$
Finally, the toric boundary identity gives
 $
K_{\widetilde S_3}+B_3\sim 0.
$
Thus $(\widetilde S_3,B_3)$ is an explicit log-conformal surface in the log-trivial regime.
Unlike Example~\ref{ex:surface-quadratic-cover}, this cubic example is not a generalized cone in the sense of Example~\ref{rem:generalized-cones-branch-three}. The distinction is geometric: in the surface case, a generalized cone is obtained by contracting a single distinguished section of a ruled surface and therefore has a unique contraction centre, whereas the cubic surface $S_3$ has three singular points of type $A_2$.
\end{example}

\bibliographystyle{amsalpha}
\bibliography{Biblio}

\end{document}